\title{Regularity in the local CR embedding problem}
\author{Xianghong Gong\footnote{Partially supported by NSF grant
 DMS-0705426.}\ \,  and S. M. Webster}
\newcommand{\gra}{\alpha}
\newcommand{\grb}{\beta}
\newcommand{\grd}{\delta}          
\newcommand{\gre}{\epsilon}
\newcommand{\grg}{\gamma}          
\newcommand{\grk}{\kappa}
\newcommand{\grl}{\lambda}         
\newcommand{\grm}{\mu}
\newcommand{\grr}{\rho}
\newcommand{\grs}{\sigma}          \newcommand{\grS}{\Sigma}
\newcommand{\grt}{\tau}
\newcommand{\grx}{\xi}             
\newcommand{\gry}{\eta}
\newcommand{\grph}{\phi}           \newcommand{\grPh}{\Phi}
\newcommand{\grch}{\chi}
\newcommand{\grps}{\psi}
\newcommand{\la}{\langle}
\newcommand{\ra}{\rangle}
\newcommand{\del}{\partial}
                           \newcommand{\bC}{\mathbf{C}}
\newcommand{\beq}{\begin{equation}}
\newcommand{\eeq}{\end{equation}}
\newtheorem{theorem}{Theorem}[section]
\newtheorem{lemma}[theorem]{Lemma}
\newtheorem{prop}[theorem]{Proposition}
\newcommand{\barr}{\overline}
\begin{document}

\maketitle

\noindent\textbf{Abstract}.  We consider a formally integrable, strictly
pseudoconvex CR manifold $M$ of hypersurface type, of dimension
$2n-1\geq7$.  Local CR, i.e. holomorphic, embeddings of $M$ are known
to exist from the works of Kuranishi and Akahori.  We address the
problem of regularity of the embedding in standard H\"older spaces $C^{a}(M)$,
$a\in\mathbf{R}$. If the structure of $M$ is of class $C^{m}$,
$m\in\mathbf{Z}$, $4\leq m\leq\infty$, we construct a local CR embedding
near each point of $M$.  This embedding is of class $C^{a}$, for every
$a$, $0\leq a < m+(1/2)$.  Our method is based on Henkin's local
homotopy formula for the embedded case, some very precise estimates
for the solution operators in it, and a substantial modification of a
previous Nash-Moser argument due to the second author.

\noindent\textbf{Key Words}. Tangential Cauchy-Riemann Equations,
CR embedding, Nash-Moser methods.

\noindent\textbf{MSC Classification}. Primary 32V30; Secondary 35N10.

\tableofcontents


\vspace{3ex}


\setcounter{section}{0}
\setcounter{equation}{0}

\noindent\textbf{INTRODUCTION}.  In this paper we prove the following
\textbf{theorem:}  Let $M$ be a formally integrable, strongly
pseudoconvex CR manifold of differentiability class $C^{m}$,
$m\in\mathbf{Z}$, and dimension $2n-1\geq 7$.  Then, near each point of
$M$, there exists a local CR embedding $Z$ into $\mathbf{C}^{n}$.
This embedding $Z$ is of H\"older class $C^{a}$, for every $a$,
$0\leq a < m+1/2$.

We state the result in a more precise form below.  Locally, we take
$U\subset \mathbf{R}^{2n-1}$ to be a neighborhood of the origin.  The
CR structure is given by $n-1$ complex vector fields, $X_{\gra}$,
$1\leq \gra \leq n-1$ on $U$, which together with their complex conjugates
$X_{\barr{\gra}}$ are pointwise independent over $\mathbf{C}$.  The
Lie brackets $[X_{\gra},X_{\grb}]$ are linear combinations of the
$X_{\gra}$'s, which is the integrability condition.  The brackets
$i[X_{\gra},X_{\barr{\grb}}]$, modulo the $X_{\gra},X_{\barr{\gra}}$'s
give the Hermitian Levi form, which is assumed to be positive definite.
The CR embedding $Z=(z^{1},\dots ,z^{n})$, is to be given by $n$
independent local complex functions near $0$, which satisfy the
overdetermined system of first-order, linear partial differential
equations
\begin{equation}
  X_{\barr{\gra}}z^{j}=0, \; \; 1\leq \gra \leq n-1, \; 1\leq j \leq n.
\end{equation}
Our main result is the following.
\begin{theorem} Let the coefficients of the vector-fields $X_{\gra}$
be of class $C^{m}$, $m\in\mathbf{Z}$, $4\leq m \leq\infty$,and $2n-1\geq 7$.
Then there exist $n$ independent solutions $z^{j}$ to the above system,
which embed some neighborhood of $0$ as a strictly pseudoconvex real
hypersurface $M^{2n-1}$ in $\mathbf{C}^{n}$.  These functions $z^{j}$ are of
H\"older class $C^{a}$, for all $a$, $0\leq a < m+1/2$.
\end{theorem}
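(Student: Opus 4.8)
\medskip
\noindent\textbf{Sketch of the intended argument.} The plan is to produce the embedding by a rapidly convergent (Nash--Moser) iteration, beginning from the standard model structure and correcting repeatedly by a local homotopy operator for the tangential Cauchy--Riemann operator $\bar\partial_b$, i.e.\ the operator $z\mapsto(X_{\bar\alpha}z)_\alpha$. First one chooses a $C^m$ change of coordinates and rescales by a small factor, so that on a fixed ball $B\subset\bR^{2n-1}$ the structure becomes a small $C^m$ perturbation of the Heisenberg model: $X_{\bar\alpha}=\bar Z_\alpha+\phi_\alpha^{\,\beta}Z_\beta$, with the $Z_\alpha$ spanning the model $(1,0)$ space and $\|\phi\|_{C^m(B)}$ as small as desired, strict pseudoconvexity being preserved. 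The model coordinate functions $z^0=(z^{0,1},\dots,z^{0,n})$ are CR for the model, so the initial error $g_0:=(X_{\bar\alpha}z^{0,j})=(\phi_\alpha^{\,\beta}Z_\beta z^{0,j})$ has size $O(\|\phi\|_{C^0})$ and lies in $C^m(B)$; the hypothesis $m\ge4$ is what leaves room between the Hölder exponent needed for the kernel estimates and $m$.

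\medskip
\noindent\textbf{The correction step.} Given an approximate embedding $z_k=z^0+u_k$ on a ball $B_k$ with error $G_k:=(X_{\bar\alpha}z_k^j)$, the integrability hypothesis on the $X_\alpha$ forces $G_k$ to be exactly closed for the deformed operator $\bar\partial_b-\phi\!\cdot\!\partial_b$, hence $\bar\partial_b$-closed for the model up to an error of size $O(\|\phi\|\,\|G_k\|_{C^1})$; one first corrects this defect using the model homotopy formula in bidegree $(0,2)$ (available since $n\ge4$), then applies Henkin's local homotopy formula in bidegree $(0,1)$ on a slightly smaller ball $B_{k+1}\subset B_k$ to a smoothed datum, getting $v_k$ with $\bar\partial_b v_k=-S_{t_k}G_k$ up to a remainder, where $S_{t_k}$ is a standard Hölder-space smoothing operator. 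Setting $z_{k+1}=z_k+v_k$, one finds schematically $G_{k+1}=\phi\!\cdot\!\partial_b v_k+(I-S_{t_k})G_k+(\text{closedness and kernel remainders})$, whence in a low norm $\|G_{k+1}\|_{C^{a_0}(B_{k+1})}\lesssim\|\phi\|\,\|G_k\|_{C^{a_0+1/2}(B_k)}+t_k^{-(m-a_0)}\|G_k\|_{C^m(B_k)}+\cdots$. The decisive point, to be proved as a separate estimate, is that the homotopy operator on the only $C^m$-smooth, approximately embedded hypersurfaces $z_k(B_k)$ obeys Hölder estimates gaining exactly $1/2$ derivative, with constants depending only polynomially on the radius loss $(r_k-r_{k+1})^{-1}$ and uniformly over $C^m$-close structures; strict pseudoconvexity and $2n-1\ge7$ enter here.

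\medskip
\noindent\textbf{Iteration and conclusion.} With $t_k$ growing super-exponentially (for instance $t_{k+1}=t_k^{3/2}$) and $r_k\downarrow r_\infty>0$, $r_k-r_{k+1}\sim k^{-2}$, one runs the Nash--Moser bookkeeping: the low norm $\|G_k\|_{C^{a_0}}$ tends to $0$ faster than any power of $t_k^{-1}$, while a high norm $\|z_k\|_{C^b}$ stays bounded for every $b<m+1/2$, using the convexity inequalities for Hölder norms and the tame dependence of all operators on $z_k$. Then $\sum_k v_k$ converges in $C^a(B_{r_\infty})$ for every $a<m+1/2$ to $z=z^0+u$ with $X_{\bar\alpha}z^j=0$; since $z$ is $C^1$-close to $z^0$ it is an immersion with independent components, so $z(B_{r_\infty})$ is a strictly pseudoconvex hypersurface in $\bC^n$ and $z$ is the sought embedding.

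\medskip
\noindent\textbf{Main obstacle.} The hard part is marrying the precise homotopy estimate of the correction step to a closing scheme that reaches the \emph{sharp} exponent: one must show the Henkin operator on the perturbed, finitely smooth hypersurfaces gains exactly $1/2$ derivative with constants that degrade only polynomially under the unavoidable shrinking of balls, and then calibrate the smoothing so that neither the derivative loss hidden in the term $\phi\!\cdot\!\partial_b v_k$ nor the loss of domain erodes more than an arbitrarily small amount of regularity below $m+1/2$. This is precisely where the earlier Nash--Moser argument of the second author must be substantially modified.
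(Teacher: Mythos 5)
Your high-level outline correctly identifies the ingredients (non-isotropic dilation to make the initial error small, Henkin's local homotopy formula, Nash--Moser smoothing, interpolation for the final regularity), but there are three concrete gaps between this sketch and a working proof, and the paper handles each of them in a way your proposal omits.

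\textbf{Missing renormalization.} You iterate $z_{k+1}=z_k+v_k$ on shrinking balls. But the uniform Henkin estimates require the approximate hypersurface at each stage to remain in normalized form (graph over the hyperplane $y^n=0$, defining function $r=-y^n+|z'|^2+h$ with $h=O(|x|^2)$ small in $C^2$, and the adapted frame satisfying $A(0)=B(0)=0$). Adding $v_k$ destroys this. The paper therefore inserts a renormalization step (section~4): subtract the linear Taylor part $E$, project $Z+F+E$ to $y^n=0$ to get a local diffeomorphism $f$, and set $Z_1=(Z+F+E)\circ f^{-1}$. The reparametrizing diffeomorphism $f$ then enters all later estimates, and in section~14 one must prove that the infinite composition $\tilde f_\infty = \lim f_j\circ\cdots\circ f_0$ converges in $C^a$; the final CR embedding is $Z_\infty\circ\tilde f_\infty$, not $Z_\infty$ alone. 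Your scheme has no analogue of this, and without it both the uniform constants in the kernel estimates and the normalization $\bar\partial_X Z_j(0)=0$ fail to propagate.

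\textbf{Parameter choices.} You propose $t_{k+1}=t_k^{3/2}$ and $r_k-r_{k+1}\sim k^{-2}$. In the paper's bookkeeping (section~11) the smoothing exponent must satisfy $1<\kappa<5/4$ with $s=2$: the inequality $(1-\kappa)s+\tfrac12>0$, which makes the dominant term $a_j\sim t_j^{(1-\kappa)s+1/2}$ summable, forces $\kappa<1+\tfrac1{2s}=\tfrac54$. With $\kappa=3/2$ that exponent becomes $-3/2<0$ and the scheme diverges. The paper also shrinks the radius geometrically, $\sigma_{j+1}=\sigma_j/5$, so the $K_j(a)=c_a(\rho_j\sigma_j)^{-s(a)}$ factors grow only geometrically; polynomial shrinking $\sim k^{-2}$ would work as well in principle, but the choice of $\kappa$ is not cosmetic.

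\textbf{The decomposition.} You pre-correct the $\bar\partial_b$-closedness defect of the datum by a separate bidegree-$(0,2)$ homotopy and then smooth the datum before applying $P$. The paper instead applies the homotopy formula $\varphi=\bar\partial_M P\varphi+Q\bar\partial_M\varphi$ directly to $\varphi=\bar\partial_X Z$ (so the closedness defect is absorbed into the $Q$ term), sets $F=-S_tP\bar\partial_X Z$ (smoothing \emph{after} $P$), and then inserts the Friedrichs commutator $[S_t,\bar\partial_M]$ to obtain a four-term decomposition $I_1,\dots,I_4$. The commutator term $I_3$ is precisely what lets the smoothing reach the $Q$ term, which the paper notes was the deficiency of the earlier three-term scheme. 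Your proposal is structurally the three-term decomposition, which loses a bit too much; moreover a genuine $(0,2)$ homotopy formula requires $2<n-2$, i.e.\ $n\geq5$, so it would not cover $2n-1=7$, whereas the $Q$ operator inside the $q=1$ formula only needs $n\geq4$.

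In short: the pieces are the right ones, but the reparametrization step and the precise commutator decomposition are what make the estimates close, and the constraint $\kappa<5/4$ is forced, not free.
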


There are counterexamples to the existence of local CR embeddings in the
case $2n-1=3$, due to Nirenberg [20], while the case $2n-1=5$ is still
unresolved, to our knowledge.  There are also counterexamples in the case
of Levi nondegenerate structures of mixed signature [11].

The lower bound on $m$ can undoubtedly be improved. A reasonable
conjecture would be that the theorem holds with $m\geq2$. In fact,
for many results in this work, we may take the $X_{\gra}$ in the
H\"older class $C^{m}$, $m\in\mathbf{R}$, $3<m\leq\infty$.  Then we can get a
CR-embedding of class $C^{a}$, if $0\leq a<m$.
See proposition 12.1 below.

Perhaps more interestingly, it seems likely that one might achieve
$a=m + 1/2$ for ours, or for some other solution $Z$.
This reminds us of  the situation of the fundamental (1/2)-estimate for the
$\barr{\del}$-problem on strictly pseudoconvex domains (see Range [21]).
Recall briefly that Kerzman [12] and Stein  showed that one
cannot gain more than H\"older 1/2 in this problem, and Kerzman established
a gain of $<1/2$.  After much work, Henkin and Romanov [7], [9] proved the
existence of solutions gaining precisely 1/2 derivative, by kernel methods.

Henkin's construction [8] of solution operators for the local
tangential Cauchy-Riemann equations on a strictly pseudoconvex real
hypersurface $M$, together with the detailed estimates in [6], form
key ingredients in the proof of the above theorem.  Since these operators
do not regain a full derivative, we must introduce a smoothing process.
Thus, the other main ingredient is a Nash-Moser implicit function
theorem [16], [17], [18].

The CR embedding problem now has a long history.  The local result
was first conjectured by Kohn [13], who also founded the analysis
of CR manifolds using Hilbert space methods.  Theorem 0.1 is an
analogue of the Newlander-Nirenberg theorem (see [20]) for
integrable almost-complex structures.  That venerable result now
has many different proofs.  We mention only the one in [22], the
method of which is particularly relevant to the argument given
here.  It motivated the proof in [24] of a much less precise
version of theorem 0.1.  The work of Ma and Michel [15] greatly
reduced the derivative loss in that argument.  We also mention
another simpler but instructive model, the integrability problem
for CR vector bundles.  In [5] we were able to eliminate
completely the previous complicated Nash-Moser techniques and give
a simple and sharp argument based on the KAM method.  To date,
however, all known proofs of local CR embedding involve some kind
of difficult Nash-Moser argument.

In case the CR manifold $M^{2n-1}$ is also compact, and $2n-1\geq5$,
Boutet-de-Monvel has given a linear proof (of local embedding) [2],
based on Kohn's estimates [13].  By far the most important step
toward theorem 0.1 was taken by Kuranishi [14] for $2n-1\geq9$.  This
was extended to $2n-1\geq7$ by Akahori [1].  Catlin has given another
proof of even more general results [3] based on his method of extending the
CR structure to an integrable almost complex structure.  These results
are all carried out in $C^{\infty}$, using $\barr{\del}$-Neumann-type
methods.

The scheme of proof of theorem 0.1 is similar to that developed in [24],
but with some substantial and significant changes.  We start with a smooth
approximate CR (or holomorphic) embedding and modify it to make it more
nearly holomorphic. We iterate this process, generating a
sequence of smooth embeddings, which converge in suitable norm to a
CR embedding with the stated regularity.

In section one we make an initial normalization of the complex vector
fields $X_{\gra}$ at $0$. This produces an approximate embedding onto
a real hyperquadric in $\mathbf{C}^{n}$.  Instead of using higher order
Taylor polynomial arguments, we apply non-isotropic dilation to get
approximate CR embeddings with arbitrarily small error.  This obviates
much of the spurious derivative loss encountered in [24].

In section 2 we recall the tangential CR operator $\barr{\del}_{M}$
and local Henkin homotopy formula, with solution operators $P$, $Q$  for
a normalized strictly pseudoconvex real hypersurface $M\subset\mathbf{C}^{n}$.
By normalized we mean that $M$ is suitably approximated to second
order at the origin by a real hyperquadric.  Then in section 3 we
indicate our basic procedure, $Z\mapsto Z_{*}=Z + F$, to make the embedding
more nearly holomorphic. For this the error term $X_{\barr{\gra}}(Z+F)$ is
decomposed into a sum of 4 terms.  This procedure destroys the normalization
at $0$, so section 4 is needed to give a precise method for renormalization.
This uses the first order Taylor polynomial of $F$. The inverse mapping
theorem is then applied to the map $f$ gotten by projecting
$Z\mapsto Z_{*}$ to the real hyperplane Re$z^{n}=0$. The map $f$
reparametrizes the CR structure, and $Z_{*}\circ f^{-1}$ is the new
approximate CR embedding.

This procedure is rather lengthy, but it is one for which we can make precise
estimates.  These are in terms of standard H\"older norms, which we recall
in section 5.  We make use of elementary properties of these norms
in sections 5 and 6 to get estimates in terms of $F$.  These
estimates are roughly of two types.  The \lq\lq coarse\rq\rq\ estimates
will be used for controlling the growth of higher order derivatives during
iteration.  The \lq\lq fine\rq\rq\ estimates will be used to get rapid
convergence in some lower order H\"older norm.

In section 7 we introduce a standard smoothing operator $S_{t}$ and
give some well known estimates for it.  Moreover, we make a precise estimate
for the Friedrichs commutator $[S_{t},\barr{\del}_{M}]$.  Also in section
7 we quote from [6] the needed estimates for the Henkin solution
operators $P$, $Q$.  In section 8 we give estimates for $F$, and use them
to refine our previous estimates. Section 9 contains the central estimates,
the  4-term estimates for $X_{\barr{\gra}}(Z+F)$.

In sections 10, 11 we establish the main inductive hypotheses and construct the
sequence of approximate CR embeddings.  It is shown to converge
in a low derivative norm  to a CR embedding in section 12, provided
$m>3$. This inequality results from the choice of certain parameters
in the argument, and is probably not optimal. In
section 13 we use the interpolation inequality for H\"older norms, the
rapid convergence in low norm, and some delicate estimates controlling
the possible growth of higher order derivatives, to gain increased
regularity for the limiting real hypersurface $M_{\infty}$ and its
embedding $Z_{\infty}$ into $\mathbf{C}^{n}$.  In section 14 we show
convergence of the infinite composition of the  projected maps $f$ to
a map $\tilde{f}_{\infty}$. The solutions to the system (0.1) are then given
by the components $z^{j}$ of $Z_{\infty}\circ \tilde{f}_{\infty}$.  This
will prove theorem 0.1.


\section{Initial normalization. Non-isotropic dilation.}
\setcounter{equation}{0}

We consider $n-1$ complex vector fields $X_{\gra}$, $1\leq \gra \leq n-1$,
in a neighborhood of $0\in\mathbf{R}^{2n-1}$, of smoothness class $C^{m}$,
$m\in\mathbf{Z}$,  $m\geq2$, which, together with their complex conjugates
$X_{\barr{\gra}}=\barr{X_{\gra}}$, are pointwise linearly independent over
the complex numbers.  Under the integrability and non-degeneracy conditions,
we normalize them at $0$ to get a second order approximate CR embedding onto
a real hyperquadric.  Then we use non-isotropic dilation to make the
error as small as we please in $C^{m}$-norm.

\begin{lemma}
  After a polynomial change of the coordinates $(z^{\gra},x^{n})$ on
$\mathbf{R}^{2n-1}$, we may achieve
\begin{equation}
   X_{\gra} = \del_{\gra} + \tilde{A}_{\gra}^{\;\barr{\grb}}\del_{\barr{\grb}}
    + \tilde{B}_{\gra}\del_{x^{n}},\; \; \;
   \tilde{B}_{\gra} = i\grd_{\gra\barr{\grb}}z^{\barr{\grb}} + B_{\gra}^{*},
\end{equation}
where $\tilde{A}_{\gra}^{\;\barr{\grb}}\equiv B_{\gra}^{*}
  \equiv O(2)\equiv O(|(z',x^{n})|^{2})$, and the coefficients are of
class $C^{m}$.
\end{lemma}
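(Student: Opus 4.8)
\medskip
\noindent\emph{Proof proposal.} The plan is to normalize in three moves, none of which costs derivatives: a real-linear change of coordinates together with a recombination of the frame, a complex-linear change adjusting the Levi form, and a homogeneous quadratic change of coordinates that disposes of the remaining first-order terms. Regularity is preserved throughout because the coordinate changes are polynomial (so their inverses are smooth and composing with them is free) and the recombinations amount to multiplying the frame by invertible $C^m$ matrices.

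\emph{First move.} At the origin the space $V_{0}=\mathrm{span}_{\bC}\{X_{\gra}(0)\}$ satisfies $V_{0}\cap\barr{V_{0}}=0$, so $V_{0}$ determines a complex structure on a real $(2n-2)$-plane in $T_{0}\bR^{2n-1}$ with a one-dimensional real complement. Choosing real-linear coordinates $(z^{\gra},x^{n})$, $z^{\gra}=u^{\gra}+iv^{\gra}$, adapted to this splitting, I arrange $X_{\gra}(0)\in\mathrm{span}\{\del_{\grb}\}$; the matrix of $\del_{\grb}$-components of the $X_{\gra}$ is then $C^{m}$ and invertible near $0$, and replacing each $X_{\gra}$ by the corresponding $C^{m}$ combination of the $X_{\grg}$ brings the fields to
\[
  X_{\gra}=\del_{\gra}+A_{\gra}^{\;\barr{\grb}}\del_{\barr{\grb}}+B_{\gra}\del_{x^{n}},
  \qquad A_{\gra}^{\;\barr{\grb}},\ B_{\gra}\in C^{m},\quad A_{\gra}^{\;\barr{\grb}}(0)=B_{\gra}(0)=0 .
\]
In this shape a bracket $[X_{\gra},X_{\grb}]$ carries no $\del_{\grg}$-component (brackets among the fields $\del_{\gra}$, $g\,\del_{\barr{\grb}}$, $g\,\del_{x^{n}}$ never produce a $\del_{\grg}$-term), whereas $\sum_{\grg}c_{\gra\grb}^{\;\grg}X_{\grg}$ has $\del_{\grg}$-component $c_{\gra\grb}^{\;\grg}$; hence integrability forces $[X_{\gra},X_{\grb}]=0$ identically. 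Differentiating this at $0$ and reading off the $\del_{\barr{\grd}}$- and $\del_{x^{n}}$-components gives the symmetries $a_{\gra\grb}^{\;\barr{\grd}}=a_{\grb\gra}^{\;\barr{\grd}}$ and $b_{\gra\grb}=b_{\grb\gra}$ of the linear parts, where $A_{\gra}^{\;\barr{\grb}}=a_{\gra\grg}^{\;\barr{\grb}}z^{\grg}+a_{\gra\barr{\grg}}^{\;\barr{\grb}}z^{\barr{\grg}}+a_{\gra n}^{\;\barr{\grb}}x^{n}+O(2)$ and $B_{\gra}=b_{\gra\grg}z^{\grg}+b_{\gra\barr{\grg}}z^{\barr{\grg}}+b_{\gra n}x^{n}+O(2)$. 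Evaluating $i[X_{\gra},X_{\barr{\grb}}]$ at $0$ shows that, up to a positive scalar, the Levi form is the positive definite Hermitian form built from $b_{\gra\barr{\grg}}$; a complex-linear change of the $z^{\gra}$ with the attendant recombination of the frame normalizes it, so that the residual first-order term appearing below comes out to be exactly $i\grd_{\gra\barr{\grb}}z^{\barr{\grb}}$.

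\emph{Second move.} Perform $w^{\grb}=z^{\grb}+p^{\grb}$, $w^{n}=x^{n}+q$ with $p^{\grb}$ homogeneous quadratic and $q$ real homogeneous quadratic. A direct push-forward computation shows that, modulo $O(2)$, the new $\del_{\barr{w}^{\grb}}$-component of $X_{\gra}$ equals $[A_{\gra}^{\;\barr{\grb}}]_{\mathrm{lin}}+\del_{z^{\gra}}\barr{p^{\grb}}$, the new $\del_{w^{n}}$-component equals $B_{\gra}+\del_{z^{\gra}}q$ (linear part $[B_{\gra}]_{\mathrm{lin}}+\del_{z^{\gra}}q$), and the $\del_{w^{\grb}}$-component becomes $\grd_{\gra}^{\;\grb}+\del_{z^{\gra}}p^{\grb}$. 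I would choose the coefficients of $p^{\grb}$ to solve $\del_{z^{\gra}}\barr{p^{\grb}}=-[A_{\gra}^{\;\barr{\grb}}]_{\mathrm{lin}}$ --- the $z^{\grg}$-terms match through the $zz$-part of $p^{\grb}$, whose symmetry is exactly that of $a_{\gra\grg}^{\;\barr{\grb}}$, while the $z^{\barr{\grg}}$- and $x^{n}$-terms match freely through the $z\barr{z}$- and $\barr{z}x^{n}$-parts --- and choose $q$ to solve $\del_{z^{\gra}}q=i\grd_{\gra\barr{\grb}}z^{\barr{\grb}}-[B_{\gra}]_{\mathrm{lin}}$, where the $z^{\grg}$-terms use $b_{\gra\grg}=b_{\grg\gra}$, the $x^{n}$-term is free, and the $z^{\barr{\grg}}$-terms force the coefficient of $z^{\grd}z^{\barr{\gre}}$ in $q$ to be Hermitian, which holds precisely because of the Levi normalization. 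The $\del_{w^{\grb}}$-components are then no longer $\grd_{\gra}^{\;\grb}$, so a final multiplication of the frame by an invertible $C^{m}$ matrix of the form ``identity plus $O(1)$'' restores them; since $\tilde{A}_{\gra}^{\;\barr{\grb}}$ is already $O(2)$ and the linear part of $\tilde{B}_{\gra}$ is already $i\grd_{\gra\barr{\grb}}z^{\barr{\grb}}$, this last step preserves both, and (1.1) is attained, with all coefficients in $C^{m}$.

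\emph{Main obstacle.} The delicate point is the bookkeeping in the second move: verifying that the free coefficients of $p^{\grb}$ and of the real quadratic $q$ are exactly enough to absorb the complete first-order jets of $A_{\gra}^{\;\barr{\grb}}$ and of $B_{\gra}$ except for the term $i\grd_{\gra\barr{\grb}}z^{\barr{\grb}}$, and that the single surviving obstruction is the positive Hermitian Levi form, already normalized. It is the integrability symmetries and the reality of $q$ that make the relevant inhomogeneous linear systems solvable; granted this, the rest is elementary.
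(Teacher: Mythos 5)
Your proof is correct and follows essentially the same route the paper sketches (linear normalization and frame change, Levi form normalization, then a polynomial quadratic coordinate change whose solvability is supplied by the integrability symmetries and the Hermitian Levi normalization, with a final frame adjustment), filling in the bookkeeping that the paper delegates to~[24]. One tiny slip: the $z^{\gamma}$-coefficient in $\del_{z^{\gra}}\barr{p^{\grb}}$ comes from the $\barr{z}\barr{z}$-part of $p^{\grb}$ (equivalently the $zz$-part of $\barr{p^{\grb}}$), not the $zz$-part of $p^{\grb}$; this does not affect the argument.
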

\begin{proof}
We only sketch the proof, since the principle is well known.
We first choose linear coordinates so that
$X_{\gra}(0) = \del_{\gra}\equiv \del/\del z^{\gra}$. Then a frame change
achieves the form (1.1) with $\tilde{A}_{\gra}^{\;\barr{\grb}}(0)=0$,
$\tilde{B}_{\gra}(0)=0$.  For such a frame the integrability condition
is equivalent to the Lie brackets $[X_{\gra},X_{\grb}]=0$, or
\begin{equation}
  X_{\gra}\tilde{A}^{\;\barr{\grg}}_{\grb} =
  X_{\grb}\tilde{A}^{\;\barr{\grg}}_{\gra} , \; \;
  X_{\gra}\tilde{B}_{\grb} = X_{\grb}\tilde{B}_{\gra} .
\end{equation}
The Levi form is the hermitian matrix $g_{\gra\barr{\grb}}$ defined by
\begin{eqnarray}
     [X_{\gra},X_{\barr{\grb}}] &  \equiv  &
     -ig_{\gra\barr{\grb}}\del_{x^{n}} , \; \mbox{mod}
     \{ X_{\grg},X_{\barr{\grg}} \}, \\
     g_{\gra\barr{\grb}}(0) & = & i( X_{\gra}\tilde{B}_{\barr{\grb}} -
      X_{\barr{\grb}}\tilde{B}_{\gra} )(0). \nonumber
\end{eqnarray}
It is assumed to be positive definite. After a change of the frame
$X_{\gra}$, we may assume $g_{\gra\barr{\grb}}(0)=2\grd_{\gra\barr{\grb}}(0)$.
The symmetry in $\gra,\grb$ in (1.2)
allows us to remove certain terms using Taylor polynomials in $z'=(z^{\gra})$
and get the normalization along the $x^{n}$-axis.  We replace this
normalizing transformation by its Taylor polynomial at $0$, to avoid losing
derivatives.  For more details see the first
section of [24].\end{proof}

We now consider approximate CR, or holomorphic, embeddings of the form
\begin{equation}
  Z = (z',z^{n}), \; \; z^{n} = x^{n} + iy^{n},\; \; y^{n}=y^{n}(z',x^{n}),
\end{equation}
onto a smooth strictly pseudoconvex real hypersurface
$M\subset\mathbf{C}^{n}$, of the form
\begin{equation}
   M : \; r = 0,\; \; r(z) = -y^{n} + |z'|^{2} + h(z',x^{n}),
\end{equation}
where $h\equiv O(2)$ is a $C^{\infty}$-smooth real function defined on a
neighborhood $D$ of $0$, and vanishing to order 2 at $0$,
\begin{equation}
 0\in D\subseteq\mathbf{R}^{2n-1}=\{y^{n}=0\}\subset\mathbf{C}^{n}.
\end{equation}
The hypersurface and embedding will be referred to as \emph{normalized}.

A basis of complex tangential (1,0)-vector fields to $M$ is given by
\begin{equation}
  Y_{\gra} = \del_{\gra} - (r_{\gra}/r_{n})\del_{n}\equiv
             \del_{\gra} - (r_{\gra}/2r_{n})\del_{x^{n}},
\end{equation}
the latter being the representation on $\mathbf{R}^{2n-1}$.
The given vector fields $X_{\gra}$ and their complex conjugates
$X_{\barr{\gra}}$, now considered on $D$, will be \emph{adapted}
to the embedding $Z$ by the condition $X_{\gra}z^{\grb}=\grd_{\gra}^{\;\grb}$,
which determines them uniquely.  We then write them as
\begin{eqnarray}
  X_{\gra} & = & Y_{\gra} + A_{\gra}^{\;\barr{\grb}}Y_{\barr{\grb}} +
      B_{\gra}\del_{x^{n}} , \\
  X_{\barr{\gra}}z^{\grb} & = & A_{\barr{\gra}}^{\;\grb} \, , \, \; \;
  X_{\barr{\gra}}z^{n} \, = \,    ( 1 + i\del_{x^{n}}h)B_{\barr{\gra}} -
   A_{\barr{\gra}}^{\;\grb}(r_{\grb}/r_{n}).
\end{eqnarray}
It follows that the embedding is nearly holomorphic, to the extent that
the coefficients
$A_{\barr{\gra}}^{\;\grb}= \barr{A_{\gra}^{\;\barr{\grb}}}$,
and $B_{\barr{\gra}}=\barr{B_{\gra}}$ are small.

With the normalization (1.1), we achieve the embedding (1.4) with $h=0$
in (1.5) and with vector fields of the form (1.7), (1.8), where
$A_{\gra}^{\;\barr{\grb}} = \tilde{A}_{\gra}^{\;\barr{\grb}}$ and
$ B_{\gra} = B_{\gra}^{*} +
      iA_{\gra}^{\;\barr{\grb}}(r_{\barr{\grb}}/2r_{\barr{n}})$.  Thus,
the embedding is onto a real hyperquadric $M$, with $X_{\barr{\gra}}Z=O(2)$.

The real hyperquadric $M$, and the real hyperplane $y^{n}=0$, are
both invariant by the family of non-isotropic dilations,
$f_{\grr}(z',z^{n}) = (\grr z',\grr^{2}z^{n})$,
$f_{\grr}:D_{s}^{0}\rightarrow D_{\grr s}^{0}$, where
$D_{s}^{0}=\{|z'|^{4}+(x^{n})^{2} < s^{4}\}\subset\mathbf{R}^{2n-1}$.
We take $s$ small, $s\grr=1$, replace $Z$ by $f_{\grr}\circ Z$, and the
vector fields $X_{\gra}$ by
$X^{(\grr)}_{\gra}=\grr^{-1}(f_{\grr})_{*}(X_{\gra})$. Then
\begin{eqnarray}
   X^{(\grr)}_{\gra} & = & Y_{\gra}
   + A_{\gra}^{(\grr)\barr{\grb}} Y_{\barr{\grb}}
    + B^{(\grr)}_{\gra}\del_{x^{n}},\\
  A_{\gra}^{(\grr)\barr{\grb}} & = &
   A_{\gra}^{\;\barr{\grb}}(\grr^{-1}z',\grr^{-2}x^{n}), \;\;
  B^{(\grr)}_{\gra} =
   \grr B_{\gra}(\grr^{-1}z',\grr^{-2}x^{n}).
\end{eqnarray}
(Note that a general dilation $x\mapsto\grr x =(\grr_{1}x_{1},\ldots ,
\grr_{n}x_{n})$ transforms a vector field $\grS\grx_{j}(x)\del_{x_{j}}$
to $\grS\grx_{j}(\grr^{-1}x)\grr_{j}\del_{x_{j}}$.)
Taking any number $k\leq m$ of $(z',x^{n})$-derivatives, we see that
\begin{equation}
  \del^{k}A_{\gra}^{(\grr)\barr{\grb}} = O(\grr^{-2}), \;\;
  \del^{k}B_{\gra}^{(\grr)} = O(\grr^{-1}),
\end{equation}
hence $X_{\barr{\gra}}^{(\grr)}[f_{\grr}\circ Z]=O(\grr^{-1})$,
in $C^{m}$-norm, as $\grr\rightarrow\infty$. Thus, by shrinking our
original domain in $\mathbf{R}^{2n-1}$, dilating it to unit size,
and dropping the notation $(\grr)$ in (1.10), we achieve the following lemma.
\begin{lemma}
Near any point, taken as the origin, there are approximate holomorphic
embeddings $Z$, of the form (1.4) defined on the unit ball $B_{1}$ in
$\mathbf{R}^{2n-1}$, for the given CR structure of class $C^{m}$, $m\geq2$.
Basis vector fields $X_{\gra}$ for the structure satisfy
$X_{\barr{\gra}}Z=O(2)$, and may be taken in the form (1.8), with
$X_{\barr{\gra}}Z$ as small as we please in the $C^{m}(B_{1})$-norm.
\end{lemma}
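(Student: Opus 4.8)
The plan is to produce the embedding by a pointwise normalization followed by a non-isotropic rescaling, rather than by matching Taylor polynomials to high order. First I would invoke Lemma~1.1 to put the $X_{\gra}$ in the form (1.1), so that $\tilde{A}_{\gra}^{\;\barr{\grb}}$ and $B_{\gra}^{*}$ vanish to second order at $0$ and are of class $C^{m}$. I would then take the target to be the real hyperquadric $M:\,y^{n}=|z'|^{2}$ (i.e.\ $h=0$ in (1.5)) and the embedding to be $Z=(z',\,x^{n}+i|z'|^{2})$ as in (1.4). Since the $\grb$-th coordinate function $z^{\grb}$ is annihilated by $\del_{\barr{\grd}}$ and $\del_{x^{n}}$, the frame (1.1) already satisfies $X_{\gra}z^{\grb}=\grd_{\gra}^{\;\grb}$, so it is the frame adapted to $Z$; writing it against the tangential basis $Y_{\gra}$ of (1.7) puts it in the form (1.8), with $A_{\gra}^{\;\barr{\grb}}=\tilde{A}_{\gra}^{\;\barr{\grb}}$ and $B_{\gra}=B_{\gra}^{*}+iA_{\gra}^{\;\barr{\grb}}(r_{\barr{\grb}}/2r_{\barr{n}})$, a short computation using $h=0$. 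In particular $A_{\gra}^{\;\barr{\grb}}$ and $B_{\gra}$ are $O(2)$, and then (1.8) shows $X_{\barr{\gra}}Z=O(2)$. This gives an approximate embedding of the required shape; what remains is to make $X_{\barr{\gra}}Z$ small in $C^{m}$.

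For that I would use that both $M$ and the real hyperplane $\{y^{n}=0\}$ are invariant under the non-isotropic dilations $f_{\grr}(z',z^{n})=(\grr z',\grr^{2}z^{n})$, so $f_{\grr}\circ Z$ is again a normalized embedding onto the same hyperquadric, with adapted frame $X_{\gra}^{(\grr)}=\grr^{-1}(f_{\grr})_{*}X_{\gra}$. A direct application of the transformation law for vector fields under dilation yields (1.10)--(1.11): the new coefficients are $A_{\gra}^{(\grr)\barr{\grb}}(z',x^{n})=A_{\gra}^{\;\barr{\grb}}(\grr^{-1}z',\grr^{-2}x^{n})$ and $B_{\gra}^{(\grr)}(z',x^{n})=\grr B_{\gra}(\grr^{-1}z',\grr^{-2}x^{n})$. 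The crucial estimate is then (1.12). It is a chain-rule bound: each $z'$-derivative falling on $A_{\gra}^{(\grr)\barr{\grb}}$ brings out a factor $\grr^{-1}$ and each $x^{n}$-derivative a factor $\grr^{-2}$, while on $B_{1}$ the point $(\grr^{-1}z',\grr^{-2}x^{n})$ ranges over a set of non-isotropic size $\sim\grr^{-1}$. Balancing these, the second-order vanishing of $A_{\gra}^{\;\barr{\grb}}$ and $B_{\gra}^{*}$ forces $\del^{k}A_{\gra}^{(\grr)\barr{\grb}}=O(\grr^{-2})$ and $\del^{k}B_{\gra}^{(\grr)}=O(\grr^{-1})$ on $B_{1}$, uniformly in $k\leq m$, whence $X_{\barr{\gra}}^{(\grr)}[f_{\grr}\circ Z]=O(\grr^{-1})$ in $C^{m}(B_{1})$ as $\grr\to\infty$.

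Finally I would fix $\grr$ large, equivalently take the original data on the small ball $D_{s}^{0}$ with $s\grr=1$ and dilate it to unit size, then rename $f_{\grr}\circ Z$ as $Z$ and $X_{\gra}^{(\grr)}$ as $X_{\gra}$ and drop the superscript $(\grr)$. This delivers, on $B_{1}$, an approximate holomorphic embedding of the form (1.4) whose adapted frame has the form (1.8), with $X_{\barr{\gra}}Z=O(2)$ and $\|X_{\barr{\gra}}Z\|_{C^{m}(B_{1})}$ as small as prescribed. The only point needing care is the uniformity in the estimate (1.12): one must check that the $O(2)$ vanishing really beats the derivative loss for \emph{every} order $k\leq m$ at once. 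For small $k$ this uses the quadratic vanishing (for the top $z'$-derivatives, mere $C^{m}$-boundedness already suffices), and for $k\geq3$ it follows simply because $\grr^{-k}\leq\grr^{-2}$. There is no other analytic difficulty; the substantive idea is that non-isotropic dilation trades second-order smallness at a point for global $C^{m}$ smallness, thereby avoiding the higher-order Taylor arguments of [24].
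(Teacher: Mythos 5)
Your proof is correct and follows essentially the same route as the paper: apply Lemma 1.1 to normalize the frame, read off the adapted-frame coefficients against the hyperquadric $h=0$, and then use the non-isotropic dilation $f_{\grr}$ together with the quadratic vanishing of $A_{\gra}^{\;\barr{\grb}}$, $B_{\gra}$ to get (1.10)--(1.12) and hence $C^{m}(B_1)$-smallness of $X_{\barr{\gra}}Z$. The only difference is that you spell out the case analysis in $k$ behind (1.12), which the paper leaves implicit.
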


\section{Local real hypersurfaces in $\mathbf{C}^{n}$.}
\setcounter{equation}{0}

In this section we consider a local, normalized, strictly pseudoconvex
real hypersurface $M\subset\mathbf{C}^{n}$.  First we determine the
relevant geometric properties of $M$, under some further conditions.
Then we recall the local tangential Cauchy-Riemann complex, and the local
homotopy formula first given by Henkin.

To establish more  precisely the needed properties of $M$ (1.5), we assume
that the domain $D$ (1.6) is a convex neighborhood of $0$.  We further define
$x = (z',x^{n})$, $|x|^{2} = (x^{n})^{2} + z'\cdot\barr{z}'$, and
\begin{eqnarray}
  \grps(x) & = &  |x|^{2} + h(x) = (x^{n})^{2} + |z'|^{2} + h(z',x^{n}), \\
  D_{\grr} & \equiv & D_{\grr}(h) \, = \{x\in D \, | \, \grps(x)\leq\grr^{2}\}.
\end{eqnarray}

By Taylor's formula, $|h(x)|\leq c_{2}|x|^{2}$ on $D$, where
$c_{2} = \|h\|_{C^{2}(D)}$.  We assume that $c_{2} < 1$ so that
\begin{eqnarray}
   (1-c_{2})|x|^{2} & \leq \grps(x) & \leq \, (1+c_{2})|x|^{2}, \\
    B(\grr/\sqrt{1+c_{2}})\cap D & \subseteq D_{\grr} & \subseteq \,
    B(\grr/\sqrt{1-c_{2}})\cap D ,
\end{eqnarray}
where $B(\grr)$ denotes the ball of radius $\grr$ centered at $0$.
In particular, $D_{\grr}$ is contained in the interior of $D$, if
$\grr > 0$ is sufficiently small, which we now also assume.

To investigate the smoothness of $\del D_{\grr}$, suppose $0=d\grps(x)$.
Then $2|x|^{2}=-x^{j}h_{j}(x)$, so that $2|x|^{2}\leq c_{2}|x|^{2}$;
hence $x=0$. It follows that $\del D_{\grr}$ is smooth for small $\grr>0$.
Then $D_{\grr}$ will be strictly convex, if the Hessian $(\grps_{ij})$
is positive definite on $D$. But this also holds, since
$\grps_{ij}(x)v^{i}v^{j}=2|v|^{2} + h_{ij}(x)v^{i}v^{j}
\geq (2-c_{2})|v|^{2}$.

Finally, we estimate the distance between $\del D_{\grr}$ and
$\del D_{\hat{\grr}}$, where $\hat{\grr}=\grr(1-\grs)$, $0<\grs<1$,
assuming these to be compact, smooth, and strictly convex.  Let
$x_{0}\in\del D_{\hat{\grr}}$ and $x_{1}\in\del D_{\grr}$ satisfy
$|x_{0}-x_{1}|=$ dist$(\del D_{\hat{\grr}},\del D_{\grr})$.  Then the
segment $[x_{0},x_{1}]$ is orthogonal to the two tangent planes, and
$|x_{1}|>|x_{0}|$, since $D_{\hat{\grr}}$ is a convex neighborhood of $0$.
By the Schwarz inequality and the fact that
$|dh(x)|\leq c_{2}|x_{1}|$, for $x$ on the segment $[x_{0},x_{1}]$,
we have
\begin{equation}
\begin{array}{rcl}
  \grr^{2} - \hat{\grr}^{2} & = & \la x_{1}-x_{0},x_{1}\ra +
   \la x_{0},x_{1}-x_{0}\ra +
   h(x_{1}) - h(x_{0}) \\
    & \leq & |x_{1}-x_{0}|(|x_{0}| + |x_{1}| ) +
     |x_{1}-x_{0}| c_{2}|x_{1}| \\
    & \leq & (2+c_{2})|x_{1}-x_{0}|\cdot|x_{1}|.
\end{array}
\end{equation}
But by (2.9), $\grr^{2} \geq(1 - c_{2})|x_{1}|^{2}\geq (1/2)|x_{1}|^{2}$,
provided $c_{2}<1/2$.  Then
\begin{equation}
  \grr\grs \, \leq \, \hat{c}\cdot \mbox{dist}
         (\del D_{\grr(1-\grs)},\del D_{\grr}),
\end{equation}
where $\hat{c}= \sqrt{2}(2+c_{2})\leq3\sqrt{2}$.

This proves the following.
\begin{lemma}
  There is a constant $\grg_{0}$, $0 < \grg_{0} < 1/2$, such that the
following holds. Let $D$ in (1.6) be a convex neighborhood of $0$, and
let the smooth real function $h$ as above satisfy
\begin{equation}
  h = O(|x|^{2}), \; \;   \|h\|_{C^{2}(D)} < \grg_{0}.
\end{equation}
Suppose that $\grr_{0}>0$ is so small that
$D_{\grr_{0}}$ in (2.2) is contained in the interior of $D$. Then
for each  $0<\grr\leq\grr_{0}$, the domain $D_{\grr}$ is compact, smoothly
bounded, strictly convex, and
\begin{equation}
  B(\sqrt{2/3}\grr)\,\subseteq \, D_{\grr}\,\subseteq \,
  B(\sqrt{2}\grr).
\end{equation}
For $0<\grr\leq\grr_{0}$ and
$0<\grs<1$, the inequality (2.6) holds.
\end{lemma}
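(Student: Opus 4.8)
The plan is to collect the elementary convexity and size estimates for the sublevel sets $D_{\grr}=D_{\grr}(h)$ that have already been derived in the running text, and to package them into the clean quantitative statement of the lemma. First I would fix the constant $\grg_{0}$: take $\grg_{0}=1/2$ (or any value $0<\grg_{0}<1/2$), so that the hypothesis $\|h\|_{C^{2}(D)}<\grg_{0}$ gives $c_{2}=\|h\|_{C^{2}(D)}<1/2$, which is exactly what is needed for all the estimates in section~2 to apply simultaneously. With this choice, Taylor's formula gives $|h(x)|\le c_{2}|x|^{2}$ on $D$, hence the two-sided bound $(1-c_{2})|x|^{2}\le\grps(x)\le(1+c_{2})|x|^{2}$; combined with $c_{2}<1/2$ this yields $\tfrac12|x|^{2}\le\grps(x)\le\tfrac32|x|^{2}$, and therefore the inclusion $B(\sqrt{2/3}\,\grr)\subseteq D_{\grr}\subseteq B(\sqrt{2}\,\grr)$, which is precisely (2.11). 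The hypothesis that $\grr_{0}$ is chosen so small that $D_{\grr_{0}}$ lies in the interior of $D$ is then inherited by every $D_{\grr}$ with $0<\grr\le\grr_{0}$, since $\grr\mapsto D_{\grr}$ is monotone increasing.

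Next I would record the regularity and convexity of the boundary. The computation already in the text shows that if $d\grps(x)=0$ then $2|x|^{2}=-x^{j}h_{j}(x)\le c_{2}|x|^{2}$, forcing $x=0$; since $0\notin\del D_{\grr}$ for $\grr>0$, the differential $d\grps$ is nonvanishing on $\del D_{\grr}$, so $\del D_{\grr}$ is a smooth hypersurface. Compactness follows from $D_{\grr}\subseteq B(\sqrt{2}\,\grr)$ together with closedness of the sublevel set. Strict convexity follows from positivity of the Hessian: $\grps_{ij}(x)v^{i}v^{j}=2|v|^{2}+h_{ij}(x)v^{i}v^{j}\ge(2-c_{2})|v|^{2}>0$, using $c_{2}<1/2<2$. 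This gives all the assertions of the first part of the lemma.

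Finally I would establish the distance estimate (2.6) for $0<\grr\le\grr_{0}$ and $0<\grs<1$, which is the one place where a little care is needed and which I expect to be the main (though still modest) obstacle. Set $\hat\grr=\grr(1-\grs)$ and pick points $x_{0}\in\del D_{\hat\grr}$, $x_{1}\in\del D_{\grr}$ realizing $\mathrm{dist}(\del D_{\hat\grr},\del D_{\grr})$. Since $D_{\hat\grr}$ is a convex neighborhood of $0$ and $D_{\hat\grr}\subseteq D_{\grr}$, the segment $[x_{0},x_{1}]$ is normal to both boundaries and $|x_{1}|>|x_{0}|$. Writing $\grr^{2}-\hat\grr^{2}=\grps(x_{1})-\grps(x_{0})=\la x_{1}-x_{0},x_{1}\ra+\la x_{0},x_{1}-x_{0}\ra+h(x_{1})-h(x_{0})$ and using $|dh|\le c_{2}|x_{1}|$ on the segment together with the Schwarz inequality, one gets $\grr^{2}-\hat\grr^{2}\le(2+c_{2})|x_{1}-x_{0}|\cdot|x_{1}|$. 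Since $\grr^{2}\ge(1-c_{2})|x_{1}|^{2}\ge\tfrac12|x_{1}|^{2}$, we have $|x_{1}|\le\sqrt{2}\,\grr$, while $\grr^{2}-\hat\grr^{2}=\grr^{2}(2\grs-\grs^{2})\ge\grr^{2}\grs$; combining these gives $\grr^{2}\grs\le\sqrt{2}(2+c_{2})\grr\,|x_{1}-x_{0}|$, i.e. $\grr\grs\le\hat c\cdot\mathrm{dist}(\del D_{\grr(1-\grs)},\del D_{\grr})$ with $\hat c=\sqrt{2}(2+c_{2})\le3\sqrt{2}$. This is exactly (2.7)/(2.6), completing the proof. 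The subtle points to get right are the direction of the inequality $|x_{1}|>|x_{0}|$ (which uses convexity of the inner domain and the fact that $0$ lies inside it) and the lower bound $2\grs-\grs^{2}\ge\grs$ valid for $0<\grs<1$; everything else is bookkeeping of constants already present in the text.
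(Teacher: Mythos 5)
Your proposal is correct and follows the paper's proof line by line: the same two-sided bound $(1-c_{2})|x|^{2}\le\grps(x)\le(1+c_{2})|x|^{2}$, the same critical-point and Hessian computations for smoothness and strict convexity, and the same Schwarz-inequality argument for the distance estimate (2.6), including the crucial observations $|x_{1}|\le\sqrt{2}\grr$ and $2\grs-\grs^{2}\ge\grs$ for $0<\grs<1$. The only nit is your opening "take $\grg_{0}=1/2$": the lemma requires $\grg_{0}$ \emph{strictly} less than $1/2$ (this strict bound is reused later, e.g.\ in Lemma~4.2), so one should fix any $\grg_{0}\in(0,1/2)$, as your parenthetical already acknowledges.
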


We now consider the vector fields $Y_{\gra}$ (1.7), and $X_{\grs}$
(1.8), on $D$ and the corresponding tangential CR-complexes.
For functions $f$ on $M$ (or on $D$), we define
\begin{equation}
  \barr{\del}_{M}f = \sum Y_{\barr{\gra}}fdz^{\barr{\gra}}, \; \;
  \barr{\del}_{X}f = \sum X_{\barr{\gra}}fdz^{\barr{\gra}}.
\end{equation}
For a tangential (0,q)-form on $M$,
$\grph = \sum \grph_{\barr{A}}d\barr{z}^{\barr{A}}$, in standard multi-index
notation, $A=(\gra_{1},\ldots ,\gra_{n-1})$, $1\leq\gra_{j}\leq n-1$,  we set
\begin{equation}
  \barr{\del}_{M}\grph =
  \sum \barr{\del}_{M}\grph_{\barr{A}}\wedge d\barr{z}^{\barr{A}},  \; \; \;
  \barr{\del}_{X}\grph =
  \sum \barr{\del}_{X}\grph_{\barr{A}}\wedge d\barr{z}^{\barr{A}}.
\end{equation}
From the form of the vectors $X_{\gra}, Y_{\gra}$, and their respective
integrability conditions, it follows that $[X_{\gra},X_{\grb}]=0$, and
$[Y_{\gra},Y_{\grb}]=0$.   This gives
\begin{equation}
  (\barr{\del}_{M})^{2} = 0, \; \; \; (\barr{\del}_{X})^{2} = 0.
\end{equation}

For $0<\grr\leq\grr_{0}$ as in the lemma, we denote by $M_{\grr}$ the
part of $M$ lying over $D_{\grr}$.  Thus,
\begin{equation}
  M_{\grr} \, = \, M \cap \{(z',z^{n})\in\mathbf{C}^{n} \, | \,
 (x^{n})^{2} + y^{n} \, \leq \, \grr^{2}  \}.
\end{equation}
It is a compact, smoothly bounded domain on the strictly
pseudoconvex real hypersurface $M$.  Its main property is that it is
cut out by a real function of the holomorphic function, $z^{n}$.
For such domains we have the local Henkin homotopy formula [8], [23], [6],
\begin{equation}
   \grph = \barr{\del}_{M}P\grph + Q\barr{\del}_{M}\grph,
\end{equation}
for (0,q)-forms $\grph$ as above,
where $P$, $Q$ involve certain integral operators over $M_{\grr}$ and
its boundary.  This formula is valid only for $0 < q < n-2$.  We use it
in the case $q=1$.  This requires dim $M =2n-1 \geq 7$, thus excluding the
five dimensional case.  This is the only point in our argument where
this condition is required.  In the 5 dimensional case there is an extra
term added to the right hand side of (2.13).  As of this writing, it remains
unclear whether such a more general homotopy formula can be used.  See [24]
and Nagel-Rosay [19].

\section{Alteration of the embedding.}
\setcounter{equation}{0}
Given an embedding $Z$ such that the error
$\|\barr{\del}_{X}Z\|= \sum\|X_{\barr{\gra}}Z\|$ is
\lq\lq small\rq\rq , we wish to perturb it to one
$Z_{*}(z',x^{n})=Z(z',x^{n})+F(z',x^{n})$, so that the new error
$\|\barr{\del}_{X}Z_{*}\|$ is \lq\lq smaller\rq\rq.  These terms as well
as the norms will be made precise later.

We assume the results of the previous section, and apply the homotopy
formula (2.13) component-wise to
$\grph = \barr{\del}_{X}Z$, to get
\begin{eqnarray}
  \barr{\del}_{X}Z_{*} & = & \barr{\del}_{X}Z + \barr{\del}_{M}F +
   (\barr{\del}_{X}- \barr{\del}_{M})F \\ &  = &
   \barr{\del}_{M}(P\barr{\del}_{X}Z + F)+Q(\barr{\del}_{M}\barr{\del}_{X}Z)
   +(\barr{\del}_{X}- \barr{\del}_{M})F. \nonumber
\end{eqnarray}
By Newton's method we would take $F=-P\barr{\del}_{X}Z$.  The integrability
condition gives $\barr{\del}_{M}\barr{\del}_{X} =
(\barr{\del}_{M} -\barr{\del}_{X})\barr{\del}_{X}$. The coefficients of
$\barr{\del}_{X}- \barr{\del}_{M}$ are dominated by $\|\barr{\del}_{X}Z\|$,
by (1.8), (1.9). Thus, formally we
would have $\|\barr{\del}_{X}Z_{*}\| \leq \|\barr{\del}_{X}Z\|^{2}$.
However, the operator $P$ may not fully regain the derivative lost in
applying $\barr{\del}_{X}$ to $Z$.  Thus we introduce the smoothing
operator $S_{t}$ of section 7, and set
\begin{equation}
  F = - S_{t}P\barr{\del}_{X}Z.
\end{equation}
Then
\begin{equation}
 \barr{\del}_{X}Z_{*} = \barr{\del}_{M}(I - S_{t})P\barr{\del}_{X}Z +
  Q\barr{\del}_{M}\barr{\del}_{X}Z + (\barr{\del}_{X}- \barr{\del}_{M})F.
\end{equation}
This 3-term decomposition can be made to work [24]; however, its main
deficiency seems to be that there is no smoothing in the $Q$ term.
Thus, we introduce the commutator, applying $S_{t}$ to the coefficients of
a differential form,  and write
\begin{equation}
 \barr{\del}_{X}Z_{*} = [\barr{\del}_{M},(I - S_{t})]P\barr{\del}_{X}Z +
  (I - S_{t})\barr{\del}_{M}P\barr{\del}_{X}Z +
  Q\barr{\del}_{M}\barr{\del}_{X}Z + (\barr{\del}_{X}- \barr{\del}_{M})F.
\end{equation}
Using the homotopy formula, $\barr{\del}_{M}P\barr{\del}_{X}Z =
  \barr{\del}_{X}Z - Q\barr{\del}_{M}\barr{\del}_{X}Z$,
a second time, in the second term, gives
\begin{equation}
  \barr{\del}_{X}Z_{*} = I_{1} + I_{2} +I_{3} +I_{4} ,
\end{equation}
where
\begin{equation}
\begin{array}{lclclcl}
  I_{1} & = & (I-S_{t})\barr{\del}_{X}Z & ; & I_{2} & =
        & (\barr{\del}_{M}- \barr{\del}_{X})S_{t}P\barr{\del}_{X}Z ; \\
  I_{3} & = & [S_{t},\barr{\del}_{M}]P\barr{\del}_{X}Z & ; &
  I_{4} & = & S_{t}Q(\barr{\del}_{M}- \barr{\del}_{X})\barr{\del}_{X}Z .
\end{array}
\end{equation}
One may see that formally each term contains the product of 2
\lq\lq small\rq\rq\ factors, hence should be \lq\lq smaller\rq\rq.
The precise estimates will be given below.

\section{Renormalization of the embedding.}
\setcounter{equation}{0}
While the change of section 3 will take an embedding with \lq\lq small\rq\rq\
error to one with \lq\lq smaller\rq\rq\ error, it tends to destroy the
normalizations of lemma 2.1, which we use to set up and estimate the homotopy
formula. In this section we give a precise procedure to
restore these normalizations, while retaining a \lq\lq smaller\rq\rq\ error.
We point out that this section represents a considerable simplification
and improvement over the corresponding argument in [24], where third
order normalization was used.

We now assume that $\|\barr{\del}_{X}Z\|$ is
\lq\lq\ small\rq\rq, and (see (1.8),(1.9))
\begin{equation}
  \barr{\del}_{X}Z(0) = 0, \; \Leftrightarrow \;
  A_{\gra}^{\;\barr{\grb}}(0) = 0, \; B_{\gra}(0) = 0,
\end{equation}
which clearly holds for the initial embedding of section one.  We shall
renormalize $Z_{*}=Z+F$, $F$ given by (3.2),  using the first order
Taylor polynomial of $F$ at $0$,
\begin{equation}
\begin{array}{rcl}
  F(z',x^{n}) & = & K_{0}+K_{\gra}z^{\gra} + K_{\barr{\gra}}z^{\barr{\gra}}
  + K_{n}x^{n} + F_{(2)}, \\
   & \equiv &  K_{0}+\hat{K}\cdot x + F_{(2)}(x) , \\
   F_{(2)}(z',x^{n}) & = & O(2),
\end{array}
\end{equation}
where each $K_{j}=(K_{j}^{\gra},K_{j}^{n})\in\mathbf{C}^{n}$ is a
constant
vector, and  \lq\lq small\rq\rq .  Notice that
$X_{\barr{\gra}}Z_{*}(0) = X_{\barr{\gra}}F(0) = K_{\barr{\gra}}$, so that
$ |K_{\barr{\gra}}| \leq \|\barr{\del}_{X}Z_{*}\|_{0}$
is actually a \lq\lq smaller\rq\rq\ quantity.

Now we replace $Z_{*} = Z + F$ by
\begin{eqnarray}
  Z_{*} & = & Z + F + E = Z + (F_{(2)} - iK_{n}y^{n})    , \\
  E     & = & -K_{0} - K_{\gra}z^{\gra} - K_{n}z^{n} -
               K_{\barr{\gra}}z^{\barr{\gra}}.
\end{eqnarray}
It follows that $\barr{\del}_{X}Z_{*}(0) = 0$, since $y^{n}=O(2)$; and
$\|\barr{\del}_{X}Z_{*}\|$ remains \lq\lq smaller\rq\rq, since each
$X_{\barr{\grb}}E = -K_{\gra}X_{\barr{\grb}}z^{\gra}
-K_{n}X_{\barr{\grb}}z^{n} - K_{\barr{\grb}}$ is \lq\lq smaller\rq\rq .
We write (4.3) as
\begin{equation}
\begin{array}{lcl}
  z_{*}^{\gra} & = & z^{\gra} + f_{(2)}^{\gra}, \; \;
  f_{(2)}^{\gra} = F_{(2)}^{\gra} - iK_{n}^{\gra}y^{n}, \\
  z_{*}^{n} & = & z^{n} + F_{(2)}^{n} - iK_{n}^{n}y^{n}.
\end{array}
\end{equation}
With $K^{n}_{n}= K^{'n}_{n}+iK^{''n}_{n}$, the second equation becomes
\begin{equation}
\begin{array}{lcl}
  x_{*}^{n} & = & x^{n} + f_{(2)}^{n}, \; \;
  f_{(2)}^{n} = \mbox{Re}(F_{(2)}^{n}) + K^{''n}_{n}y^{n} , \\
  y_{*}^{n} & = & y^{n} + \mbox{Im}(F_{(2)}^{n}) - K^{'n}_{n}y^{n} \\
            & = & |z'|^{2} + h + \hat {h} , \\
  \hat{h}   & = & \mbox{Im}(F^{n}_{(2)}) - K^{'n}_{n}y^{n}.
\end{array}
\end{equation}

The first equations in (4.5) and (4.6) define a map $f$, which is
the projection of the map $Z+F+E$ to the real hyperplane
$y^{n}=0$. It is a local diffeomorphism  of neighborhoods of $0$
in $\mathbf{R}^{2n-1}$.  We let $g$ be its inverse.  Then
\begin{equation}
  f = I + f_{(2)}, \; \; \; f^{-1} = g = I + g_{(2)},
\end{equation}
where $f_{(2)}=O(2)$, $g_{(2)}=O(2)$.
We define the new normalized embedding $Z_{1}$  and new real hypersurface
$M_{1}$, the image of $Z_{1}$,  by
\begin{equation}
\begin{array}{rcl}
  Z_{1} & = & Z_{*}\circ g = (Z+F+E)\circ g = (z',z^{n}_{1}), \\
  z^{n}_{1} & = & x^{n}+iy^{n}_{1}(z',x^{n}), \\
  M_{1} & = & \{r_{1} = 0\}, \; \; \;  r_{1} = -y^{n}+
      |z'|^{2} + h_{1}(z',x^{n}).
\end{array}
\end{equation}
We have
\begin{equation}
  h_{1}(z',x^{n}) = (h + \hat{h})\circ g + \grd_{\gra\barr{\grb}}
   (z^{\gra}g^{\barr{\grb}}_{(2)}+g^{\gra}_{(2)}z^{\barr{\grb}}+
      g^{\gra}_{(2)}g^{\barr{\grb}}_{(2)}),
\end{equation}
or
\begin{equation}
  h_{1} - h = (h\circ g - h) + \hat{h}\circ g + \grd_{\gra\barr{\grb}}
   (z^{\gra}g^{\barr{\grb}}_{(2)}+g^{\gra}_{(2)}z^{\barr{\grb}}+
      g^{\gra}_{(2)}g^{\barr{\grb}}_{(2)}).
\end{equation}

To determine the new adapted frame field $X^{1}_{\gra}$ for $Z_{1}$,
we think of $f$ as mapping our given CR structure to an equivalent
one.  To achieve $\grd_{\gra}^{\;\grb}=X^{1}_{\gra}z^{\grb}_{1}$, we
define a matrix of functions $C^{\;\grb}_{\gra}X_{\grb}$ by the
following.  (Note that $f_{*}(V)_{x}[h]=V_{g(x)}[h\circ f]$, and
$g\circ f = id$.)
\begin{equation}
\begin{array}{rcl}
  X^{1}_{\gra} & = & f_{*}(C^{\;\grb}_{\gra}X_{\grb}), \\
  \grd^{\;\grb}_{\gra} & = & C^{\;\grg}_{\gra}X_{\grg}[Z+F+E]^{\grb} =
    C^{\;\grg}_{\gra}(\grd^{\;\grb}_{\grg} + X_{\grg}f^{\grb}_{(2)}), \\
    C^{\;\grb}_{\gra} & = & \grd^{\;\grb}_{\gra} -X_{\gra}f^{\grg}_{(2)}
    (\grd^{\;\grb}_{\grg} + X_{\grg}f^{\grb}_{(2)})^{-1},
\end{array}
\end{equation}
where $(\cdot)^{-1}$ indicates matrix inverse. This relies on the smallness
of $X_{\gra}f^{\grb}_{(2)}$ near $0$.   We also have, using
$g_{*}f_{*}=I$,
\begin{equation}
\begin{array}{rcl}
  \barr{\del}_{X^{1}}Z_{1} & \equiv & \{X^{1}_{\barr{\gra}}Z_{1}\}
       =  \{(g_{*}(X^{1}_{\barr{\gra}})[Z+F+E])\circ g\} \\
      & = & \{(C^{\;\barr{\grb}}_{\barr{\gra}}W_{\barr{\grb}})\circ      g\}, \\
  W_{\barr{\grb}} & = & X_{\barr{\grb}}[Z+F] - X_{\barr{\grb}}[Z+F](0) +
     \\ &  &
     -\del_{\gra}F(0)X_{\barr{\grb}}z^{\gra}
        -\del_{x^{n}}F(0)X_{\barr{\grb}}z^{n} .
\end{array}
\end{equation}

So far we have considered our maps $Z$ and $Z_{1}$ on the level of germs.
As such, we have two well defined functional relations,
\begin{equation}
  Z_{1} = \mathcal{H}(Z,F), \; \; \;
  \barr{\del}_{X^{1}}Z_{1} = \mathcal{G}(Z,F,\barr{\del}_{X}Z,DF).
\end{equation}
Here the operator  $\mathcal{H}$ is zero-th order, but involves composition
with the inverse of the projected map $f$.  The operator $\mathcal{G}$ is a
first order differential operator also involving the composition.

To compare the actual domains of definition of $Z$ and $Z_{1}$, we must
consider the inverse mapping theorem more carefully. We use the
notation $\|\cdot\|_{\grr,k} = \|\cdot\|_{C^{k}(D_{\grr})}$ of the
next section.
\begin{lemma}
   Let $D$, $h$, $\grg_{0}$, $\grr_{0}$, $D_{\grr_{0}}$, and
   $0<\grs<1$  be as in lemma 2.1, and let $\grr\leq\grr_{0}$.
   Suppose that $f=I+f_{(2)}$ is a smooth map from
   $D_{\grr}$ to $\mathbf{R}^{2n-1}$, with
\begin{equation}
  f_{(2)}=O(|x|^{2}), \; \;  \|f_{(2)}\|_{\grr_,1} \, \leq \, \grs/5.
\end{equation}
Then $f$ maps a compact smooth neighborhood
$U_{\grr}\subseteq D_{\grr}$ of $0$, diffeomorphically onto
$D_{\grr(1-\grs)}$.  The inverse map $g=f^{-1}$ has
the form $g=I + g_{(2)}$, where
\begin{equation}
     g_{(2)}=O(|x|^{2}), \; \;
     \|g_{(2)}\|_{\grr(1-\grs),a} \, \leq \, c_{a} \|f_{(2)}\|_{\grr,a},
\end{equation}
for $0\leq a\leq2$, where the constant $c_{a}$ is independent of the
domain and map.
\end{lemma}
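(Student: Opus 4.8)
The plan is to treat this as a quantitative inverse function theorem, in which the smallness hypothesis $\|f_{(2)}\|_{\grr,1}\le\grs/5$ and the geometry of the domains $D_\grr$ from Lemma~2.1 are balanced so that the image of $f$ covers the shrunken domain $D_{\grr(1-\grs)}$. First I would check that $f$ is a diffeomorphism of $D_\grr$ onto an open subset of $\mathbf{R}^{2n-1}$: since $\|Df_{(2)}\|_{C^0(D_\grr)}\le\grs/5<1$, the Jacobian $Df=I+Df_{(2)}$ is invertible at every point, so $f$ is a local diffeomorphism, and since $D_\grr$ is convex (Lemma~2.1) the mean value inequality $|f_{(2)}(x)-f_{(2)}(x')|\le(\grs/5)|x-x'|$ gives $|f(x)-f(x')|\ge(4/5)|x-x'|$, hence $f$ is globally injective on $D_\grr$. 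Since also $f_{(2)}=O(|x|^2)$ and $0\in D_\grr$, the same inequality yields $|f_{(2)}(x)|\le(\grs/5)|x|$ throughout $D_\grr$.

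Next I would prove $D_{\grr(1-\grs)}\subseteq f(D_\grr)$. On $\del D_\grr$ one has $|x|^2\le\grr^2/(1-c_2)$, so $|x|<\sqrt2\,\grr$ because $c_2=\|h\|_{C^2(D)}<\grg_0<1/2$, whence $|f(x)-x|=|f_{(2)}(x)|\le\grs\grr\sqrt2/5$ there; while for every $x\in\del D_\grr$ and $y\in D_{\grr(1-\grs)}$, convexity of $D_{\grr(1-\grs)}$ and estimate (2.6) give $|x-y|\ge\mbox{dist}(\del D_\grr,\del D_{\grr(1-\grs)})\ge\grr\grs/\hat c>\grr\grs\sqrt2/5$, using $\hat c=\sqrt2(2+c_2)<\tfrac52\sqrt2$. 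Hence $f(\del D_\grr)$ is disjoint from $D_{\grr(1-\grs)}$, and for each $y_0$ in the interior of $D_{\grr(1-\grs)}$ the homotopy $x\mapsto x+sf_{(2)}(x)$, $0\le s\le1$, never takes the value $y_0$ on $\del D_\grr$; a degree argument (equivalently, an open--closed and connectedness argument applied to $f(D_\grr)\cap D_{\grr(1-\grs)}$) then shows $y_0\in f(D_\grr)$, so $D_{\grr(1-\grs)}\subseteq f(D_\grr)$. I then set $U_\grr=f^{-1}(D_{\grr(1-\grs)})\subseteq D_\grr$; since $f(\del D_\grr)\cap D_{\grr(1-\grs)}=\emptyset$ this is a compact subset of the interior of $D_\grr$, and since $\del D_{\grr(1-\grs)}$ is smooth (Lemma~2.1) and $f$ is a diffeomorphism, $U_\grr$ is a compact smooth neighborhood of $0$ carried diffeomorphically onto $D_{\grr(1-\grs)}$, with inverse $g$.

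For the form and estimates of $g$, write $g=I+g_{(2)}$. From $f\circ g=I$ one gets $g_{(2)}=-f_{(2)}\circ g$, and together with $g(0)=0$, $Dg(0)=(Df(0))^{-1}=I$ this gives $g_{(2)}(0)=0$ and $Dg_{(2)}(0)=0$, i.e. $g_{(2)}=O(|x|^2)$. For the norms, $g_{(2)}=-f_{(2)}\circ g$ with $g(D_{\grr(1-\grs)})=U_\grr\subseteq D_\grr$ gives $\|g_{(2)}\|_{\grr(1-\grs),0}\le\|f_{(2)}\|_{\grr,0}$; next $Dg=(I+N)^{-1}$ with $N=Df_{(2)}\circ g$ and $\|N\|_{C^0}\le\grs/5\le1/5$, so by the Neumann series $\|Dg_{(2)}\|_{C^0}=\|(I+N)^{-1}-I\|_{C^0}\le\tfrac54\|Df_{(2)}\|_{C^0}$; differentiating once more, using $DN=(D^2f_{(2)}\circ g)(I+Dg_{(2)})$, gives $\|D^2g_{(2)}\|_{C^0}\le c\,\|D^2f_{(2)}\|_{C^0}$. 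The same identities, together with the standard $C^a$ bounds for composition with the ($C^1$--bounded) map $g$ and for the matrix inversion $N\mapsto(I+N)^{-1}$ in the region $\|N\|_{C^0}\le1/5$, and the interpolation inequality of section~5, give $\|g_{(2)}\|_{\grr(1-\grs),a}\le c_a\|f_{(2)}\|_{\grr,a}$ for all $0\le a\le2$; any superlinear dependence on $f_{(2)}$ is harmless since $\|Df_{(2)}\|_{C^0}\le1$, and the resulting $c_a$ depends only on $a$, hence is independent of the domain and the map.

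The hard part will be the covering step $D_{\grr(1-\grs)}\subseteq f(D_\grr)$: one must track precisely the interplay between the bound $|x|<\sqrt2\,\grr$ on $\del D_\grr$, the boundary-separation constant $\hat c<\tfrac52\sqrt2$ from Lemma~2.1, and the threshold $\grs/5$ in the hypothesis, since the needed inequality $(\grs/5)\sqrt2\,\grr<\grr\grs/\hat c$ is what makes the homotopy non-vanishing on $\del D_\grr$ and the constant $5$ is essentially forced by it. Everything afterwards is routine inverse-function-theorem bookkeeping.
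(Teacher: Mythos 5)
Your proof is correct in outline, but it takes a genuinely different route from the paper's for the existence of the inverse, and it is imprecise at exactly the step the paper treats most carefully.

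For existence, the paper does \emph{not} use a degree argument: it fixes $x_*\in D_{\grr(1-\grs)}$ and applies the contraction mapping principle to $x\mapsto w(x)=x_*-f_{(2)}(x)$ on $D_{\grr}$, using the same chain of quantitative inequalities you isolate (the bound $|f_{(2)}(x)|\le\|f_{(2)}\|_{\grr,1}|x|$ via convexity, $|x|\le\sqrt{2}\grr$ on $D_{\grr}$, and $\sqrt{2}\hat c\le5$ from the boundary-separation estimate~(2.6)). The contraction argument simultaneously gives $w(D_\grr)\subseteq D_\grr$ and uniqueness/regularity of the fixed point, so one does not need the Brouwer-degree/homotopy step you use. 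Both routes are valid, and your observation that the threshold $\grs/5$ is essentially forced by the constants is exactly what the paper's computation $\sqrt2\hat c\le2(2+\grg_0)\le5$ shows. Your route requires a little extra bookkeeping (the chain of strict inequalities needed to keep $f_s(\del D_\grr)$ off $y_0$ does hold, since $|x|<\sqrt2\grr$ strictly on $\del D_\grr$ and $\hat c<5/\sqrt2$ strictly, but one must say so).

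Where your argument is genuinely weaker is the passage $\|g_{(2)}\|_{\grr(1-\grs),a}\le c_a\|f_{(2)}\|_{\grr,a}$ with $c_a$ \emph{independent of the domain}. Appealing to the composition estimate~(5.5) and the interpolation inequality~(5.2) cannot yield a domain-independent constant: both carry explicit $\grr^{-}$-factors, and moreover interpolation between the endpoints $a=0$ and $a=2$ only bounds $\|g_{(2)}\|_a$ by $\|f_{(2)}\|_0^{\grl}\|f_{(2)}\|_2^{1-\grl}$, which is not controlled by $\|f_{(2)}\|_a$. The paper avoids both pitfalls by working directly from the identity $dg_{(2)}=\grPh\circ df_{(2)}\circ g$, $\grPh(W)=-W(I+W)^{-1}$, and estimating the two compositions \emph{differently}: the inner one $u=\grPh\circ df_{(2)}$ is H\"older because $\grPh$ is Lipschitz on $\{|W|\le1/2\}$, and the outer one $v=u\circ g$ is H\"older because $g$ is $C^1$-bounded with $1+\|g_{(2)}\|_1$ an absolute constant; neither step uses the $\grr$-weighted estimates. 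Your Neumann-series observation $Dg=(I+N)^{-1}$, $N=Df_{(2)}\circ g$ is the same identity in a different guise, so the fix is to replace the appeal to~(5.5) and~(5.2) with this direct two-step H\"older estimate, as the paper does; otherwise the stated domain-independence of $c_a$ is not obtained.
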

\begin{proof}
The proof follows a standard argument for the inverse function theorem.
We fix $x_{*}\in D_{\grr(1-\grs)}$ and apply the contraction
mapping principle to $x\mapsto w(x) = x_{*}-f_{(2)}(x)$, on the domain
$D_{\grr}$.  Suppose $\|f_{(2)}\|_{\grr,1}\leq\grg\grs$, with $\grg>0$
to be chosen. By the convexity of $D_{\grr}$ and the mean value theorem
on the segment $[0,x]$, we get $|f_{(2)}(x)|\leq\|f_{(2)}\|_{\grr,1}|x|$.
Then $|w(x)-x_{*}|\leq \grg\grs|x|\leq\grg\grs\sqrt{2}\grr$, by (2.3),
for $x\in D_{\grr}$.  By (2.6) this is bounded by
$\sqrt{2}\grg\hat{c}$dist$(\del D_{\grr_{0}(1-\grs)},\del D_{\grr_{0}})$.
But $\sqrt{2}\hat{c}\leq2(2+\grg_{0})\leq5$
Thus $w$ maps $D_{\grr}$ into itself, if $\grg=1/5$.
Similarly for $x_{0}$, $x_{1}$ in  $D_{\grr}$,
$|w(x_{1})-w(x_{0})|\leq \grg\grs|x_{1}-x_{0}|$,
so $w$ is a contraction there. It has a unique fixed point $x=g(x_{*})$,
$f(g(x_{*})) = x_{*}$.  The map $g$ is just as smooth as $f$, by the
standard argument.

Since $f(0)=0$, $df(0)=I$, we have $g(0)=0$, and by the chain rule,
$dg(0)=I$.  Thus $g=I+g_{(2)}$, $g_{(2)}=O(2)$, and the jacobian matrices
further satisfy
\begin{equation}
dg_{(2)}(x) \, = \, \grPh\circ df_{(2)}\circ g(x), \;
\grPh(W) = -W(I+W)^{-1} .
\end{equation}
Since $g_{(2)}(x)$ vanishes for $x=0$, we can bound it in terms of
its first derivatives, as above.  By (4.14)
$|df_{(2)}|\leq1/2$, so (4.16) gives (4.15) for $a=1$.

For the H\"older continuity, let $0<\gra<1$.  We estimate the two
compositions $u=\grPh\circ df_{(2)}$, $v=u\circ g$ differently,
\begin{eqnarray}
  |v(x_{1})-v(x_{0})| & \leq & H_{\gra}(u)|g(x_{1})-g(x_{0})|^{\gra}
 \leq H_{\gra}(u)(1+\|g_{(2)}\|_{1})^{\gra}|x_{1}-x_{0}|^{\gra} ,
 \nonumber \\
 |u(x_{1})-u(x_{0})| & \leq & \|\grPh\|_{1}|df_{(2)}(x_{1})-df_{(2)}(x_{0})|
 \leq \|\grPh\|_{1}H_{\gra}(df_{(2)})|x_{1}-x_{0}|^{\gra}. \nonumber
\end{eqnarray}
Combining gives (4.15) for $0<a<2$.

Finally, taking first partial derivatives in (4.16) and estimating
gives (4.15) for $a=2$, in view of (4.14).
\end{proof}

Next we assume that $f$ (4.7) is the projection of $Z+F+E$,
and that $h_{1}$ is given by (4.9).  We want to compare two of the
domains $D_{\grr_{1}}(h_{1})$ and $D_{\grr}(h)$.
\begin{lemma}
 There is a constant $\grg_{1}>0$ such that the following holds.
 Let $D$, $h$, $\grg_{0}$, $\grr_{0}$, $D_{\grr_{0}}$, and $\grs$  be
 as in lemma 4.1, with $0<\grr_{0}\leq1$ and $0<\grs<1/2$. Let
$\grr\leq\grr_{0}$, and $f$ and $h_{1}$ be as above.   If the map $F$
satisfies
\begin{equation}
    \|F\|_{\grr,1}\leq\grg_{1}\grr\grs ,
\end{equation}
then lemma 4.1 holds, and the functions  $g$, $Z_{1}$,  and $h_{1}$
are defined on a  neighborhood of
$D_{\grr(1-\grs)}\equiv D_{\grr(1-\grs)}(h)$, and
\begin{equation}
D_{\grr(1-2\grs)}(h_{1}) \subseteq D_{\grr(1-\grs)}(h).
\end{equation}
\end{lemma}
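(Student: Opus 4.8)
The plan is to reduce the statement to Lemma 4.1 by showing that the hypothesis $\|F\|_{\grr,1}\leq\grg_1\grr\grs$ forces the projected map $f=I+f_{(2)}$ to satisfy the smallness condition (4.14), namely $\|f_{(2)}\|_{\grr,1}\leq\grs/5$, and then to control the perturbation $h_1-h$ of the defining function so that the inclusion (4.18) of domains follows from the elementary geometric estimates of section 2. First I would recall from (4.5)--(4.6) that the components of $f_{(2)}$ are built from $F_{(2)}^{\gra}$, $F_{(2)}^n$, and $K_n^{\gra}$, $K_n^{''n}$, $K_n^{'n}$ applied to $y^n=|z'|^2+h$. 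Since the first-order Taylor data $K_j$ of $F$ at $0$ and the remainder $F_{(2)}$ are each bounded by $\|F\|_{\grr,1}$ (up to a constant depending only on dimension and on $\grr_0\le1$), and since on $D_{\grr}$ one has $y^n=O(|x|^2)$ with $C^1$-norm controlled via $\|h\|_{C^2}<\grg_0$ and (2.3), I get $\|f_{(2)}\|_{\grr,1}\leq c\,\|F\|_{\grr,1}\leq c\,\grg_1\grr\grs\leq c\,\grg_1\grs$ because $\grr\le\grr_0\le1$. Choosing $\grg_1$ small enough that $c\,\grg_1\leq1/5$ gives (4.14), so Lemma 4.1 applies: $f$ maps a compact smooth $U_{\grr}\subseteq D_{\grr}$ diffeomorphically onto $D_{\grr(1-\grs)}$, and $g=I+g_{(2)}$ with $\|g_{(2)}\|_{\grr(1-\grs),1}\leq c_1\|f_{(2)}\|_{\grr,1}$. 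In particular $g$, and hence $Z_1=Z_*\circ g$ and $h_1$ in (4.9), are defined on a neighborhood of $D_{\grr(1-\grs)}(h)$.

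Next I would estimate $h_1-h$ on $D_{\grr(1-\grs)}(h)$ using the explicit formula (4.10): $h_1-h=(h\circ g-h)+\hat h\circ g+\grd_{\gra\barr\grb}(z^{\gra}g_{(2)}^{\barr\grb}+g_{(2)}^{\gra}z^{\barr\grb}+g_{(2)}^{\gra}g_{(2)}^{\barr\grb})$. Each term is quadratic-type small: $|h\circ g-h|\leq\|h\|_{C^1}\,|g_{(2)}|$, the $\hat h$ term is bounded via $\hat h=\mathrm{Im}(F_{(2)}^n)-K_n^{'n}y^n$ by $c\,\|F\|_{\grr,1}\,|x|$, and the remaining bilinear terms by $c\,(|x|+|g_{(2)}|)\,|g_{(2)}|$. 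Since $g_{(2)}=O(|x|^2)$ with $\|g_{(2)}\|\leq c_1\|f_{(2)}\|_{\grr,1}\leq c\,\grg_1\grs$, I obtain a bound of the form $|h_1(x)-h(x)|\leq c'\,\grg_1\grs\,|x|^2$ on $D_{\grr(1-\grs)}(h)$, and similarly one can arrange $\|h_1\|_{C^2}$ to stay below $\grg_0$ by shrinking $\grg_1$ further, so that the structural hypotheses of section 2 persist for $h_1$.

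The final step is the domain inclusion (4.18). For $x\in D_{\grr(1-2\grs)}(h_1)$ one has $\grps_{h_1}(x)=|x|^2+h_1(x)\leq\grr^2(1-2\grs)^2$; writing $\grps_h(x)=\grps_{h_1}(x)+(h-h_1)(x)$ and using $|h-h_1|\leq c'\grg_1\grs|x|^2$ together with $|x|^2\le\grps_{h_1}(x)/(1-\grg_0)\le 2\grr^2(1-2\grs)^2$ from (2.3), I get $\grps_h(x)\leq\grr^2(1-2\grs)^2+2c'\grg_1\grs\,\grr^2(1-2\grs)^2\leq\grr^2(1-2\grs)^2(1+2c'\grg_1\grs)$. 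Since $(1-2\grs)^2(1+2c'\grg_1\grs)\leq(1-\grs)^2$ for all $0<\grs<1/2$ once $\grg_1$ is small enough (expand: $(1-2\grs)^2\le1-3\grs$ for small $\grs$ and the extra factor contributes $O(\grs)$ with a controllably small constant; more simply, $(1-2\grs)^2\le(1-\grs)^2-2\grs(1-2\grs)$ leaves room), it follows that $\grps_h(x)\le\grr^2(1-\grs)^2$, i.e. $x\in D_{\grr(1-\grs)}(h)$. I expect the main obstacle to be bookkeeping: making the several absorbed constants $c,c',c_1$ depend only on the dimension and on $\grg_0,\grr_0$ (not on $\grr$ or on $F$), so that a single choice of $\grg_1$ simultaneously yields (4.14), keeps $\|h_1\|_{C^2}<\grg_0$, and closes the elementary inequality $(1-2\grs)^2(1+2c'\grg_1\grs)\le(1-\grs)^2$ uniformly in $\grs\in(0,1/2)$.
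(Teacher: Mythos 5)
Your overall strategy matches the paper's: verify $\|f_{(2)}\|_{\grr,1}\le\grs/5$ from the $C^1$-smallness of $F$, invoke Lemma 4.1, and then estimate $h_1-h$ to get the domain inclusion. The gap is in the two central claims of your third paragraph.

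First, the bound $|h_1(x)-h(x)|\le c'\grg_1\grs|x|^2$ does not follow from $\|F\|_{\grr,1}\le\grg_1\grr\grs$. The obstruction is the term $\hat h\circ g$: although $\hat h=\mathrm{Im}(F_{(2)}^n)-K_n^{'n}y^n$ vanishes to second order at $0$, the only size information you have on $F_{(2)}$ is through its $C^1$-norm, which gives a \emph{linear} bound $|\hat h(x)|\le c\|F\|_{\grr,1}|x|$, not a quadratic one. To extract a genuinely quadratic bound with a small constant you would need control of $\|F\|_{\grr,2}$, which is not in the hypothesis. The paper keeps the linear bound, multiplies by $|x|\le\sqrt2\grr$ where needed, and arrives at the \emph{constant} estimate $\grps(x)-\grps_1(x)\le\tilde c\grr\|F\|_{\grr,1}\le\tilde c\grg_1\grr^2\grs$ uniformly on $D_{\grr(1-\grs)}(h)$, which is all that is required. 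With this constant bound the final algebra is simply $(1-2\grs)^2+\tilde c\grg_1\grs\le(1-\grs)^2$, i.e. $3\grs+\tilde c\grg_1\le2$, which holds for $\grs<1/2$ once $\tilde c\grg_1<1/2$.

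Second, and more seriously, your final step invokes $|x|^2\le\grps_{h_1}(x)/(1-\grg_0)$, which is inequality (2.3) applied to $h_1$ and therefore requires $\|h_1\|_{C^2}<\grg_0$. You say this can be "arranged by shrinking $\grg_1$ further," but it cannot: $h_1$ is built from $F$ via composition with $g$ and two derivatives, so $\|h_1\|_{C^2}$ involves $\|F\|_{\grr,2}$ (see Lemma 5.3, which has $\|F\|_{\grr,a}$ with $a\ge2$ on the right), and no amount of shrinking the $C^1$-bound $\grg_1\grr\grs$ controls that. In the paper the preservation of $\|h_j\|_{\grr_j,2}\le\grg_0$ is a separate inductive statement (Lemma 11.2) proved later using higher-order estimates; it is not part of Lemma 4.2 and is not used in its proof. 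Because the paper's domain-inclusion argument only evaluates $\grps$ (built from $h$, not $h_1$) and uses the constant bound on $\grps-\grps_1$, it never needs the geometry of the $h_1$-domains, and so avoids the circularity your version introduces. Replacing your quadratic bound by the paper's constant bound removes both problems at once.
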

\begin{proof}
For the proof note that (4.5), (4.6) give
\begin{eqnarray}
  \|f_{(2)}\|_{\grr,1} & \leq & \|F_{(2)}\|_{\grr,1} +
   |dF(0)|\|y^{n}\|_{\grr,1}   \nonumber \\
    & \leq & 3\|F\|_{\grr,1} + |dF(0)|(3\sqrt{2}\grr
              + \|h\|_{\grr,1})  \\
    & \leq & \|F\|_{\grr,1}(3+3\sqrt{2}+1/2) \leq
    8\|F\|_{\grr,1}. \nonumber
\end{eqnarray}
Thus, if we choose $\grg_{1}<1/45$, then (4.14) and hence lemma 4.1
hold. We apply (4.10) and the Schwarz inequality to get
\begin{equation}
\begin{array}{c}
  \grps(x) - \grps_{1}(x) = h(x) - h_{1}(x)  \leq   \\
 |h(g(x)) - h(x)| + |\hat{h}(g(x))| + (2|x| + |g_{(2)}(x)|)|g_{(2)}(x)|.
\end{array}
\end{equation}

We shall bound the first term on the right in (4.20) by the third
term, as in the proof of lemma 2.1.
For $x\in D_{\grr(1-\grs)}(h)$, the segment
$[x,g(x)]$ lies in $D_{\grr}(h)$.  Thus we have
\begin{eqnarray}
   g_{t}(x) & = & x + tg_{(2)}(x) \mbox{ (def.)} ,\\
  h(g(x)) - h(x) & = & \int_{0}^{1}\del_{j}h(g_{t}(x))g^{j}_{(2)}(x)dt , \\
  | \del_{j}h(g_{t}(x)) | & \leq & \|h\|_{\grr,2}(|x|+|g_{(2)}(x)|) , \\
 |h(g(x)) - h(x)| & \leq & \grg_{0}(|x|+|g_{(2)}(x)|)|g_{(2)}(x)| .
\end{eqnarray}

To estimate the first and third terms on the right of (4.20), we note that
$2|x| + |g_{(2)}(x)| \leq 3|x| + |g(x)| \leq \sqrt{2}(3\grr +\grr)
\leq 4\sqrt{2}$. Also, by (4.15), (4.19)
\begin{equation}
  |g_{(2)}(x)|\leq\|g_{(2)}\|_{\grr(1-\grs),1}|x|
  \leq 9c_{1}\|F\|_{\grr,1}|x|
  \leq 9c_{1}\sqrt{2}\grr\|F\|_{\grr,1}.
\end{equation}

For the second term, note that for $x\in D_{\grr}(h)$, (4.6) gives
\begin{eqnarray}
  |\hat{h}(x)| & \leq & \|F_{(2)}\|_{\grr,1}|x| +
     |dF(0)|(|z'|^{2} + |h(z',x^{n})|) \\  \nonumber
  & \leq & \|F\|_{\grr,1}(2 + \sqrt{2}\grr +
  \|h\|_{\grr,1})|x| \\
  & \leq & 5\|F\|_{\grr,1}|x|. \nonumber
\end{eqnarray}
For $x\in D_{\grr(1-\grs)}(h)$, we have $g(x)\in D_{\grr}(h)$,
so this gives
\begin{equation}
  |\hat{h}(g(x))| \; \; \leq \; \;  5\sqrt{2}\grr\|F\|_{\grr,1}.
\end{equation}

Combining the three terms and using (4.17) gives,
for $x\in D_{\grr(1-\grs)}(h)$,
\begin{equation}
  \grps(x) - \grps_{1}(x) \; \leq \;
  \tilde{c}\grr\|F\|_{\grr,1}
  \; \leq \;  \tilde{c}\grg_{1}\grr^{2}\grs,
\end{equation}
for an absolute constant $\tilde{c}$.

We want to show that $\grps(x)\leq\grr^{2}(1-\grs)^{2}$, if
$\grps_{1}(x)\leq\grr^{2}(1-2\grs)^{2}$.  This comes down to
$3\grs + \tilde{c}\grg_{1}\leq 2$.  With $\grs<1/2$, we require
$\tilde{c}\grg_{1}<1/2$. This proves the lemma.
\end{proof}

By combining the results of sections 3 and 4, we get the main
step, $Z\mapsto Z_{1}$, in our iteration procedure.  Basically, we
shall need to control the new approximate embedding $Z_{1}$, or
equivalently $h_{1}$, and the new error $\barr{\del}_{X_{1}}Z_{1}$.
As lemma 4.2 indicates, the control of $F$ (3.2) will be central to
this process.  This is begun in the next two sections.


\section{H\"older estimates.}
\setcounter{equation}{0}
First we recall some basic results for standard H\"older norms
$\|u\|_{a} = \|u\|_{C^{a}(D_{\grr})}$, $0\leq a < \infty$, for
functions $u$ on a smooth, bounded, strictly convex domain $D_{\grr}$,
with in-radius comparable to $\grr$,
\begin{equation}
  B(\gre_{1}\grr)\subseteq D_{\grr} \subseteq B(\gre_{2}\grr)\subset
  \mathbf{R}^{n}, \; 0 < \gre_{1} < \gre_{2} .
\end{equation}
For reference, see the appendices of [10] or [6], which we follow with
some adaptation. Next we take $D_{\grr}= D_{\grr}(h)$, setting
$\|u\|_{\grr,a} = \|u\|_{a}$, and make estimates for the new function
$h_{1}$.  From now on we shall assume $0 <\grr \leq 1$.

Perhaps most basic is the interpolation estimate, where $a < c < b$,
$0 < \grl < 1$, $c = \grl a + (1-\grl) b$,
\begin{equation}
   \|u\|_{c} \; \leq \; c_{a}\grr^{-c}\|u\|_{a}^{\grl}\|u\|_{b}^{1-\grl}.
\end{equation}
From this one derives the convexity estimate, for functions $u$, $v$
on perhaps different domains $D_{\grr}$, $D_{\grt}$ ,
\begin{equation}
   \|u\|_{a}\|v\|_{b} \; \leq \; c_{a,b}\grr^{-a}\grt^{-b}
  (\|u\|_{a_{1}}\|v\|_{b_{1}} + \|u\|_{a_{2}}\|v\|_{b_{2}} ),
\end{equation}
where $(a,b)=\grl(a_{1},b_{1}) + (1-\grl)(a_{2},b_{2})$, $0<\grl<1$.

The product-rule estimate for $u,v$ on $D_{\grr}$ is
\begin{equation}
  \|uv\|_{a} \; \leq \; c_{a}\grr^{-a} (\|u\|_{a}\|v\|_{0} +
  \|u\|_{0}\|v\|_{a}).
\end{equation}
These estimates can be derived as in [10], with constants independent
of $\grr$, but using scale invariant norms, and then passing back to
standard norms. (To get \lq\lq scale invariant \rq\rq\, norms, multiply
the sup norm of the $j$-th order derivatives by $\grr^{j}$, $0< j\leq k$,
and the $\gra$-H\"older ratio of the $k$-th order derivatives by
$\grr^{k+\gra}$, and add together.)

As indicated in [10], there are two ways to make a chain-rule estimate.
Our first is somewhat weaker than the corresponding one in [10], but
in a more precise form.  For maps $g:D_{\grr}\rightarrow D_{\grt}$ and
$u:D_{\grt}\rightarrow \mathbf{R}^{N}$, $0<\grr,\grt\leq 1$, we have
\begin{eqnarray}
  \|u\circ g\|_{a} & \leq & K_{a}(\|u\|_{a} + \|u\|_{1}\|g\|_{a}), \\
  K_{a} & = & c_{a}\grt^{-2a}\grr^{-a^{2}}(1+\|g\|_{1})^{2a}. \nonumber
\end{eqnarray}
In case $D_{\grt}= B(1/2)$ and $u$ is a fixed rational or analytic
function on $B(1)$, (specifically the matrix inverse $u(W) =
(I+W)^{-1}$), we have
\begin{eqnarray}
  \|u\circ g\|_{a} & \leq &  K_{a}\|g\|_{a} , \\
 K_{a} & = & c_{a}\grr^{-a}(1+\|g\|_{0})^{a-1}. \nonumber
\end{eqnarray}

We augment lemma 4.1 with the following. It is a refinement of the
inverse mapping estimate of [10].  See appendix A of [6].
\begin{lemma}
  With the hypothesis of lemma 4.1, we also have
\begin{equation}
  \|g_{(2)}\|_{\grr(1-\grs),a} \; \leq \;
                    c_{a}\grr^{-4(a+2)}\|f_{(2)}\|_{\grr,a},
\end{equation}
for $0\leq a < \infty$. For $0\leq a \leq 2$ there is no $\grr$-factor,
by (4.15).
\end{lemma}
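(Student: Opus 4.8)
The plan is to iterate the composition estimate (5.6) and the contraction argument of lemma 4.1, tracking the $\grr$-loss carefully. The starting point is formula (4.16), $dg_{(2)} = \grPh\circ df_{(2)}\circ g$ with $\grPh(W)=-W(I+W)^{-1}$, together with $|df_{(2)}|\leq 1/2$ on $D_{\grr}$, which we already have from (4.14). Since $g_{(2)}$ vanishes at $0$, I would reduce the $C^{a}$-estimate of $g_{(2)}$ on $D_{\grr(1-\grs)}$ to a $C^{a-1}$-estimate of $dg_{(2)}$, i.e. of the composite $v = u\circ g$ where $u = \grPh\circ df_{(2)}$. The two applications of the chain rule are: first, estimate $u = \grPh\circ df_{(2)}$ on $D_{\grr}$ using (5.7) (with $u$ the fixed analytic map $W\mapsto -W(I+W)^{-1}$, legitimate since $|df_{(2)}|\leq 1/2 < 1$), obtaining $\|u\|_{\grr,b}\leq c_{b}\grr^{-b}\|df_{(2)}\|_{\grr,b}\leq c_{b}\grr^{-b}\grr^{-1}\|f_{(2)}\|_{\grr,b+1}$ after passing a derivative; second, estimate $v = u\circ g$ on $D_{\grr(1-\grs)}$ using (5.6), which produces the factor $K_{b} = c_{b}\grr^{-2b}\grr^{-b^{2}}(1+\|g\|_{1})^{2b}$. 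Because $\|g\|_{1}\leq 1 + \|g_{(2)}\|_{1}\leq 2$ by the $a=1$ case of (4.15), the term $(1+\|g\|_{1})^{2b}$ is an absolute constant, so $K_{b}$ contributes only a power $\grr^{-2b-b^{2}}$.

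Assembling, with $b = a-1$ in the role of the $C^{b}$-norm of $dg_{(2)}$, one gets a bound of the shape $\|g_{(2)}\|_{\grr(1-\grs),a}\leq c_{a}\grr^{-N(a)}\|f_{(2)}\|_{\grr,a}$ with exponent $N(a)$ coming from summing $b$, $1$, $2b$, $b^{2}$ and the $b\mapsto a$ interpolation step needed to re-express $\|df_{(2)}\|_{\grr,a-1}$ in terms of $\|f_{(2)}\|_{\grr,a}$. A crude accounting gives $N(a)\leq 4(a+2)$, which is exactly the claimed exponent; since we only need an upper bound on the $\grr$-loss I would not try to optimize $N(a)$, merely verify $N(a)\leq 4(a+2)$ term by term. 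One small subtlety: in (5.6) the map $u$ lives on $D_{\grr}$ while $g$ maps $D_{\grr(1-\grs)}$ into $D_{\grr}$, so the ``$\grt$'' in (5.6) is $\grr$ and the ``$\grr$'' is $\grr(1-\grs)$; since $(1-\grs)^{-1}\leq 2$ this costs only an absolute constant and can be absorbed. The last sentence of the statement is then immediate: for $0\leq a\leq 2$ the sharper bound (4.15) of lemma 4.1, with absolute constant $c_{a}$ and no $\grr$-power, already applies.

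The main obstacle I anticipate is bookkeeping rather than anything conceptual: one must be scrupulous about which domain each norm is taken over when chaining (5.6) and (5.7), about the extra derivative lost when going from $g_{(2)}$ to $dg_{(2)}$ and from $f_{(2)}$ to $df_{(2)}$, and about the interpolation (5.2) used to trade a $C^{a-1}$ norm against a $C^{a}$ norm (each such trade is another source of a negative power of $\grr$). Keeping the exponent inside $4(a+2)$ forces one to be slightly economical — for instance, to use that $|df_{(2)}|\leq 1/2$ makes $(1+\|g\|_{0})^{a-1}$ and $(1+\|g\|_{1})^{2a}$ harmless absolute constants rather than quantities with hidden $\grr$-dependence. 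Once these are pinned down the estimate falls out by combining (5.6), (5.7), (5.2) and (4.15).
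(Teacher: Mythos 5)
Your strategy --- iterate the identity (4.16), $dg_{(2)}=\grPh\circ df_{(2)}\circ g$, by composing the fixed-map chain rule (5.6) with the general chain rule (5.5) --- is the natural one, but the exponent accounting does not close, and the paper itself gives no proof here: the sentence preceding the lemma refers the reader to Appendix A of [6], precisely because a sharper inverse-mapping estimate than what (5.5) provides is required. The obstruction is the factor $\grr^{-a^{2}}$ in the constant $K_{a}$ of (5.5). In your second composition step, $v=u\circ g$ with $g:D_{\grr(1-\grs)}\to D_{\grr}$, the ``$\grr$'' of (5.5) is the radius $\grr(1-\grs)$ of the domain of $g$; applied at level $a-1$ the constant is $K_{a-1}\sim c_{a-1}\,\grr^{-2(a-1)}[\grr(1-\grs)]^{-(a-1)^{2}}$, and after absorbing $(1-\grs)^{-1}$ (for $\grs\le 1/2$) and $(1+\|g\|_{1})^{2(a-1)}$ into the constant, the leading $\grr$-exponent coming from $K_{a-1}\|u\|_{\grr,a-1}$ is
\begin{equation*}
2(a-1)+(a-1)^{2}+(a-1) \;=\; (a-1)(a+2),
\end{equation*}
which is \emph{quadratic} in $a$ and exceeds $4(a+2)$ as soon as $a>5$. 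So a direct composition of (5.5) and (5.6) cannot yield the linear exponent $4(a+2)$ of (5.7); your claim that ``a crude accounting gives $N(a)\le 4(a+2)$'' is false, and that is exactly why the paper has to appeal to the refined estimate of [6, App.\ A].

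Two secondary gaps. First, the term $\|u\|_{1}\|g\|_{a-1}$ produced by (5.5) involves $\|g_{(2)}\|_{\grr(1-\grs),a-1}$, which is the very quantity being estimated, so your argument is circular as written: you would need an explicit induction on $a$ with the base case $a\le 2$ supplied by (4.15), which you do not set up. Second, your equation labels are transposed --- where you cite ``(5.7)'' for the analytic-map composition $\grPh\circ df_{(2)}$ you mean (5.6), and where you cite ``(5.6)'' for the outer composition $u\circ g$ you mean (5.5); the bounds you then quote are indeed those of (5.6) and (5.5), so this is only a labeling slip, but it obscures which estimate is responsible for the $\grr^{-a^{2}}$ loss that sinks the argument.
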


In lemma 4.2, $f_{(2)}$ is given by (4.5), (4.6) and (4.2), (1.5), and
we assume $\|h\|_{\grr,2}\leq\grg_{0}<1/2$, and
$\|F\|_{\grr,1}\leq \grg_{1}\grr\grs <1/2$.  Then as in (4.19) we have
\begin{equation}
  \|f_{(2)}\|_{\grr,a} \; \leq \; c_{a}(\|F\|_{\grr,a} +
   \|F\|_{\grr,1}\|h\|_{\grr,a}), \; \; 1\leq a <\infty.
\end{equation}

\begin{lemma}
    Let the hypotheses be as in lemma
  4.2.  For $\grr_{1}=\grr(1-\grs)$, we have
  (the domains are $D_{\grr_{1}}(h)$ and $D_{\grr}(h)$)
\begin{eqnarray}
  \|h_{1}\|_{\grr_{1},a} & \leq &
       c_{a}\grr^{-a_{2}}(\|h\|_{\grr,a} + \|F\|_{\grr,a}),\\
  \|h_{1}-h\|_{\grr_{1},a} & \leq & c_{a}\grr^{-a_{2}}
    (\|F\|_{\grr,1}\|h\|_{\grr,a+1}+ \|F\|_{\grr,a}),
\end{eqnarray}
for $1\leq a <\infty$, where $a_{2}$ is quadratic in $a$.
\end{lemma}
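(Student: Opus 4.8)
The plan is to estimate $h_1$ directly from its defining formula (4.9), resp. the difference formula (4.10), using the chain-rule estimate (5.6), the product-rule estimate (5.4), the convexity estimate (5.3), and the inverse-map estimate (5.8) of Lemma 5.1, together with the reductions (5.9) that convert $\|f_{(2)}\|_{\grr,a}$ and hence $\|g_{(2)}\|_{\grr_1,a}$ into expressions in $\|F\|_{\grr,a}$ and $\|h\|_{\grr,a}$. First I would record, from Lemma 5.1 combined with (5.9), a bound of the shape $\|g_{(2)}\|_{\grr_1,a}\leq c_a\grr^{-p(a)}(\|F\|_{\grr,a}+\|F\|_{\grr,1}\|h\|_{\grr,a})$ with $p(a)$ quadratic in $a$; since $\|F\|_{\grr,1}\leq\grg_1\grr\grs<1/2$ and $\|h\|_{\grr,2}<1/2$, the lower-order pieces $\|g_{(2)}\|_{\grr_1,1}$, $\|g_{(2)}\|_{\grr_1,0}$ are all $O(\grr)$-small, which is what makes the chain rule usable (the hypotheses $D_\grt=B(1/2)$ type conditions, and $\|g\|_1$ bounded, are met after the renormalization set up in Lemma 4.2).

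For (5.10): apply (5.6) to $h\circ g$ to get $\|h\circ g\|_{\grr_1,a}\leq K_a(\|h\|_{\grr,a}+\|h\|_{\grr,1}\|g\|_{\grr_1,a})$, where $K_a$ carries only $\grr$-powers and a bounded factor $(1+\|g\|_1)^{2a}$; absorb $\|g\|_{\grr_1,a}=\|I+g_{(2)}\|_{\grr_1,a}$ via the $g_{(2)}$-bound above, and note $\|h\|_{\grr,1}\leq\grg_0<1$. The term $\hat h\circ g$ is handled the same way, using $\hat h=\mathrm{Im}(F^n_{(2)})-K'^n_n y^n$ from (4.6), so that $\|\hat h\|_{\grr,a}\leq c_a\|F\|_{\grr,a}$ (as in (4.27), now in $C^a$). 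The remaining three quadratic terms $\grd_{\gra\barr\grb}(z^\gra g^{\barr\grb}_{(2)}+\cdots)$ in (4.9) are controlled by the product rule (5.4) and the $g_{(2)}$-estimate, each contributing $\leq c_a\grr^{-a}\|g_{(2)}\|_{\grr_1,a}$ up to lower order, hence $\leq c_a\grr^{-a_2}(\|h\|_{\grr,a}+\|F\|_{\grr,a})$. Collecting all $\grr$-exponents into a single $a_2$ quadratic in $a$ gives (5.10). For (5.11) I would instead start from (4.10): write $h\circ g-h=\int_0^1 \del_j h(g_t(x))\,g^j_{(2)}(x)\,dt$ as in (4.21)–(4.24), so the $C^a$ norm of this term is bounded, via the product rule and the chain-rule estimate applied to $\del h\circ g_t$, by $c_a\grr^{-a_2}(\|g_{(2)}\|_{\grr_1,a}\|h\|_{\grr,1}+\|g_{(2)}\|_{\grr_1,1}\|h\|_{\grr,a+1})$; since $\|g_{(2)}\|_{\grr_1,1}\leq c_1\|f_{(2)}\|_{\grr,1}\leq c_1\|F\|_{\grr,1}$ (the one extra derivative on $h$ is exactly where the $\|h\|_{\grr,a+1}$ appears), and $\|g_{(2)}\|_{\grr_1,a}$ is itself $\leq c_a\grr^{-p}(\|F\|_{\grr,a}+\|F\|_{\grr,1}\|h\|_{\grr,a})$, this folds into the right side of (5.11). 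Then $\hat h\circ g$ and the three quadratic terms are bounded exactly as before, all by $c_a\grr^{-a_2}(\|F\|_{\grr,1}\|h\|_{\grr,a+1}+\|F\|_{\grr,a})$, using that a quadratic term which is $O(2)$ and built from $g_{(2)}=O(2)$ always carries at least one small factor $\|g_{(2)}\|_{\grr_1,1}\lesssim\|F\|_{\grr,1}$.

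The main obstacle is bookkeeping rather than ideas: keeping the $\grr$-exponents under control through the nested compositions. Each application of (5.6) contributes $\grt^{-2a}\grr^{-a^2}$, and (5.8) contributes $\grr^{-4(a+2)}$, so a naive composition of chain rule with inverse-map estimate produces an exponent that is quadratic in $a$ — which is acceptable since the statement only claims "$a_2$ quadratic in $a$" — but one must check that no step forces a worse-than-quadratic growth, in particular that $g_{(2)}$ is never composed with itself more than a bounded number of times and that the $(1+\|g\|_1)^{2a}$ factors stay bounded because $\|g\|_1\leq 1+\|g_{(2)}\|_{\grr_1,1}\leq 3/2$. A secondary point requiring care is that in (5.11) the single extra derivative on $h$ must be attached only to the linear-in-$g_{(2)}$ part of $h\circ g-h$, so that the coefficient of $\|h\|_{\grr,a+1}$ is genuinely the small quantity $\|F\|_{\grr,1}$ and not merely $\|F\|_{\grr,a}$; this is ensured by the mean-value representation (4.22) and by treating the $\hat h$ and quadratic terms (which involve no derivative of $h$ beyond order $0$ or $2$, hence at most $\|h\|_{\grr,2}<1/2$) separately from $h\circ g-h$.
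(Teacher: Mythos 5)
Your proposal is correct and follows essentially the same route as the paper: start from the explicit formulas (4.9) and (4.10) for $h_1$ and $h_1-h$, apply the chain-rule estimate (5.5), the product-rule estimate (5.4), and the inverse-map estimate (5.7) together with (5.8) to convert everything into $\|F\|_{\grr,\cdot}$ and $\|h\|_{\grr,\cdot}$, and for the difference use the mean-value representation (4.21)--(4.22) so that the $\|h\|_{\grr,a+1}$ factor is paired with the small coefficient $\|g_{(2)}\|_{\grr_1,0}\lesssim\|F\|_{\grr,1}$ while the large $\|g_{(2)}\|_{\grr_1,a}$ factor is paired with $\|h\|_{\grr,2}\leq 1$. (Your equation-number references are shifted by one --- e.g.\ you cite (5.6) for the general chain rule, which in the paper is (5.5) --- but the content you describe matches the paper's lemmas.)
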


\begin{proof}
To prove (5.9), we use the chain-rule and product-rule estimates in (4.9)
to get
\begin{eqnarray}
  \|h_{1}\|_{\grr_{1},a} & \leq & c_{a}\grr^{-a_{2}} ( \|h+\hat{h}\|_{\grr,a} +
  \|h+\hat{h}\|_{\grr,1} \|g_{(2)}\|_{\grr_{1},a} ) + \\ &  &  \nonumber
  + (2 +  c_{a}\grr^{-a}\|g_{(2)}\|_{\grr_{1},0})\|g_{(2)}\|_{\grr_{1},a} .
\end{eqnarray}
From (4.6) and (1.4), (1.5) we get
\begin{equation}
  \|\hat{h}\|_{\grr,a} \; \leq \;
   c_{a}(\|F\|_{\grr,a} + \|F\|_{\grr,1}\|h\|_{\grr,a}).
\end{equation}
Now we apply (5.7), (5.8) and use $\|F\|_{\grr,1}\leq 1$.  After
simplifying, we get (5.9) (with $a_{2}= a^{2}+7a+8$).

To prove (5.10) we estimate the first term in (4.10) using calculus,
as in (4.21)-(4.22), and the product rule estimate, getting (with
norms over the appropriate domains)
\begin{eqnarray}
 \|h\circ g - h\|_{a} & \leq & c_{a}\grr^{-a} \int_{0}^{1}
   \{ \|Dh\circ g_{t}\|_{a}\|g_{(2)}\|_{0} +
        \|Dh\circ g_{t}\|_{0} \|g_{(2)}\|_{a} \}  \\
       & \leq & c_{a}\grr^{-a-a^{2}} \{\|h\|_{a+1}\|g_{(2)}\|_{0} +
               \|h\|_{2}\|g_{(2)}\|_{a} \}, \nonumber
\end{eqnarray}
and we use $\|h\|_{2}\leq1$.  The other terms are treated as in the
proof of (5.9), with $h$ dropped from the expression $(h+\hat{h})$.
After simplifying, we get (5.10).
\end{proof}

\section{A general estimate for the new error.}
\setcounter{equation}{0}
We derive a \lq\lq general\rq\rq\ estimate for the new error
$\barr{\del}_{X^{1}}Z_{1}$ in terms of the previous embedding and the function
$F$.  This will be specialized to give coarse and fine estimates for the
new error. We assume that our vector fields $X_{\gra}$ are in the
H\"older class $C^{m}$, $m\in\mathbf{R}$, $m\geq2$.

We first estimate the $X_{\gra}$ derivative of a function $u$. This is
straight forward using the product-rule estimate twice, and (1.8), (1.9).
\begin{lemma}
  For $0\leq a \leq m$, we have, on a fixed domain,
\begin{equation}
\begin{array}{rcl}
  \|X_{\barr{\gra}}u\|_{a} & \leq & K'_{a}(\|u\|_{a+1} +
   (\|\barr{\del}_{X}Z\|_{a} + \|h\|_{a+1}) \|u\|_{1}), \\
  \|X_{\barr{\gra}}u - Y_{\barr{\gra}}u\|_{a} & \leq &
      K'_{a}(\|\barr{\del}_{X}Z\|_{0}\|u\|_{a+1} +
      (\|\barr{\del}_{X}Z\|_{0}\|h\|_{a+1} + \|\barr{\del}_{X}Z\|_{a})
          \|u\|_{1}), \\
    K'_{a} & = & c_{a}\grr^{-2a}(1+\|\barr{\del}_{X}Z\|_{0}).
\end{array}
\end{equation}
\end{lemma}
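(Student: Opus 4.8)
The plan is to reduce everything to the frame decomposition (1.8) and its complex conjugate, and then to apply the product-rule estimate (5.4) together with the coefficient bounds furnished by (1.9). Conjugating (1.8) and using $A_{\barr{\gra}}^{\;\grb}=\barr{A_{\gra}^{\;\barr{\grb}}}$, $B_{\barr{\gra}}=\barr{B_{\gra}}$, and the reality of $\del_{x^{n}}$, one gets
\[
   X_{\barr{\gra}}u \;=\; Y_{\barr{\gra}}u \;+\; A_{\barr{\gra}}^{\;\grb}Y_{\grb}u \;+\; B_{\barr{\gra}}\del_{x^{n}}u ,
\]
so that $X_{\barr{\gra}}u-Y_{\barr{\gra}}u=A_{\barr{\gra}}^{\;\grb}Y_{\grb}u+B_{\barr{\gra}}\del_{x^{n}}u$. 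It therefore suffices to (i) estimate the first-order operators $Y_{\grb}$ and $\del_{x^{n}}$ applied to $u$, (ii) estimate the coefficients $A_{\barr{\gra}}^{\;\grb}$ and $B_{\barr{\gra}}$ in $C^{a}$-norm in terms of $\barr{\del}_{X}Z$ and $h$, and (iii) multiply out using (5.4). The two asserted inequalities then come from the same computation: the first by keeping the term $Y_{\barr{\gra}}u$, the second by dropping it, so that every surviving summand carries a factor $\|A_{\barr{\gra}}^{\;\grb}\|$ or $\|B_{\barr{\gra}}\|$, hence a factor of $\|\barr{\del}_{X}Z\|$.

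For (i), write $Y_{\grb}=\del_{\grb}-(r_{\grb}/2r_{n})\del_{x^{n}}$ as in (1.7); then $\|\del_{\grb}u\|_{a}\le\|u\|_{a+1}$, and the remaining term is handled by (5.4) once $r_{\grb}/2r_{n}$ is estimated. Since $r=-y^{n}+|z'|^{2}+h$ gives $r_{\grb}=z^{\barr{\grb}}+h_{\grb}$ (with $h_{\grb}$ a first derivative of $h$) and $2r_{n}=i+\del_{x^{n}}h$, and since the standing hypothesis $\|h\|_{\grr,2}<\grg_{0}<1/2$ keeps $2r_{n}$ uniformly away from $0$, the fixed-function chain-rule estimate (5.7) applied to $(I+W)^{-1}$ bounds $\|(2r_{n})^{-1}\|_{a}$, and (5.4) then gives $\|r_{\grb}/2r_{n}\|_{a}\le c_{a}\grr^{-2a}(1+\|h\|_{a+1})$. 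A further use of (5.4) on $(r_{\grb}/2r_{n})\del_{x^{n}}u$ yields $\|Y_{\grb}u\|_{a}\le c_{a}\grr^{-2a}(\|u\|_{a+1}+\|h\|_{a+1}\|u\|_{1})$, and in particular $\|Y_{\grb}u\|_{0}\le c\|u\|_{1}$; the same bounds hold for $Y_{\barr{\gra}}u$, and trivially for $\del_{x^{n}}u$.

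For (ii), (1.9) identifies $A_{\barr{\gra}}^{\;\grb}=X_{\barr{\gra}}z^{\grb}$ as a component of $\barr{\del}_{X}Z$, so $\|A_{\barr{\gra}}^{\;\grb}\|_{a}\le\|\barr{\del}_{X}Z\|_{a}$ and $\|A_{\barr{\gra}}^{\;\grb}\|_{0}\le\|\barr{\del}_{X}Z\|_{0}$; solving the second relation of (1.9) for $B_{\barr{\gra}}=(1+i\del_{x^{n}}h)^{-1}(X_{\barr{\gra}}z^{n}+A_{\barr{\gra}}^{\;\grb}r_{\grb}/r_{n})$ and estimating as above (using (5.7) for $(1+i\del_{x^{n}}h)^{-1}$, whose argument stays within $1/2$ of $1$) gives $\|B_{\barr{\gra}}\|_{a}\le c_{a}\grr^{-2a}(\|\barr{\del}_{X}Z\|_{a}+\|\barr{\del}_{X}Z\|_{0}\|h\|_{a+1})$ and $\|B_{\barr{\gra}}\|_{0}\le c\|\barr{\del}_{X}Z\|_{0}$. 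Then for (iii) one applies (5.4) to $A_{\barr{\gra}}^{\;\grb}Y_{\grb}u$ and to $B_{\barr{\gra}}\del_{x^{n}}u$, splitting each $C^{a}$-norm into its ``$C^{a}$-coefficient $\times$ $C^{0}$-derivative'' and ``$C^{0}$-coefficient $\times$ $C^{a}$-derivative'' parts, substitutes the bounds from (i) and (ii), collects the negative powers of $\grr$ into $\grr^{-2a}$, absorbs the $(1+\|\barr{\del}_{X}Z\|_{0})$ factors from the product-rule constants, and reads off $K'_{a}=c_{a}\grr^{-2a}(1+\|\barr{\del}_{X}Z\|_{0})$ and the two displayed inequalities.

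The work is purely bookkeeping, and the main obstacle is the $\grr$-power accounting: (5.7) already costs a factor $\grr^{-a}$ and each application of (5.4) costs another $\grr^{-a}$, so the computation must be organized --- most cleanly by running it in the scale-invariant norms of section~5 and passing to standard norms only at the end --- so that these powers do not pile up beyond $\grr^{-2a}$. The secondary point is to check, in the difference estimate, that every summand inherits the weaker factor $\|\barr{\del}_{X}Z\|_{0}$, except the one coming from the ``$C^{a}$-coefficient $\times$ $C^{0}$-derivative'' piece of $A_{\barr{\gra}}^{\;\grb}Y_{\grb}u$, which is exactly what produces the $\|\barr{\del}_{X}Z\|_{a}\|u\|_{1}$ term. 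The hypothesis $\|h\|_{\grr,2}<\grg_{0}$ is used only to keep the inverses $(2r_{n})^{-1}$ and $(1+i\del_{x^{n}}h)^{-1}$ uniformly controlled.
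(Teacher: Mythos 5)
Your decomposition $X_{\barr{\gra}}u-Y_{\barr{\gra}}u=A_{\barr{\gra}}^{\;\grb}Y_{\grb}u+B_{\barr{\gra}}\del_{x^{n}}u$ via the conjugate of (1.8), the identification of $A_{\barr{\gra}}^{\;\grb}$ and (after inverting $(1+i\del_{x^{n}}h)$) $B_{\barr{\gra}}$ as controlled by $\barr{\del}_{X}Z$ and $h$ through (1.9), and the final assembly by (5.4) are exactly what the paper means by ``straight forward using the product-rule estimate twice, and (1.8), (1.9)''; the paper gives no more detail than that, so your write-up is a faithful expansion of the same argument. One small slip: the fixed-analytic-function chain-rule estimate you invoke for $(I+W)^{-1}$, and hence for $(2r_{n})^{-1}$ and $(1+i\del_{x^{n}}h)^{-1}$, is (5.6), not (5.7) --- (5.7) is the inverse-map bound of Lemma~5.1 and is not what is needed here. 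Your observation that the literal power $\grr^{-2a}$ requires organizing the estimate in scale-invariant form (or simply absorbing a slightly higher power, since $K'_{a}$ only ever feeds into the generic $c_{a}\grr^{-a_{2}}$ of Lemmas~6.2 and 9.x) is a fair reading of how the paper actually uses this constant.
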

The first estimate of the lemma is applied to get
\begin{eqnarray}
  \|X_{\gra}f^{\grb}_{(2)}\|_{\grr,0} & \leq &
  c_{0}(1+ \|\barr{\del}_{X}Z\|_{\grr,0})^{2}\|f_{(2)}\|_{\grr,1}  \\
  & \leq & 3c_{0}\|f_{(2)}\|_{\grr,1}, \nonumber
\end{eqnarray}
if we assume $\|\barr{\del}_{X}Z\|_{\grr,0}\leq 1/2$, say.  Then, if
$\|f_{(2)}\|_{\grr,1}$ is sufficiently small, the inverse matrix in
(4.11) will exist and (4.12) will be valid.

We have the following general estimate.
\begin{lemma}
 Let the hypotheses of lemma 4.2 hold,
 $\|\barr{\del}_{X}Z\|_{\grr,1}\leq 1/2$, and $\grr_{1}=\grr(1-\grs)$.
Then
\begin{equation}
\begin{array}{rcl}
  \|\barr{\del}_{X^{1}}Z_{1}\|_{\grr_{1},a} & \leq & c_{a}\grr^{-a_{2}} \{
  \|\barr{\del}_{X}(Z+F)\|_{\grr,a} +
       \|F\|_{\grr,1}\|\barr{\del}_{X}Z\|_{\grr,a} + \\
  &  & +(\|\barr{\del}_{X}Z\|_{\grr,0} +\|F\|_{\grr,1})(\|F\|_{\grr,a+1} +
      \|F\|_{\grr,1}\|h\|_{\grr,a+1}) + \\
  &  & + \|\barr{\del}_{X}Z\|_{\grr,1}(\|F\|_{\grr,a} +
      \|F\|_{\grr,1}\|h\|_{\grr,a}) \},
\end{array}
\end{equation}
for $1\leq a \leq m$.
\end{lemma}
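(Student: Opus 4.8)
The plan is to start from the formula (4.13) for $\barr{\del}_{X^{1}}Z_{1}$, namely $X^{1}_{\barr{\gra}}Z_{1}=(C^{\;\barr{\grb}}_{\barr{\gra}}W_{\barr{\grb}})\circ g$, and estimate each of the three factors --- the matrix $C$, the vector $W_{\barr{\grb}}$, and the composition with $g$ --- separately in $C^{a}(D_{\grr_{1}})$. The composition with $g$ is handled by the chain-rule estimate (5.5); since $g=I+g_{(2)}$ with $\|g_{(2)}\|_{1}$ bounded (by lemma 4.1, using (4.17)), the factor $K_{a}$ there is $c_{a}\grr^{-a^{2}}$ up to constants, which absorbs into the claimed $\grr^{-a_{2}}$ with $a_{2}$ quadratic in $a$. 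For the matrix $C^{\;\barr{\grb}}_{\barr{\gra}}$ from (4.11), I would use the rational chain-rule estimate (5.6) on the matrix inverse $(I+W)^{-1}$ together with the product rule (5.4) and lemma 6.1 applied to $X_{\gra}f^{\grb}_{(2)}$: by (6.2) the $C^{0}$ norm of $X_{\gra}f^{\grb}_{(2)}$ is small, so the inverse exists, and the $C^{a}$ norm of $C-I$ is $\lesssim \grr^{-a_{2}}(\|f_{(2)}\|_{\grr,a+1}+\text{lower order})$, which via (5.8) becomes $\lesssim\grr^{-a_{2}}(\|F\|_{\grr,a+1}+\|F\|_{\grr,1}\|h\|_{\grr,a+1})$ --- matching the second displayed line of (6.4).

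The heart of the estimate is the vector $W_{\barr{\grb}}$, given by (4.12): it equals $X_{\barr{\grb}}[Z+F]-X_{\barr{\grb}}[Z+F](0)-\del_{\gra}F(0)X_{\barr{\grb}}z^{\gra}-\del_{x^{n}}F(0)X_{\barr{\grb}}z^{n}$. The term $X_{\barr{\grb}}(Z+F)$ is estimated directly by lemma 6.1's first inequality with $u=Z+F$, giving $\|X_{\barr{\grb}}(Z+F)\|_{\grr,a}\lesssim\grr^{-2a}(\|\barr{\del}_{X}(Z+F)\|_{\grr,a}+(\|\barr{\del}_{X}Z\|_{\grr,a}+\|h\|_{\grr,a+1})\|Z+F\|_{\grr,1})$; here one uses that $\barr{\del}_{X}Z$ itself controls $X_{\barr{\grb}}(Z+F)$ up to the $F$ correction, and that $\|Z+F\|_{\grr,1}$ is bounded (since $Z$ is a normalized embedding and $\|F\|_{\grr,1}$ is small). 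The constant terms $\del_{\gra}F(0)$ and $\del_{x^{n}}F(0)$ are bounded by $\|F\|_{\grr,1}$, and they multiply $X_{\barr{\grb}}z^{\gra}$, $X_{\barr{\grb}}z^{n}$, whose $C^{a}$ norms are $\lesssim\grr^{-2a}(\|\barr{\del}_{X}Z\|_{\grr,a}+\cdots)$ again by lemma 6.1 (these are just the coefficients $A_{\barr{\gra}}^{\;\grb},A_{\barr{\gra}}^{\;n}$, i.e.\ components of $\barr{\del}_{X}Z$). So $\|F\|_{\grr,1}$ times these gives the $\|F\|_{\grr,1}\|\barr{\del}_{X}Z\|_{\grr,a}$ term in (6.4), plus cross terms absorbed into the last line. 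The subtraction of the value at $0$ does not hurt the H\"older estimate (it only removes a constant), and crucially the combination is designed so that $W_{\barr{\grb}}(0)=0$, which is consistent with but not needed for the norm bound.

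Finally I would assemble the pieces using the product-rule estimate (5.4) to bound $\|C\cdot W\|_{\grr_{1},a}\lesssim\grr^{-a}(\|C\|_{\grr_{1},a}\|W\|_{\grr_{1},0}+\|C\|_{\grr_{1},0}\|W\|_{\grr_{1},a})$, then compose with $g$ via (5.5), and collect all the $\grr$-powers into a single exponent $a_{2}$ that is quadratic in $a$ (the precise value is irrelevant, as in lemma 5.3). The terms group exactly as in the three lines of (6.4): the main term $\|\barr{\del}_{X}(Z+F)\|_{\grr,a}$ from the leading part of $W$, the term $\|F\|_{\grr,1}\|\barr{\del}_{X}Z\|_{\grr,a}$ from $C-I$ acting on the leading error and from the constant corrections, the term $(\|\barr{\del}_{X}Z\|_{\grr,0}+\|F\|_{\grr,1})(\|F\|_{\grr,a+1}+\|F\|_{\grr,1}\|h\|_{\grr,a+1})$ from the product of a small zeroth-order factor with the high-norm part of $C$ and $f_{(2)}$, and the term $\|\barr{\del}_{X}Z\|_{\grr,1}(\|F\|_{\grr,a}+\|F\|_{\grr,1}\|h\|_{\grr,a})$ from the lower-order cross terms. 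The main obstacle is bookkeeping: one must arrange the convexity/interpolation estimate (5.3) carefully so that every cross term of the form (high norm)$\times$(low norm) is either a ``small factor times something controlled'' or gets interpolated down, and so that no term with $\|h\|_{\grr,a+1}$ appears without the small coefficient $\|F\|_{\grr,1}$ in front --- this is what makes (6.4) usable for both the coarse and fine estimates later. I expect the delicate point to be showing the $\|\barr{\del}_{X}Z\|_{\grr,a}$ (as opposed to $\|\barr{\del}_{X}Z\|_{\grr,a+1}$) suffices in the leading correction term, which uses the specific structure of $W_{\barr{\grb}}$ in (4.12) and the identity $X_{\barr{\grb}}(Z+F)=\barr{\del}_{X}Z$-components $+\,X_{\barr{\grb}}F$ rather than a naive derivative count.
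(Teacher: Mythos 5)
Your overall route is the paper's: write $\barr{\del}_{X^{1}}Z_{1}=(\barr{C}\barr{W})\circ g$ via (4.12), estimate the three factors with (5.5), (5.6), (5.4) and the inverse-map estimate (lemma 5.1, not just lemma 4.1, since you need $a$ beyond $2$), and reduce $f_{(2)}$ to $F$ and $h$ by (5.8).  But there is a genuine misstep in your treatment of $W$.  You propose to bound $\|X_{\barr{\grb}}(Z+F)\|_{\grr,a}$ by applying the first inequality of lemma 6.1 with $u=Z+F$, and you then report the output as $\grr^{-2a}\bigl(\|\barr{\del}_{X}(Z+F)\|_{\grr,a}+\cdots\bigr)$.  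That is not what lemma 6.1 gives: its first inequality has $\|u\|_{a+1}$, not $\|\barr{\del}_{X}u\|_{a}$, on the right, so with $u=Z+F$ it would produce a term of size $\|Z+F\|_{a+1}\approx\|h\|_{\grr,a+1}+\|F\|_{\grr,a+1}$ with \emph{no} small prefactor.  That term would destroy the estimate --- exactly the $\|h\|_{\grr,a+1}$-without-$\|F\|_{\grr,1}$ pathology you yourself warn against at the end.

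The correct move is simpler and bypasses lemma 6.1 entirely for this piece: $X_{\barr{\grb}}(Z+F)$ \emph{is} the $\barr{\grb}$-component of $\barr{\del}_{X}(Z+F)$ by the definition (2.8), and $X_{\barr{\grb}}z^{\gra}$, $X_{\barr{\grb}}z^{n}$ are the components of $\barr{\del}_{X}Z$.  Reading (4.12) termwise and noting the three subtracted quantities are constants (bounded by $\|\barr{\del}_{X}(Z+F)\|_{0}$ and $\|F\|_{1}$) gives directly
\[
  \|W\|_{a}\;\leq\;2\|\barr{\del}_{X}(Z+F)\|_{a}+\|F\|_{1}\|\barr{\del}_{X}Z\|_{a},
\]
which is the paper's (6.11).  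Lemma 6.1 \emph{is} needed in the proof, but only for $\|Xf_{(2)}\|_{a}$ inside the estimate of $C$ (the paper's (6.8)), not for $W$.  Once you replace your $W$-step with this identity, the remaining assembly --- product rule for $CW$, chain rule for the composition with $g$, and convexity (5.3) to merge $\|F\|_{2}\|F\|_{a}$ and $\|F\|_{2}\|h\|_{a}$ into the stated terms --- is exactly what the paper does, and your grouping into the three lines of (6.3) is right.
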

\begin{proof}
We must estimate (4.12), which we abbreviate as
\begin{equation}
  \barr{\del}_{X^{1}}Z_{1} \; = \; (\barr{C}\barr{W})\circ g, \; \; \;
  C \; = \; I - Xf_{(2)}(I + Xf_{(2)})^{-1}.
\end{equation}
For the following 4 estimates, we use, respectively, chain rule (5.5)
and lemma 5.1; the product rule and chain rule (5.6); the product
rule; and lemma 6.1.
\begin{eqnarray}
  \|\barr{\del}_{X^{1}}Z_{1}\|_{\grr_{1},a} & \leq &  c_{a}\grr^{-a_{2}}
   (\|CW\|_{a} + \|CW\|_{1}\|f_{(2)}\|_{a}); \\
  \|C\|_{a} & \leq & c_{a}\grr^{-2a}(1+\|Xf_{(2)}\|_{a}); \\
  \|CW\|_{a} & \leq & c_{a}\grr^{-3a} (\|W\|_{a} + \|W\|_{0}\|Xf_{(2)}\|_{a}),\\
  \|Xf_{(2)}\|_{a} & \leq & c_{a}\grr^{-2a}(\|f_{(2)}\|_{a+1} +
   \|f_{(2)}\|_{1}( \|\barr{\del}_{X}Z\|_{a} + \|h\|_{a+1})).
\end{eqnarray}
In combining and simplifying, we note that under our hypotheses,
$1+\|f_{(2)}\|_{1}\leq2$,
$\|\barr{\del}_{X}Z\|_{1} + \|h\|_{2}\leq2$, and
\begin{equation}
  \|f_{(2)}\|_{2}\|f_{(2)}\|_{a}\; \leq \;
  2c_{a}\grr^{-2-a}\|f_{(2)}\|_{1}\|f_{(2)}\|_{a+1},
\end{equation}
by the convexity estimate (5.3).    This leads to
\begin{eqnarray}
  \|\barr{\del}_{X^{1}}Z_{1}\|_{\grr_{1},a} & \leq &  c_{a}\grr^{-a_{2}}
 \{ \|W\|_{a} + \|W\|_{1}\|f_{(2)}\|_{a} + \\ &  &  \nonumber
  + \|W\|_{0} (\|f_{(2)}\|_{a+1} + \|f_{(2)}\|_{1}
  (\|\barr{\del}_{X}Z\|_{a} + \|h\|_{a+1}) ) \} .
\end{eqnarray}
From (4.12) we get
\begin{equation}
  \|W\|_{a} \; \leq \; 2\|\barr{\del}_{X}(Z+F)\|_{a} +
         \|F\|_{1}\|\barr{\del}_{X}Z\|_{a}.
\end{equation}
We substitute this for the $W$-terms and use the convexity
estimate (5.3) for $\|F\|_{2}\|F\|_{a}$ and $\|F\|_{2}\|h\|_{a}$
as above.  This gives the lemma.\end{proof}

\section{Estimates for $S_{t}$, $P$, and $Q$. }
\setcounter{equation}{0}
Here we summarize some results on a standard smoothing operator $S_{t}$,
and give an estimate for the Friedrichs commutator.  Then we recall some
necessary results from [6] estimating the homotopy operators $P$ and $Q$.

By lemma 2.1  we may define a standard smoothing operator
$S_{t}:C(D_{\grr}) \rightarrow C^{\infty}(D_{\grr_{1}})$,
$\grr_{1}=\grr(1-\grs)$, $ 0 < t < \hat{c}^{-1} \grr\grs$,
$\hat{c}=3\sqrt{2}$,
\begin{equation}
  S_{t}u(x) = \int_{|x-y|<\grr\grs}u(y)\grch_{t}(x-y)dy =
   \int_{|z|<1}\grch(z)u(x-tz)dz,
\end{equation}
where $spt\grch\subset\subset\{|z|<1\}$, $\int \grch(z)dz =1$, and
$\int z^{I}\grch(z)dz = 0$ for $0<|I|< 2m$, say.

We have the basic estimates [17], [24]
\begin{equation}
\begin{array}{rclcl}
  \|S_{t}u\|_{\grr_{1},a} & \leq & c_{a}\grr^{-a}t^{b-a}\|u\|_{\grr,b} & , &
   0\leq b\leq a < \infty, \\
  \|(I-S_{t})u\|_{\grr_{1},a} & \leq & c_{m}\grr^{-a}t^{b-a}\|u\|_{\grr,b} & , &
   0\leq a \leq b < 2m . \\
\end{array}
\end{equation}
For $b-a\in\mathbf{Z}$ these hold without the $\grr$-factors.  For
$b-a\in\mathbf{R}$ they follow by means of the interpolation
estimate (5.2).

The commutator $[S_{t},\barr{\del}_{M}]$ is equivalent to all
$[S_{t},Y_{\barr{\gra}}]= [S_{t},w]\del_{x^{n}}$, where
$w = -r_{\barr{\gra}}/2r_{\barr{n}}$.
\begin{lemma}
For $k\in\mathbf{Z}^{+}$, $0\leq\gra\leq 1$,
$0< t < \hat{c}^{-1}\grr\grs$, we have
\begin{eqnarray}
  \|[S_{t},\barr{\del}_{M}]u\|_{\grr_{1},k} &  \leq &
     c_{m}\grr^{-k-\gra} t^{\gra}
    \{\|r\|_{\grr,2}\|u\|_{\grr,k+\gra} +
   \|r\|_{\grr,k+2}\|u\|_{\grr,\gra}\} , \\
\|[S_{t},\barr{\del}_{M}]u\|_{\grr_{1},k+\gra} &  \leq &
     c_{m}\grr^{-k-\gra}
    \{\|r\|_{\grr,2}\|u\|_{\grr,k+\gra} +
   \|r\|_{\grr,k+2+\gra}\|u\|_{\grr,0}\}.
\end{eqnarray}
\end{lemma}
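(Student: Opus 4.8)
The plan is to reduce everything to the scalar Friedrichs commutator $[S_{t},w]\del_{x^{n}}$ and to prove an integral identity that trades the derivative falling on $u$ for a H\"older difference of $u$. As recorded just before the lemma, $[S_{t},\barr{\del}_{M}]$ acts on the coefficients of a form through the operators $[S_{t},Y_{\barr{\gra}}]$; since $S_{t}$ is a convolution it commutes with the constant-coefficient fields, and the support bound $t<\hat c^{-1}\grr\grs$ combined with (2.6) keeps the convolution support of every point of $D_{\grr_{1}}$ inside $D_{\grr}$, so no boundary terms occur and $[S_{t},Y_{\barr{\gra}}]u=[S_{t},w]\del_{x^{n}}u$ with $w=-r_{\barr{\gra}}/2r_{\barr{n}}$. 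Because $r=-y^{n}+|z'|^{2}+h$ gives $r_{\barr{n}}=-i/2+h_{\barr{n}}$, bounded below, and $r_{\barr{\gra}}=z^{\gra}+h_{\barr{\gra}}$, the product- and chain-rule estimates (5.4), (5.6) bound $\|w\|_{\grr,a}$ by a $\grr$-power times $\|r\|_{\grr,a+1}$, so it suffices to prove both inequalities with the orders of $w$ equal to $1$, $k+1$, $k+1+\gra$ in place of the orders $2$, $k+2$, $k+2+\gra$ of $r$.

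The key step is the identity
\begin{equation}
  [S_{t},w]\del_{x^{n}}u(x)=\frac1t\int\gry_{t}(x-y)\bigl(w(y)-w(x)\bigr)\bigl(u(y)-u(x)\bigr)\,dy
  -\int\grch_{t}(x-y)(\del_{x^{n}}w)(y)\bigl(u(y)-u(x)\bigr)\,dy,
\end{equation}
where $\gry=\del_{x^{n}}\grch$ and $\grch_{t},\gry_{t}$ are rescaled as in (7.1), so that $\del_{x^{n}}\grch_{t}=t^{-1}\gry_{t}$. I would derive it by writing $[S_{t},w]\del_{x^{n}}u(x)=\int\grch_{t}(x-y)(w(y)-w(x))(\del_{x^{n}}u)(y)\,dy$, integrating by parts in the last coordinate of $y$, and then inserting $u(x)$ in each of the two resulting terms; the two $u(x)$-contributions are $\pm\,u(x)\,S_{t}(\del_{x^{n}}w)(x)$ --- the first after one more integration by parts, using $\int\gry=0$ --- and cancel. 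The point is that every surviving term now carries the factor $u(y)-u(x)$, which on $|x-y|<t$ is $O(t^{\gra}\|u\|_{\gra})$, while the extra derivative in the first kernel is paid for by $w(y)-w(x)=O(t\|w\|_{1})$. This cancellation is what makes a gain in $t$ possible: retaining instead the naive term $S_{t}((\del_{x^{n}}w)u)$ would leave an expression with no gain.

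For the first inequality I would substitute $y=x-tz$, turning the right side into $\frac1t\int\gry(z)\bigl(w(x-tz)-w(x)\bigr)\bigl(u(x-tz)-u(x)\bigr)\,dz$ minus $\int\grch(z)(\del_{x^{n}}w)(x-tz)\bigl(u(x-tz)-u(x)\bigr)\,dz$, differentiate $k$ times in $x$ under the integral, and apply Leibniz. A $j$-fold $x$-derivative of $w(x-tz)-w(x)$ is $(\del^{j}w)(x-tz)-(\del^{j}w)(x)=O\bigl(t|z|\,\|w\|_{j+1}\bigr)$, and a $j$-fold derivative of $u(x-tz)-u(x)$ is $O\bigl((t|z|)^{\gra}\|u\|_{j+\gra}\bigr)$; multiplying, integrating $z$ over $|z|<1$ against $|\gry|$ or $|\grch|$, and absorbing the $1/t$ bounds each resulting term by $c\,t^{\gra}\|w\|_{j_{2}+1}\|u\|_{j_{3}+\gra}$ with $j_{2}+j_{3}=k$. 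The extreme cases $j_{2}=0$ and $j_{2}=k$ are exactly the two terms of the claimed bound, and the intermediate ones reduce to them by the convexity estimate (5.3). Carrying this out in the scale-invariant norms of section 5 produces the factor $\grr^{-k-\gra}$.

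For the second inequality I would estimate the $\gra$-H\"older ratio of $\del^{k}$ of the same two integrals. When $|x-x'|<t$ I differentiate under the integral as above and bound the H\"older ratio of each product by the product rule for H\"older seminorms, using $\|(\del^{j_{2}}w)(\cdot-tz)-(\del^{j_{2}}w)\|_{0}\le t|z|\,\|w\|_{j_{2}+1}$, $H_{\gra}\bigl((\del^{j_{2}}w)(\cdot-tz)-(\del^{j_{2}}w)\bigr)\le t|z|\,\|w\|_{j_{2}+1+\gra}$ and the corresponding bounds for $u$; after the $1/t$ and the convexity estimate this yields the $\|w\|_{1}\|u\|_{k+\gra}$- and $\|w\|_{k+1+\gra}\|u\|_{0}$-type terms, now with no gain in $t$. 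When $|x-x'|\ge t$ I bound the difference crudely, $2\|\del^{k}([S_{t},w]\del_{x^{n}}u)\|_{0}\le 2t^{-\gra}\|\del^{k}([S_{t},w]\del_{x^{n}}u)\|_{0}\,|x-x'|^{\gra}$, and invoke the first inequality, whose $t^{\gra}$ exactly absorbs the $t^{-\gra}$. The main obstacle is arranging the cancellation in the key identity so that the derivative on $u$ is genuinely exchanged for $u(y)-u(x)$, and then organizing the Leibniz expansion so that every mixed term is caught by the convexity estimate and the $\grr$-exponents come out as stated; the individual kernel estimates are then routine.
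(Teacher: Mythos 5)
Your proposal follows essentially the same route as the paper's proof: the integration-by-parts identity you derive, with the double-difference kernel $R(x,z,t)(u(x-tz)-u(x))$ and the cancellation of the two $u(x)\,S_t(\del_{x^n}w)(x)$ terms, is exactly the paper's (7.5), and your Leibniz expansion of $k$ derivatives followed by the translated-difference bounds and the convexity estimate reproduces the paper's use of (7.6)--(7.7) and (5.3). The case split $|x-x'|\lessgtr t$ in the H\"older ratio and the passage from the coefficient $w=-r_{\barr\gra}/2r_{\barr n}$ to $r$ are standard steps the paper leaves implicit, and you have them right.
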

\begin{proof}
  We set $v(x) =[S_{t},w]\del_{x^{n}} u(x)$, and integrate by parts to
take the derivative $\del_{x^{n}}$ off $u$.  This gives
\begin{equation}
\begin{array}{rcl}
   v(x) & = & \int_{|z|\leq 1}R(x,z,t)(u(x-tz)-u(x))dz, \\
  R(x,z,t) & = & \del_{z^{n}}\grch(z)t^{-1}(w(x-tz) - w(x)) -
          \grch(z)\del_{x^{n}}w(x-tz) .
\end{array}
\end{equation}
For the sup norm and Holder ratio, we readily derive
\begin{equation}
\begin{array}{rcl}
  \|v\|_{0} & \leq & c_{0}\|w_{x}\|_{0}t^{\gra}H_{\gra}(u) , \\
  H_{\gra}(v) & \leq & c_{\gra}(\|w_{x}\|_{0}H_{\gra}(u) +
                  H_{\gra}(w_{x})\|u\|_{0}).
\end{array}
\end{equation}
We take a first order $x$-derivative $D$ by means of
\begin{equation}
  D[S_{t},w]\del_{x^{n}}u \, = \, [S_{t},Dw]\del_{x^{n}}u(x) +
  [S_{t},w]\del_{x^{n}}Du(x).
\end{equation}
We take $k$ such derivatives and apply (7.6) and the convexity
estimate (5.3).  This gives the lemma.
\end{proof}

In [6] we have derived the following estimates, coarse and fine, for the
homotopy formula.
\begin{lemma} Let the domain $D_{\grr}$ and the corresponding hypersurface
$M_{\grr}$ be as in lemma 2.1.  Then the operators $P$ and $Q$ in the
homotopy formula (2.13) for $M_{\grr}$ satisfy the following estimates.
\begin{equation}
  \|P\grph\|_{\grr_{1},a} \; \leq \; K(a)
     \{\|\grph\|_{\grr,a} + \|h\|_{\grr,a+2}\|\grph\|_{\grr,0}\},
      \; \; K(a)=c_{a}(\grr\grs)^{-s(a)} ,
\end{equation}
where $\grr_{1} = \grr(1-\grs)$, and
$s(a)$ is some polynomial in $a$,  $0\leq a < \infty$.  Also,
for $k\in\mathbf{Z}$, $0\leq k$, $\gra=1/2$,  and
$\grb= 5/2$,
\begin{equation}
   \|P\grph\|_{\grr_{1},k+\gra} \; \leq \; K(k+\gra)
       \{(1+\|h\|_{\grr,\grb})\|\grph\|_{\grr,k} +
   \|h\|_{\grr,k+\grb}\|\grph\|_{\grr,0}\}.
\end{equation}
For the special case of forms $\grph$ of type (0,1), $k=0$, and
$\gra=1/2$, we have
\begin{equation}
 \|P\grph\|_{\grr_{1},1/2} \; \leq \; c_{1}\grr^{-3/2}\grs^{-2n}
 \|\grph\|_{\grr,0} .
\end{equation}
\end{lemma}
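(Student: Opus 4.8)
The plan is to construct $P$ and $Q$ from the explicit Cauchy--Fantappi\`e integral formula that underlies Henkin's homotopy formula (2.13), and then estimate the resulting kernels. Since $M_{\grr}$ is cut out of the strictly pseudoconvex hypersurface $M$ by a real function of the single holomorphic coordinate $z^{n}$, I would first fix a holomorphic support (barrier) function $\grPh(z,\grz)$, defined for $z,\grz$ near $\barr{M_{\grr}}$, with the Henkin--Ram\'irez lower bound $\mathrm{Re}\,\grPh(z,\grz)\gtrsim |z-\grz|^{2}+(\text{non-isotropic terms})$ coming from strict pseudoconvexity, together with a companion barrier for the slices $\{\mathrm{Re}\,z^{n}=\text{const}\}$ that produce the boundary $\del M_{\grr}$. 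With these, $P\grph$ is an explicit integral over $M_{\grr}$ plus a boundary integral over $\del M_{\grr}$, and $Q\grph$ is of the same structural type; the coefficients of the kernels are rational in $\grz$, $\grPh$ and $\del\grPh$, so all the estimates reduce to pointwise bounds on these kernels and their $x$-derivatives after differentiating under the integral sign, and the estimates for $Q$ follow in the same way as those for $P$.

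The coarse estimate (7.9) is the easier half, asserting only a loss of a fixed polynomial power $s(a)$ in $\grr\grs$. I would obtain it by splitting the domain of integration into a near-diagonal region and a collar of width $\sim\grr\grs$ near $\del M_{\grr}$, bounding each piece of the kernel and its derivatives by the reciprocal of a power of the non-isotropic pseudodistance, and integrating using the strict convexity of $D_{\grr}$ from lemma 2.1. The term $\|h\|_{\grr,a+2}\|\grph\|_{\grr,0}$ is the usual contribution from derivatives falling on the defining function. To pin down the powers of $\grr$ and $\grs$ I would use the non-isotropic dilation $f_{\grr}$ of section 1 to rescale $M_{\grr}$ to unit size, where the constants are absolute, then track how the standard H\"older norms transform back and how the collar width scales; this gives $K(a)=c_{a}(\grr\grs)^{-s(a)}$.

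The fine estimates (7.10), (7.11), in which the gain is \emph{exactly} $1/2$, are the substantive point and follow the Henkin--Romanov analysis. Writing $\grph(y)=(\grph(y)-\grph(x))+\grph(x)$ near the singularity lets one place a sup norm or a H\"older seminorm of $\grph$ on one factor while exploiting the integrability of the kernel; the homogeneity count for the non-isotropic balls on a strictly pseudoconvex real hypersurface of dimension $2n-1$ is precisely what limits the gain to $1/2$ and no more. For $\|P\grph\|_{\grr_{1},k+1/2}$ one estimates $|D^{k}P\grph(x_{1})-D^{k}P\grph(x_{0})|$ by breaking the integral into $\{|y-x_{0}|\lesssim|x_{1}-x_{0}|\}$, where one uses absolute integrability of the differentiated kernel against $\|\grph\|_{\grr,k}$, and its complement, where one applies the mean value theorem to the kernel in the $x$-variable; the exponent arithmetic then yields the factor $|x_{1}-x_{0}|^{1/2}$ and the weights $(1+\|h\|_{\grr,5/2})$ and $\|h\|_{\grr,k+5/2}$. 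Specializing to $(0,1)$-forms with $k=0$ and collecting the $\grr$- and $\grs$-powers as in the coarse case gives the clean bound $\|P\grph\|_{\grr_{1},1/2}\le c_{1}\grr^{-3/2}\grs^{-2n}\|\grph\|_{\grr,0}$ of (7.11).

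The main obstacle throughout is the sharp, scale-explicit control of the non-isotropic singularity of the Henkin kernel: that $P$ and $Q$ are bounded on H\"older spaces with \emph{some} gain is routine, but extracting the optimal gain $1/2$ --- in analogy with the $\barr{\del}$ problem on strictly pseudoconvex domains --- together with the exact dependence on the small parameters $\grr$ and $\grs$, which is needed because the iteration in sections 10--13 lives on a shrinking family of domains, requires the delicate kernel estimates carried out in [6]; I would simply quote those, as the present lemma does.
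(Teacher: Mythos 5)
The paper gives no proof of this lemma at all: the text immediately preceding it reads ``In [6] we have derived the following estimates, coarse and fine, for the homotopy formula,'' and the lemma is quoted verbatim from that reference (Gong--Webster, \emph{Regularity for the CR vector bundle problem II}). You recognize this in your closing paragraph, so in the only sense that can be checked against the present paper, your approach agrees with the paper's: cite [6].

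Your sketch of what the argument in [6] must look like is a reasonable high-level reconstruction of the standard Henkin--Romanov kernel machinery --- Cauchy--Fantappi\`e representation with a holomorphic support function, splitting the integral near and away from the non-isotropic singularity, putting a H\"older seminorm of $\grph$ or a sup norm of the kernel's $x$-difference on one factor, and non-isotropic rescaling to extract the $\grr,\grs$ dependence. A few cautions, since you cannot actually verify them here: (i) the numbering in your proposal is off by one (the coarse estimate is the first display, not the second); (ii) the exact shape of the weight terms $\|h\|_{\grr,a+2}$, $(1+\|h\|_{\grr,5/2})$, $\|h\|_{\grr,k+5/2}$, and the exact polynomial $s(a)$ and the specific exponents $\grr^{-3/2}\grs^{-2n}$ in the third estimate, are precisely the delicate bookkeeping that [6] is cited for, and a sketch at this level does not pin them down --- for example, it is not obvious from your outline why the exponent of $\|h\|$ in the fine estimate is $k+\grb$ with $\grb=5/2$ rather than $k+2$ or $k+3$, nor why $\grs$ enters to the power $2n$ in (7.11). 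These are exactly the kinds of constants the Nash--Moser iteration in sections 10--13 is sensitive to, which is why the authors isolate them in a companion paper rather than re-deriving them here. For the purposes of this paper, quoting [6] is the correct and complete answer.
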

In what follows we shall increase $c_{a}$ and $s(a)$ a finite number
of times, but keep the same notation $K(a)$.

\section{Estimates for $F$.}
\setcounter{equation}{0}
In sections 5 and 6 we have derived estimates for $h_{1}$ and
$\barr{\del}_{X^{1}}Z_{1}$ in terms of $h$, $\barr{\del}_{X}Z$, and
$F$.  Now we use the results of section 7 to estimate $F$ and to
develop these results further.

Our basic iteration step will be the following.  Given $Z$ and
$\barr{\del}_{X}Z$ on $D_{\grr}(h)$ as in lemma 2.1, we form
$P\barr{\del}_{X}Z$ on $D_{\grr}(h)$ and estimate it on $D_{\grr(1-\grs)}(h)$,
$0<\grs<1/2$, using lemma 7.2.  For $0<t\leq\hat{c}^{-1}\grr\grs$,
we define and estimate $F=-S_{t}P\barr{\del}_{X}Z$ and the projected
map $f=I+f_{(2)}$ on $D_{\grr(1-\grs)^{2}}(h)$ using  (7.2).  Then
lemma 4.2 gives $g=I+g_{(2)}$, $Z_{1}$, $h_{1}$ on
$D_{\grr(1-\grs)^{3}}(h)$, which can be estimated on
$D_{\grr(1-\grs)^{4}}(h)$.  This last domain contains
$D_{\grr(1-\grs)^{3}(1-2\grs)}(h_{1})$, which in turn contains
$D_{\grr_{1}}(h_{1})$, $\grr_{1}=\grr(1-5\grs)$, $0<\grs<1/5$, since
$(1-\grs)^{3}(1-2\grs)\geq(1-3\grs)(1-2\grs)\geq(1-5\grs)$.  We may
now take norms $\|\cdot\|_{\grr_{1},a}$ over $D_{\grr_{1}}(h_{1})$ on
the left hand sides of the estimates.

For convenience we put
\begin{equation}
     \tilde{\grr}(j)=\grr(1-\grs)^{j}, 0<j\leq 4,
\end{equation}
relative to norms on the domains defined by the function $h$.
From (7.2) and (7.10) we get
\begin{equation}
  \|F\|_{\tilde{\grr}(2),1} \; \leq \; c_{1}\grr^{-5/2}\grs^{-2n}
   t^{-1/2}\|\barr{\del}_{X}Z\|_{\grr,0}.
\end{equation}
Thus the condition (4.17) in lemma 4.2 will hold if
\begin{equation}
  t^{-1/2}\|\barr{\del}_{X}Z\|_{\grr,1} \; \leq \; \grg_{1}\grr^{7/2}\grs^{2n+1},
\end{equation}
where we have replaced $\grg_{1}$ by a possibly smaller positive constant.
This will also guarantee the condition in lemma 6.2, as $t\leq1$.

Combining (7.2) and (7.8) gives, with $b\leq a$ and $b\leq m$, and a
possibly larger $K(a)$ of the same form (7.8),
\begin{eqnarray}
  \|F\|_{\tilde{\grr}(2),a} & \leq & c_{a}\grr^{-a}t^{b-a}
   \|P\barr{\del}_{X}Z\|_{\grr,b} \\    & \leq &  K(a)t^{b-a}
  \{ \|\barr{\del}_{X}Z\|_{\grr,b}+ \|h\|_{\grr,b+2}
   \|\barr{\del}_{X}Z\|_{\grr,0} \},  \nonumber  \\
  \|F\|_{\tilde{\grr}(2),1} & \leq & K(1)(1+\|h\|_{\grr,3})
   \|\barr{\del}_{X}Z\|_{\grr,1}.  \nonumber
\end{eqnarray}
We may instead use (7.2) and (7.9), with $b=l+(1/2)\leq a$,
$l\in\mathbf{Z}$, $l\leq m$.    This gives the following
alternative  estimate.
\begin{equation}
  \|F\|_{\tilde{\grr}(2),a} \; \leq \;  K(a)t^{l+(1/2)-a}
\{ (1+\|h\|_{\grr,\grb})\|\barr{\del}_{X}Z\|_{\grr,l}+
   \|h\|_{\grr,l+\grb}\|\barr{\del}_{X}Z\|_{\grr,0} \}.
\end{equation}

For the map $f$ we have, using (5.8), (8.2), (8.4),
\begin{eqnarray}
  \|f_{(2)}\|_{\tilde{\grr}(2),a} & \leq & K(a) \{
   t^{-1/2}\|\barr{\del}_{X}Z\|_{\grr,0} \|h\|_{\grr,a} +  \\ &  & + t^{b-a}
   [\|\barr{\del}_{X}Z\|_{\grr,b}+ \|h\|_{\grr,b+2}
   \|\barr{\del}_{X}Z\|_{\grr,0}]  \}, \nonumber
\end{eqnarray}
where $b\leq a$, and we recopy (5.7) as
\begin{equation}
   \|g_{(2)}\|_{\grr_{1},a} \; \leq \;
   \|g_{(2)}\|_{\tilde{\grr}(3),a} \; \leq \; c_{a}\grr^{-4(a+2)}
   \|f_{(2)}\|_{\tilde{\grr}(2),a}.
\end{equation}

To refine the estimates for $h_{1}$, we now take $\grr_{1}=\grr(1-5\grs)$
on the left-hand sides in (5.9) and (5.10), and
norms $\|\cdot\|_{\tilde{\grr}(2),*}$ on the
right-hand sides.  Using (8.2) and either (8.4) or (8.5), we get the
following coarse and fine estimates for $h_{1}$.
\begin{lemma}
Relative to the above described domains, and $1\leq a < \infty$,
$b\leq a$, $b\leq m$, or $k\in\mathbf{Z}$, $k+(1/2)\leq a$, $k\leq m$,
we have
\begin{eqnarray}
 \|h_{1}\|_{\grr_{1},a} & \leq & K(a) \{\|h\|_{\grr,a} + \\
   &  & + t^{b-a}(\|\barr{\del}_{X}Z\|_{\grr,b} +
        \|h\|_{\grr,b+2}\|\barr{\del}_{X}Z\|_{\grr,0}) \}, \nonumber \\
 \|h_{1}\|_{\grr_{1},a} & \leq & K(a) \{\|h\|_{\grr,a} + \\
   &  & + t^{k+(1/2)-a}[(1+\|h\|_{\grr,\grb})
      \|\barr{\del}_{X}Z\|_{\grr,k} +
        \|h\|_{\grr,k+\grb}\|\barr{\del}_{X}Z\|_{\grr,0} ] \}, \nonumber
\end{eqnarray}
\begin{eqnarray}
\|h_{1}-h\|_{\grr_{1},a} & \leq & K(a)  \{
     t^{-1/2}\|\barr{\del}_{X}Z\|_{\grr,0} \|h\|_{\grr,a+1} + \\
   &  & + t^{b-a}(\|\barr{\del}_{X}Z\|_{\grr,b} +
        \|h\|_{\grr,b+2}\|\barr{\del}_{X}Z\|_{\grr,0}) \}. \nonumber
\end{eqnarray}
\end{lemma}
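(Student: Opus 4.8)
The plan is to derive the three displayed inequalities by substituting the $F$-estimates (8.2), (8.4), (8.5) of this section into the general bounds (5.9), (5.10) of Lemma 5.2, and then collapsing the extraneous powers of $\grr$ and $\grs$ into the single admissible form $K(a)=c_{a}(\grr\grs)^{-s(a)}$, $s(a)$ a fixed polynomial, as permitted by the convention set at the end of section 7. First I would pin down the radii exactly as in the paragraph preceding (8.1): with $\tilde{\grr}(j)=\grr(1-\grs)^{j}$ and $0<\grs<1/5$, $0<t\leq\hat{c}^{-1}\grr\grs$, one controls $F=-S_{t}P\barr{\del}_{X}Z$ and $f=I+f_{(2)}$ on $D_{\tilde{\grr}(2)}(h)$ by (7.2) and Lemma 7.2, then $g$, $Z_{1}$, $h_{1}$ on a neighborhood of $D_{\tilde{\grr}(3)}(h)$ estimated on $D_{\tilde{\grr}(4)}(h)$ by Lemma 4.2, and finally, using (4.18) and $(1-\grs)^{3}(1-2\grs)\geq(1-5\grs)$, on $D_{\grr_{1}}(h_{1})$ with $\grr_{1}=\grr(1-5\grs)$. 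I would also record that the standing smallness assumptions $\|h\|_{\grr,2}\leq\grg_{0}<1/2$ and $\|\barr{\del}_{X}Z\|_{\grr,1}\leq 1/2$, together with (8.3) (which forces the hypothesis (4.17) of Lemma 4.2), make Lemmas 4.1, 4.2, 5.2 and 7.2 all applicable with these radii.

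For the coarse estimate on $h_{1}$ I would apply (5.9) over the domains just described, $\|h_{1}\|_{\grr_{1},a}\leq c_{a}\grr^{-a_{2}}(\|h\|_{\grr,a}+\|F\|_{\tilde{\grr}(2),a})$, where $\|h\|_{\tilde{\grr}(2),a}$ has been replaced by the larger $\|h\|_{\grr,a}$ by monotonicity of the norm in the radius, and then insert the coarse $F$-bound (8.4) with $b\leq a$, $b\leq m$; the product of $c_{a}\grr^{-a_{2}}$ with the $K(a)$ of (8.4) is again of the form $K(a)$ (enlarging $c_{a}$ and $s(a)$ finitely often), which is the first inequality. Replacing (8.4) by the fine $F$-bound (8.5), with its index $l$ read as $k\in\mathbf{Z}$, $k+(1/2)\leq a$, $k\leq m$, gives the second inequality in the same way. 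For $h_{1}-h$ I would start from (5.10), $\|h_{1}-h\|_{\grr_{1},a}\leq c_{a}\grr^{-a_{2}}(\|F\|_{\tilde{\grr}(2),1}\|h\|_{\grr,a+1}+\|F\|_{\tilde{\grr}(2),a})$, estimate the first $F$-factor by (8.2) and the second by (8.4), absorb the $\grr$- and $\grs$-powers into $K(a)$, and simplify the products with $\|h\|_{\grr,2}\leq 1$ and $\|\barr{\del}_{X}Z\|_{\grr,1}\leq 1$; this produces the third inequality.

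The argument is essentially mechanical, so I expect no structural difficulty; the delicate point is the bookkeeping itself. I would have to verify that the quadratic exponent $a_{2}$ from the chain-rule estimate of Lemma 5.2, compounded with the polynomial $s(a)$ and the $\grs^{-2n}$, $\grs^{-s(a)}$ factors coming from Lemma 7.2 and (8.2)--(8.5), is still dominated by a single $(\grr\grs)^{-s(a)}$ with $s(a)$ polynomial in $a$, and that every passage between norms over $D_{\tilde{\grr}(j)}(h)$ and norms over $D_{\grr_{1}}(h_{1})$ shrinks the domain on the left-hand side (so the left-hand norm can only decrease) while enlarging the domains on the right-hand side only in ways already built into the $F$- and $h$-estimates. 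Once these inclusions are checked in the correct direction, the three displayed inequalities follow by direct substitution.
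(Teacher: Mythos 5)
Your proposal is correct and follows the same route the paper takes: the paper's entire argument for Lemma 8.1 is the one sentence preceding it, instructing the reader to take $\rho_{1}=\rho(1-5\sigma)$ on the left of (5.9) and (5.10), norms over $D_{\tilde\rho(2)}(h)$ on the right, and then substitute (8.2) together with (8.4) or (8.5); you carry out exactly that substitution, with the domain-nesting and $K(a)$-absorption bookkeeping the paper leaves implicit.

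Two minor remarks on the bookkeeping you flag as "delicate." First, the step replacing $\|h\|_{\tilde\rho(2),a}$ by $\|h\|_{\rho,a}$ via monotonicity is indeed needed and correct, and the simplification $\|\bar\partial_{X}Z\|_{\rho,1}\leq 1$, $\|h\|_{\rho,2}\leq 1$ you invoke at the end is harmless but not actually required to produce the three stated inequalities; the $F$-bounds (8.2), (8.4), (8.5) already deliver the right-hand sides verbatim after multiplying through by $c_{a}\rho^{-a_{2}}$ and absorbing into $K(a)$. Second, your worry about whether $\rho^{-a_{2}}$ (quadratic in $a$) times $(\rho\sigma)^{-s(a)}$ (polynomial) remains of the form $(\rho\sigma)^{-s(a)}$ is resolved exactly as you suspect: since $\sigma<1$ one has $\rho^{-a_{2}}\leq(\rho\sigma)^{-a_{2}}$, and the paper's convention at the end of section 7 permits increasing $s(a)$ finitely often.
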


For a coarse estimate of the new error $\barr{\del}_{X^{1}}Z_{1}$,
we may simplify (6.3) to
\begin{equation}
\begin{array}{rcl}
  \|\barr{\del}_{X^{1}}Z_{1}\|_{\grr_{1},a} & \leq & c_{a}\grr^{-a_{2}} \{
  \|\barr{\del}_{X}(Z+F)\|_{\tilde{\grr}(2),a} +
       \|F\|_{\tilde{\grr}(2),1}\|\barr{\del}_{X}Z\|_{\grr,a} + \\
  &  & +(\|\barr{\del}_{X}Z\|_{\grr,1} +\|F\|_{\tilde{\grr}(2),1})
   (\|F\|_{\tilde{\grr}(2),a+1} +
      \|F\|_{\tilde{\grr}(2),1}\|h\|_{\grr,a+1}) \}.
\end{array}
\end{equation}
Using (8.4) with $b=a<a+1$ to estimate $\|F\|_{\tilde{\grr}(2),a+1} $
and with $b=a=1$ to estimate $\|F\|_{\tilde{\grr}(2),1} $, and
combining some constants gives the following estimate.
\begin{lemma}
Assuming the above and $1\leq a\leq m$, we have
 \begin{eqnarray}
 \|\barr{\del}_{X^{1}}Z_{1}\|_{\grr_{1},a} & \leq & K(a+1)
  \{\|\barr{\del}_{X}(Z+F)\|_{\tilde{\grr}(2),a} + (1+\|h\|_{\grr,3})^{2}\cdot
   \\  &  & \nonumber  \cdot t^{-1}\|\barr{\del}_{X}Z\|_{\grr,1}
        (\|\barr{\del}_{X}Z\|_{\grr,a}+ \|\barr{\del}_{X}Z\|_{\grr,1}
         \|h\|_{\grr,a+2} ) \}.
 \end{eqnarray}
\end{lemma}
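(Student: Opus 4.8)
The plan is to derive the estimate by specializing the general coarse bound of lemma 6.2 to the bounds on $F$ already recorded in (8.4). I would start from the simplified form (8.11), whose right-hand side is $c_{a}\grr^{-a_{2}}$ times the sum of the term $\|\barr{\del}_{X}(Z+F)\|_{\tilde{\grr}(2),a}$, the term $\|F\|_{\tilde{\grr}(2),1}\|\barr{\del}_{X}Z\|_{\grr,a}$, and the product $(\|\barr{\del}_{X}Z\|_{\grr,1}+\|F\|_{\tilde{\grr}(2),1})\cdot(\|F\|_{\tilde{\grr}(2),a+1}+\|F\|_{\tilde{\grr}(2),1}\|h\|_{\grr,a+1})$. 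The first term is kept verbatim; the remaining ones are where the two ``small'' factors must be exhibited.

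First I would insert (8.4), applied with $b=a$, to control $\|F\|_{\tilde{\grr}(2),a+1}$ by $K(a+1)t^{-1}$ times $\|\barr{\del}_{X}Z\|_{\grr,a}+\|h\|_{\grr,a+2}\|\barr{\del}_{X}Z\|_{\grr,0}$, and I would use the last line of (8.4) to control $\|F\|_{\tilde{\grr}(2),1}$ by $K(1)(1+\|h\|_{\grr,3})\|\barr{\del}_{X}Z\|_{\grr,1}$. Substituting the latter into $\|F\|_{\tilde{\grr}(2),1}\|\barr{\del}_{X}Z\|_{\grr,a}$ and using $1\le t^{-1}$ (valid since $t\le 1$) gives a contribution of the advertised shape. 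For the product term I would bound its first factor by $c(1+\|h\|_{\grr,3})\|\barr{\del}_{X}Z\|_{\grr,1}$, using the $F$-bound together with the standing smallness hypothesis $\|\barr{\del}_{X}Z\|_{\grr,1}\le 1/2$; multiplying out against the second factor and using $\|\barr{\del}_{X}Z\|_{\grr,0}\le\|\barr{\del}_{X}Z\|_{\grr,1}$ and $\|h\|_{\grr,a+1}\le\|h\|_{\grr,a+2}$, each resulting term carries one factor $\|\barr{\del}_{X}Z\|_{\grr,1}$ in front of the parenthesis and either $\|\barr{\del}_{X}Z\|_{\grr,a}$ or $\|\barr{\del}_{X}Z\|_{\grr,1}\|h\|_{\grr,a+2}$ inside it, with an overall factor $(1+\|h\|_{\grr,3})^{2}$. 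Finally I would absorb $c_{a}\grr^{-a_{2}}$ and all the intermediate $K$'s into a single $K(a+1)=c_{a+1}(\grr\grs)^{-s(a+1)}$, enlarging $c_{a+1}$ and $s$ a finite number of times as permitted by the convention stated after lemma 7.2.

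The step I expect to need the most care is the bookkeeping in this last paragraph. One must check that the doubled occurrence of $\|\barr{\del}_{X}Z\|_{\grr,1}$ arising from the cross term $\|F\|_{\tilde{\grr}(2),1}\|h\|_{\grr,a+1}$ genuinely fits the pattern $\|\barr{\del}_{X}Z\|_{\grr,1}\cdot\|\barr{\del}_{X}Z\|_{\grr,1}\|h\|_{\grr,a+2}$, that the power $(1+\|h\|_{\grr,3})^{2}$ --- and not a higher one --- is enough, and that all the powers of $\grr$ and $\grs$ produced along the renormalization chain, from the chain- and inverse-mapping estimates (5.5)--(5.7) as well as from (8.4) and (8.11), really do collapse into the single $(\grr\grs)^{-s(a+1)}$. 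None of this is conceptually difficult, but it is the place where a slip would most easily occur.
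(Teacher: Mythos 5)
Your proposal is correct and follows exactly the route the paper takes: start from the simplified bound (8.11), invoke (8.4) with exponent $a+1$ and $b=a$ to estimate $\|F\|_{\tilde{\grr}(2),a+1}$, use the $a=b=1$ instance for $\|F\|_{\tilde{\grr}(2),1}$, and then absorb constants and $\grr,\grs$-powers into a single $K(a+1)$ while using $1\le t^{-1}$, $\|\barr{\del}_{X}Z\|_{\grr,0}\le\|\barr{\del}_{X}Z\|_{\grr,1}$, and $\|h\|_{\grr,a+1}\le\|h\|_{\grr,a+2}$. The bookkeeping you flag as needing care is precisely what the paper compresses into the phrase ``combining some constants,'' and your expansion of it is accurate.
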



\section{Four-term estimates for $\barr{\del}_{X}(Z+F)$.}
\setcounter{equation}{0}
This is the central part  of the estimates, and is based on the 4 term
decomposition of section 3. We follow the scheme for the domains set
down at the beginning of the last section. We consider both $a$-norms
and $k$-norms, $a\in\mathbf{R}$, $k\in\mathbf{Z}$, $1\leq a,k\leq m$.
Also we use the notations (8.1) and (7.8) with
$K(a)=c_{a}(\grr\grs)^{-s(a)}$, where the constant $c_{a}$ and
polynomial $s(a)$ may be increased a finite number of times.

From (3.6) and (7.2) we have
\begin{equation}
  \|I_{1}\|_{\tilde{\grr}(1),a}=\|(I-S_{t})\barr{\del}_{X}Z\|_{\tilde{\grr}(1),a}
  \; \leq \; c_{m}\grr^{-a}t^{b-a}\|\barr{\del}_{X}Z\|_{\grr,b},
\end{equation}
for $0< t \leq \hat{c}^{-1}\grr\grs$, $a\leq b$.  We set $b=a+\grm \leq m$, then
\begin{equation}
  \|I_{1}\|_{\tilde{\grr}(1),a} \; \leq \; c_{m}\grr^{-a}
   t^{\grm}\|\barr{\del}_{X}Z\|_{\grr,a+\grm},
\end{equation}
which, of course, is also valid for $a=k$.

For the commutator term $I_{3}= [S_{t},\barr{\del}_{M}]P\barr{\del}_{X}Z$,
lemma 7.1 gives
\begin{eqnarray}
 \|I_{3}\|_{\tilde{\grr}(2),k} & \leq & c_{m}\grr^{-k-\gra}t^{\gra}
 \{\|P\barr{\del}_{X}Z\|_{\tilde{\grr}(1),k+\gra} + \|h\|_{\tilde{\grr}(1),k+2}
  \|P\barr{\del}_{X}Z\|_{\tilde{\grr}(1),\gra}\} , \\
 \|I_{3}\|_{\tilde{\grr}(2),a} & \leq & c_{m}\grr^{-a}
 \{\|P\barr{\del}_{X}Z\|_{\tilde{\grr}(1),a} + \|h\|_{\tilde{\grr}(1),a+2}
  \|P\barr{\del}_{X}Z\|_{\tilde{\grr}(1),0}\} .
\end{eqnarray}
In (9.3) we take $\gra =1/2$, $\grb=5/2$ and use (7.9), (7.10),
\begin{equation}
\begin{array}{rcl}
  \|P\barr{\del}_{X}Z\|_{\tilde{\grr}(1),k+1/2} & \leq & K(k)
  (1+\|h\|_{\grr,\grb})(\|\barr{\del}_{X}Z\|_{\grr,k} + \\  &  & +
  \|h\|_{\grr,k+\grb}\|\barr{\del}_{X}Z\|_{\grr,0}), \\
  \|P\barr{\del}_{X}Z\|_{\tilde{\grr}(1),1/2} & \leq & c_{1}
      \grr^{-3/2}\grs^{-2n}\|\barr{\del}_{X}Z\|_{\grr,0}.
\end{array}
\end{equation}
Thus,
\begin{equation}
   \|I_{3}\|_{\tilde{\grr}(2),k} \; \leq \; K(m)
     (1+\|h\|_{\grr,\grb})
       t^{1/2} \{ \|\barr{\del}_{X}Z\|_{\grr,k}
  + \|h\|_{\grr,k+\grb}\|\barr{\del}_{X}Z\|_{\grr,0} \}.
\end{equation}
In (9.4) we use instead (7.8) to get
\begin{equation}
  \|I_{3}\|_{\tilde{\grr}(2),a} \; \leq \; K(a)
  \{ \|\barr{\del}_{X}Z\|_{\grr,a}
  + \|h\|_{\grr,a+2}\|\barr{\del}_{X}Z\|_{\grr,0} \} .
\end{equation}

Next, $I_{2}\approx (X_{\barr{\gra}}-Y_{\barr{\gra}})S_{t}P\barr{\del}_{X}Z$,
so lemma 6.1 gives
\begin{equation}
\begin{array}{rcl}
  \|I_{2}\|_{\tilde{\grr}(2),a} &  \leq &  c_{a}\grr^{-2a}
   \{\|\barr{\del}_{X}Z\|_{\tilde{\grr}(2),0}
  \|S_{t}P\barr{\del}_{X}Z\|_{\tilde{\grr}(2),a+1} + \\  &  &
   (\|\barr{\del}_{X}Z\|_{\tilde{\grr}(2),0}\|h\|_{\tilde{\grr}(2),a+1} +
   \|\barr{\del}_{X}Z\|_{\tilde{\grr}(2),a})
       \|S_{t}P\barr{\del}_{X}Z\|_{\tilde{\grr}(2),1}\}.
\end{array}
\end{equation}
First we take $a=k$ in (9.8), and use (7.2) with $a=k+1$, $b=k+1/2$,
and (7.9),
\begin{equation}
\begin{array}{rcl}
  \|S_{t}P\barr{\del}_{X}Z\|_{\tilde{\grr}(2),k+1} &  \leq & c_{k}\grr^{-k-1}
   t^{-1/2}\|P\barr{\del}_{X}Z\|_{\tilde{\grr}(1),k+1/2}  \\ &  \leq &
          K(k)(1+\|h\|_{\grr,\grb})t^{-1/2}
         ( \|\barr{\del}_{X}Z\|_{\grr,k} + \\ &  & +
  \|h\|_{\grr,k+\grb}\|\barr{\del}_{X}Z\|_{\grr,0} ), \\
  \|S_{t}P\barr{\del}_{X}Z\|_{\tilde{\grr}(2),1} &  \leq & K(1)
  (1+\|h\|_{\grr,\grb})^{2}t^{-1/2}\|\barr{\del}_{X}Z\|_{\grr,0}.
\end{array}
\end{equation}
From this we get
\begin{equation}
\begin{array}{rcl}
  \|I_{2}\|_{\tilde{\grr}(2),k} &  \leq & K(k)
            (1+\|h\|_{\grr,\grb})^{2}
         t^{-1/2}\|\barr{\del}_{X}Z\|_{\grr,0}
      \{\|\barr{\del}_{X}Z\|_{\grr,k} + \\ &  & +
   \|h\|_{\grr,k+\grb}\|\barr{\del}_{X}Z\|_{\grr,0}\}.
\end{array}
\end{equation}
With $a$ real in (9.8) we use $b=a<a+1$ in (7.2) and (7.8),
\begin{eqnarray}
  \|S_{t}P\barr{\del}_{X}Z\|_{\tilde{\grr}(2),a+1} &  \leq &  K(a)t^{-1}
  \{ \|\barr{\del}_{X}Z\|_{\grr,a} +\|h\|_{\grr,a+2}
    \|\barr{\del}_{X}Z\|_{\grr,0} \} , \\
  \|S_{t}P\barr{\del}_{X}Z\|_{\tilde{\grr}(2),1} &  \leq & K(0)t^{-1}
   \|\barr{\del}_{X}Z\|_{\grr,0} .
\end{eqnarray}
This results in
\begin{equation}
  \|I_{2}\|_{\tilde{\grr}(2),a} \;   \leq \;  K(a) t^{-1}
  \|\barr{\del}_{X}Z\|_{\grr,0} \{\|\barr{\del}_{X}Z\|_{\grr,a} +
   \|h\|_{\grr,a+2} \|\barr{\del}_{X}Z\|_{\grr,0} \} .
\end{equation}

Finally we estimate $I_{4}=S_{t}Q\barr{\del}_{M}\barr{\del}_{X}Z$.
With $a=k\geq1$, $b = k-1/2$ in (7.2), and (7.9), we get
\begin{equation}
\begin{array}{rcl}
  \|I_{4}\|_{\tilde{\grr}(2),k} & \leq & c_{k}\grr^{-k}t^{b-k}
     \|Q\barr{\del}_{M}\barr{\del}_{X}Z\|_{\tilde{\grr}(1),b} \\
  & \leq & K(k)  (1 + \|h\|_{\grr,\grb})
t^{-1/2}\{\|\barr{\del}_{M}\barr{\del}_{X}Z\|_{\grr,k-1} + \\ &  & +
  \|h\|_{\grr,k-1+\grb}\|\barr{\del}_{M}\barr{\del}_{X}Z\|_{\grr,0}\}.
\end{array}
\end{equation}
With $b=a-1$ in (7.2), and (7.8), we get
\begin{equation}
  \|I_{4}\|_{\tilde{\grr}(2),a} \; \leq \; K(a)t^{-1} \{
  \|\barr{\del}_{M}\barr{\del}_{X}Z\|_{\grr,a-1} +
  \|h\|_{\grr,a+1}\|\barr{\del}_{M}\barr{\del}_{X}Z\|_{\grr,0} \}.
\end{equation}
Using lemma 6.1, we get
\begin{equation}
\begin{array}{rcl}
  \|\barr{\del}_{M}\barr{\del}_{X}Z\|_{\grr,a-1} & = &
  \|(\barr{\del}_{M}-\barr{\del}_{X})\barr{\del}_{X}Z\|_{\grr,a-1} \\
  & \leq & c_{a}\grr^{-2(a-1)} \{ \|\barr{\del}_{X}Z\|_{\grr,0}
      \|\barr{\del}_{X}Z\|_{\grr,a} + \\ &  & +
  (\|\barr{\del}_{X}Z\|_{\grr,0}\|h\|_{\grr,a} + \|\barr{\del}_{X}Z\|_{\grr,a-1})
   \|\barr{\del}_{X}Z\|_{\grr,1}  \} .
\end{array}
\end{equation}
We may use the convexity estimate (5.3) to absorb the last term into the
first.  We get
\begin{equation}
\begin{array}{rcl}
   \|\barr{\del}_{M}\barr{\del}_{X}Z\|_{\grr,a-1} & \leq &
   c_{a}\grr^{-3a} \|\barr{\del}_{X}Z\|_{\grr,0}
   (\|\barr{\del}_{X}Z\|_{\grr,a} + \|h\|_{\grr,a}
   \|\barr{\del}_{X}Z\|_{\grr,1} ) , \\
  \|\barr{\del}_{M}\barr{\del}_{X}Z\|_{\grr,0} & \leq & c_{0}
   \|\barr{\del}_{X}Z\|_{\grr,0}\|\barr{\del}_{X}Z\|_{\grr,1} .
\end{array}
\end{equation}
This leads to
\begin{eqnarray}
  \|I_{4}\|_{\tilde{\grr}(2),k} & \leq & K(k)
    (1+\|h\|_{\grr,\grb})t^{-1/2}\|\barr{\del}_{X}Z\|_{\grr,0}
       \{\|\barr{\del}_{X}Z\|_{\grr,k} +    \\  &  & + \nonumber
     \|h\|_{\grr,k-1+\grb}\|\barr{\del}_{X}Z\|_{\grr,1} \} ; \\
  \|I_{4}\|_{\tilde{\grr}(2),a} & \leq & K(a) t^{-1}
    \|\barr{\del}_{X}Z\|_{\grr,0} \{ \|\barr{\del}_{X}Z\|_{\grr,a} +
     \|h\|_{\grr,a+1}\|\barr{\del}_{X}Z\|_{\grr,1} \} .
\end{eqnarray}

Combining (9.2) with $a=k$, (9.6), (9.10), and (9.18),
and simplifying slightly gives the following fine four-term estimate.
\begin{lemma}
With $k\in\mathbf{Z}$, $1\leq k$,  $k+\grm\leq m$, $\grb=5/2$,
we have
\begin{equation}
\begin{array}{rcl}
  \|\barr{\del}_{X}(Z+F)\|_{\tilde{\grr}(2),k} & \leq & K(m)
        (1 + \|h\|_{\grr,\grb})^{2}  \{
    t^{\grm}\|\barr{\del}_{X}Z\|_{\grr,k+\grm} + \\ &  &  +
     t^{1/2}(\|\barr{\del}_{X}Z\|_{\grr,k} +
       \|h\|_{\grr,k+\grb}\|\barr{\del}_{X}Z\|_{\grr,0}) + \\  & & +
     t^{-1/2}\|\barr{\del}_{X}Z\|_{\grr,0}
      (\|\barr{\del}_{X}Z\|_{\grr,k} +
       \|h\|_{\grr,k+\grb}\|\barr{\del}_{X}Z\|_{\grr,1}) \} ;
\end{array}
\end{equation}
\end{lemma}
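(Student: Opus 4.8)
The plan is to assemble Lemma 9.1 by combining the four separate term estimates already derived in this section, following the 4-term decomposition $\barr{\del}_{X}Z_{*} = I_{1} + I_{2} + I_{3} + I_{4}$ from (3.5)--(3.6). The quantity to bound, $\barr{\del}_{X}(Z+F)$, is exactly $\barr{\del}_{X}Z_{*}$ before renormalization, so it suffices to add the four estimates $\|I_{j}\|_{\tilde{\grr}(2),k}$ and absorb constants into a single $K(m)$. Specifically, I would invoke (9.2) with $a=k$ for $I_{1}$, giving the $t^{\grm}\|\barr{\del}_{X}Z\|_{\grr,k+\grm}$ term; estimate (9.6) for $I_{3}$, giving $(1+\|h\|_{\grr,\grb})t^{1/2}(\|\barr{\del}_{X}Z\|_{\grr,k} + \|h\|_{\grr,k+\grb}\|\barr{\del}_{X}Z\|_{\grr,0})$; estimate (9.10) for $I_{2}$, giving $(1+\|h\|_{\grr,\grb})^{2}t^{-1/2}\|\barr{\del}_{X}Z\|_{\grr,0}(\|\barr{\del}_{X}Z\|_{\grr,k} + \|h\|_{\grr,k+\grb}\|\barr{\del}_{X}Z\|_{\grr,0})$; and the first estimate in (9.18) for $I_{4}$, giving $(1+\|h\|_{\grr,\grb})t^{-1/2}\|\barr{\del}_{X}Z\|_{\grr,0}(\|\barr{\del}_{X}Z\|_{\grr,k} + \|h\|_{\grr,k-1+\grb}\|\barr{\del}_{X}Z\|_{\grr,1})$.

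Next I would reconcile the slightly different shapes of these four contributions into the uniform right-hand side displayed in (9.21). The $I_{2}$ and $I_{4}$ terms carry the $t^{-1/2}\|\barr{\del}_{X}Z\|_{\grr,0}$ prefactor; the $I_{4}$ contribution has $\|h\|_{\grr,k-1+\grb}$ rather than $\|h\|_{\grr,k+\grb}$, which is harmless since it is smaller (monotonicity of $\|h\|_{\grr,\cdot}$), and after replacing $k-1+\grb$ by $k+\grb$ both $t^{-1/2}$ terms merge. The $I_{3}$ contribution has the $t^{1/2}$ prefactor and forms the middle line of (9.21). Since $0<t\le 1$ throughout (see the discussion before (8.3)), the bare $(1+\|h\|_{\grr,\grb})$ and $(1+\|h\|_{\grr,\grb})^{2}$ factors are all dominated by $(1+\|h\|_{\grr,\grb})^{2}$, and the various constants $c_{a},c_{k},c_{m}$ and polynomials $s(\cdot)$ appearing in the different $K(\cdot)$'s (which differ only because they were increased a finite number of times) are absorbed into a single $K(m)$, as the paper explicitly permits at the start of section 9 and at the end of section 7.

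**The one genuine subtlety** is purely bookkeeping rather than analytic: I must make sure the $\grr^{-a}$-type powers from $I_{1}$ in (9.2), which are written as $c_{m}\grr^{-a}$ rather than $K(a)$, and the $\grr^{-2(a-1)}$, $\grr^{-3a}$ powers produced inside (9.17)--(9.18), are all absorbed into $K(m)=c_{m}(\grr\grs)^{-s(m)}$ by enlarging the polynomial $s(m)$. Since $\grs\le 1$, a factor $\grr^{-N}$ is bounded by $(\grr\grs)^{-N}$, so this is legitimate. One should also check that each of (9.6), (9.10), (9.18) was derived under the hypotheses of Lemma 4.2 together with $\|\barr{\del}_{X}Z\|_{\grr,1}\le 1/2$, which are precisely the standing assumptions at the start of section 9; thus no new hypothesis is needed, and the stated restrictions $1\le k$, $k+\grm\le m$ come exactly from requiring $b=a+\grm\le m$ in (9.1)--(9.2) (for $I_{1}$) and $k\le m$ in the $P$, $Q$ estimates of Lemma 7.2. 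This yields (9.21) and completes the proof of the lemma.
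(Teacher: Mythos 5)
Your proposal is correct and matches the paper's own proof exactly: the paper obtains Lemma 9.1 by "combining (9.2) with $a=k$, (9.6), (9.10), and (9.18), and simplifying slightly," which is precisely the assembly you carry out, including the absorption of $\|h\|_{\grr,k-1+\grb}\le\|h\|_{\grr,k+\grb}$ and $\|\barr{\del}_{X}Z\|_{\grr,0}\le\|\barr{\del}_{X}Z\|_{\grr,1}$ into a single $t^{-1/2}$ term and of the $\grr$-powers into an enlarged $K(m)$.
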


Combining (9.2) with $\grm=0$, (9.7), (9.13), and (9.19) gives
the following coarse four-term estimate.
\begin{lemma} For $1\leq a\leq m$,
\begin{eqnarray}
  \|\barr{\del}_{X}(Z+F)\|_{\tilde{\grr}(2),a} & \leq & K(m)
  (1+ t^{-1}\|\barr{\del}_{X}Z\|_{\grr,0})\cdot \\ &  &  \nonumber
   \cdot \{ \|\barr{\del}_{X}Z\|_{\grr,a} + \|h\|_{\grr,a+2}
     \|\barr{\del}_{X}Z\|_{\grr,1} \}.
\end{eqnarray}
\end{lemma}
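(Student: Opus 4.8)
The plan is to feed the four-term decomposition of Section~3 into the term-by-term estimates already assembled above. Recall from (3.5)--(3.6) that, with $F=-S_{t}P\barr{\del}_{X}Z$ as in (3.2), one has $\barr{\del}_{X}(Z+F)=I_{1}+I_{2}+I_{3}+I_{4}$, the four pieces being $(I-S_{t})\barr{\del}_{X}Z$, the term $(\barr{\del}_{M}-\barr{\del}_{X})S_{t}P\barr{\del}_{X}Z$, the Friedrichs commutator $[S_{t},\barr{\del}_{M}]P\barr{\del}_{X}Z$, and $S_{t}Q(\barr{\del}_{M}-\barr{\del}_{X})\barr{\del}_{X}Z$. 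For $1\le a\le m$ I would bound $\|I_{1}\|_{\tilde{\grr}(2),a}\le\|I_{1}\|_{\tilde{\grr}(1),a}$, since $D_{\tilde{\grr}(2)}\subset D_{\tilde{\grr}(1)}$, and then quote (9.2) with $\grm=0$; for $I_{3}$ I would quote (9.7); for $I_{2}$ the coarse estimate (9.13); and for $I_{4}$ the second (coarse) inequality in (9.19). Adding the four resulting bounds on $D_{\tilde{\grr}(2)}$ is then essentially the whole content of the lemma, and what remains is bookkeeping.

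I would organize that bookkeeping in three moves. First, every $h$-dependent product on the right of (9.7), (9.13), (9.19) has the shape $\|h\|_{\grr,a+2}\|\barr{\del}_{X}Z\|_{\grr,0}$ or $\|h\|_{\grr,a+1}\|\barr{\del}_{X}Z\|_{\grr,1}$; using the trivial monotonicities $\|\barr{\del}_{X}Z\|_{\grr,0}\le\|\barr{\del}_{X}Z\|_{\grr,1}$ and $\|h\|_{\grr,a+1}\le\|h\|_{\grr,a+2}$, each of these is dominated by $\|h\|_{\grr,a+2}\|\barr{\del}_{X}Z\|_{\grr,1}$, and every bare term by $\|\barr{\del}_{X}Z\|_{\grr,a}$, so the bracket on the right of (9.21) absorbs every summand. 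Second, $I_{1}$ and $I_{3}$ carry no negative power of $t$ (the factor is $t^{0}$), while $I_{2}$ and $I_{4}$ carry exactly $t^{-1}\|\barr{\del}_{X}Z\|_{\grr,0}$, so the four contributions together are controlled by $(1+t^{-1}\|\barr{\del}_{X}Z\|_{\grr,0})$ times that bracket. Third, the prefactors $c_{m}\grr^{-a}$ from (9.2), the $\grr^{-2a}$-type factors, and each $K(a)=c_{a}(\grr\grs)^{-s(a)}$ are, for $1\le a\le m$ and $\grr\grs<1$, dominated by a single $K(m)=c_{m}(\grr\grs)^{-s(m)}$ after enlarging $c_{m}$ and the polynomial $s$ a finite number of times, as permitted by the convention fixed at the start of the section; the extra numerical factor of $2$ from having two $t^{-1}$ terms is absorbed likewise. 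This yields (9.21).

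There is no genuine obstacle here: the delicate work lies in the four estimates (9.2), (9.7), (9.13), (9.19) already established, and this lemma is merely their assembly. The only points needing a little care are (i) keeping straight the nested domains $\tilde{\grr}(1)\ge\tilde{\grr}(2)$, so that restricting $I_{1}$ from $D_{\tilde{\grr}(1)}$ to $D_{\tilde{\grr}(2)}$ is legitimate, and (ii) verifying that collapsing all constants and $\grr,\grs$-powers into one $K(m)$ conceals no power of $\grr\grs$ growing with $a$ beyond $s(m)$ --- which it does not, since every $s(a)$ appearing is a fixed polynomial evaluated at some argument $\le m$.
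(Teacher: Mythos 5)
Your proposal is correct and follows exactly the route the paper takes: the paper's own text simply says ``Combining (9.2) with $\grm=0$, (9.7), (9.13), and (9.19) gives the following coarse four-term estimate,'' leaving the assembly to the reader, and your three bookkeeping moves (monotonicity in the $h$-norm and $\barr{\del}_{X}Z$-norm indices to absorb every summand into the bracket, sorting terms by whether they carry a $t^{-1}\|\barr{\del}_{X}Z\|_{\grr,0}$ factor, and collapsing all $K(a)$, $c_m\grr^{-a}$ prefactors into one $K(m)$ under the section's convention) are precisely the elided details.
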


In the general estimate of lemma 6.2,  we take $\grr_{1}=\grr(1-5\grs)$ and the
domain $D_{\grr_{1}}(h_{1})$ on the left hand side.  The first term on
the right is estimated by (9.20) for the following, which constitutes
the fine estimate for $\barr{\del}_{X^{1}}Z_{1}$.
\begin{lemma}
\begin{equation}
\begin{array}{rcl}
  \|\barr{\del}_{X^{1}}Z_{1}\|_{\grr_{1},k} & \leq & K(m)
           (1+\|h\|_{\grr,\grb})^{2} \{
  t^{\grm}\|\barr{\del}_{X}Z\|_{\grr,k+\grm} + \\  &  & +
        (t^{1/2} + t^{-1/2}\|\barr{\del}_{X}Z\|_{\grr,1} +
         t^{-1}\|\barr{\del}_{X}Z\|_{\grr,0} )\cdot \\  &  & \cdot
     (\|\barr{\del}_{X}Z\|_{\grr,k} +
       \|h\|_{\grr,k+\grb}\|\barr{\del}_{X}Z\|_{\grr,0}) \} .
\end{array}
\end{equation}
\end{lemma}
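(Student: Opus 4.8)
Lemma 9.3 is the assembly step: the plan is to insert the fine four-term estimate (9.20) together with the $F$-estimates of section 8 into the general error estimate (6.3). First I would apply lemma 6.2 with $a=k$, putting $\grr_{1}=\grr(1-5\grs)$ and the domain $D_{\grr_{1}}(h_{1})$ on the left and the chain of shrinking domains set up at the start of section 8 on the right, so that the $F$-factors carry $\|\cdot\|_{\tilde{\grr}(2),\cdot}$ norms while $h$ and $\barr{\del}_{X}Z$ keep $\|\cdot\|_{\grr,\cdot}$ norms. This reduces the claim to bounding the four summands of (6.3): the leading term $\|\barr{\del}_{X}(Z+F)\|_{\tilde{\grr}(2),k}$; the term $\|F\|_{\tilde{\grr}(2),1}\|\barr{\del}_{X}Z\|_{\grr,k}$; the term $(\|\barr{\del}_{X}Z\|_{\grr,0}+\|F\|_{\tilde{\grr}(2),1})(\|F\|_{\tilde{\grr}(2),k+1}+\|F\|_{\tilde{\grr}(2),1}\|h\|_{\grr,k+1})$; and the term $\|\barr{\del}_{X}Z\|_{\grr,1}(\|F\|_{\tilde{\grr}(2),k}+\|F\|_{\tilde{\grr}(2),1}\|h\|_{\grr,k})$.

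For the leading term I would substitute (9.20) directly; its three summands already have the shape required by (9.22), using $\|\barr{\del}_{X}Z\|_{\grr,0}\le\|\barr{\del}_{X}Z\|_{\grr,1}$ to rewrite its last summand. For the order-$1$ norm of $F$ I would always use (8.2), which carries no $h$; for $\|F\|_{\tilde{\grr}(2),k}$ I would use (8.4) with $b=k$; and for $\|F\|_{\tilde{\grr}(2),k+1}$ --- where the choice $b=k+1$ is in general inadmissible, since only $k\le m$ is assumed --- I would use the half-integer estimate (8.5) with $l=k$, which legitimately applies and yields only the factor $t^{-1/2}$ together with $\barr{\del}_{X}Z$ in its order-$k$ norm. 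The point of keeping $\|\barr{\del}_{X}Z\|_{\grr,0}+\|F\|_{\tilde{\grr}(2),1}$ as a genuine small factor, rather than bounding it by $1$, is that combined with (8.2) it converts the bare $t^{-1/2}$ and $t^{-1}$ arising from the $F$-estimates into the admissible products $t^{-1/2}\|\barr{\del}_{X}Z\|_{\grr,1}(\cdots)$ and $t^{-1}\|\barr{\del}_{X}Z\|_{\grr,0}(\cdots)$ of (9.22).

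Collecting terms then uses only elementary facts: $0<t<1$ (so $1\le t^{-1/2}\le t^{-1}$); the standing smallness bounds $\|\barr{\del}_{X}Z\|_{\grr,0}\le\|\barr{\del}_{X}Z\|_{\grr,1}\le1/2$, $\|h\|_{\grr,2}<1/2$, and (8.3); the monotonicity $\|h\|_{\grr,k+1}\le\|h\|_{\grr,k+2}\le\|h\|_{\grr,k+\grb}$, valid since $\grb=5/2$; and the convexity estimate (5.3) to push all interior $h$-factors into the single high norm $\|h\|_{\grr,k+\grb}$, the rest of the $h$-dependence being absorbed into the coefficient $(1+\|h\|_{\grr,\grb})^{2}$ (no $F$-term contributes more than one power of $1+\|h\|_{\grr,\grb}$). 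Since $k\le m$, the finitely many prefactors $c_{a}\grr^{-a_{2}}$ and $K(a)=c_{a}(\grr\grs)^{-s(a)}$ produced along the way collapse into a single $K(m)=c_{m}(\grr\grs)^{-s(m)}$ after enlarging $c_{m}$ and $s(m)$, as the convention of section 9 permits; this yields (9.22).

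I expect the one genuinely delicate point to be the cross term $\|F\|_{\tilde{\grr}(2),1}\|F\|_{\tilde{\grr}(2),k+1}$: estimating $\|F\|_{\tilde{\grr}(2),k+1}$ by (8.4) at order $k+1$ would force the non-integer norm $\|\barr{\del}_{X}Z\|_{\grr,k+1}$ (not controlled by the right side of (9.22) when $\grm<1$) and a power $t^{-1}$, so that with the $t^{-1/2}$ from (8.2) one would overshoot to $t^{-3/2}$; using (8.5) instead keeps this at $t^{-1}\|\barr{\del}_{X}Z\|_{\grr,0}(\cdots)$. All the rest is bookkeeping --- one simply verifies that every surviving term lands on exactly one of the three admissible prefactors $t^{1/2}$, $t^{-1/2}\|\barr{\del}_{X}Z\|_{\grr,1}$, $t^{-1}\|\barr{\del}_{X}Z\|_{\grr,0}$.
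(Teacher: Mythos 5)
Your proposal is correct and follows essentially the same route as the paper's proof: apply the general estimate (6.3) at $a=k$, insert (9.20) for the leading term, use (8.2) for $\|F\|_{\tilde{\grr}(2),1}$, (8.4) with $a=b=k$ for $\|F\|_{\tilde{\grr}(2),k}$, and (8.5) with $a=k+1$, $l=k$ for $\|F\|_{\tilde{\grr}(2),k+1}$, then collect using (9.24)-type simplifications. You also correctly pinpoint the one nontrivial choice, namely that (8.5) must be used for $\|F\|_{\tilde{\grr}(2),k+1}$ to avoid both the non-integer norm $\|\barr\del_{X}Z\|_{\grr,k+1}$ and an overshoot to $t^{-3/2}$, which is exactly what the paper does; the paper's citation of (8.3) for the 1-norm is evidently a typo for (8.2), as your reading reflects.
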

\begin{proof}
We must estimate the remaining terms in (6.3) with $a=k$.  We use
(8.4) with $a=b=k$ for $\|F\|_{\tilde{\grr}(2),k}$, and (8.3) for
 $\|F\|_{\tilde{\grr}(2),1}$.
This gives
\begin{eqnarray}
  \|F\|_{\tilde{\grr}(2), k} + \|F\|_{\tilde{\grr}(2),1}\|h\|_{\grr,k}
    & \leq & K(k) \{
   \|\barr{\del}_{X}Z\|_{\grr,k} + \\ &  & \nonumber
  +2t^{-1/2}\|\barr{\del}_{X}Z\|_{\grr,0}\|h\|_{\grr,k+2} \} , \\
 \|\barr{\del}_{X}Z\|_{\grr,0} + \|F\|_{\tilde{\grr}(2),1} & \leq &
  2K(1)t^{-1/2}\|\barr{\del}_{X}Z\|_{\grr,0} .
\end{eqnarray}
For $\|F\|_{\tilde{\grr}(2),k+1}$ we use (8.5) with $a=k+1$, $l=k$,
to get
\begin{eqnarray}
  \|F\|_{\tilde{\grr}(2) ,k+1} + \|F\|_{\tilde{\grr}(2),1}\|h\|_{\grr,k+1}
   & \leq & K(k+1)(1+\|h\|_{\grr,\grb})t^{-1/2}\cdot \\ &  & \nonumber
   \cdot(\|\barr{\del}_{X}Z\|_{\grr,k} +
     +  \|h\|_{\grr,k+\grb}\|\barr{\del}_{X}Z\|_{\grr,0}) \} .
\end{eqnarray}
Combining gives the lemma.
\end{proof}

For a coarse estimate of the error, we combine lemmas 8.2 and  9.2,
and simplify.
\begin{lemma}
For $1\leq a\leq m$,
\begin{eqnarray}
    \|\barr{\del}_{X^{1}}Z_{1}\|_{\grr_{1},a} & \leq & K(m+1)
    (1 + (1+\|h\|_{\grr,3})^{2}t^{-1}\|\barr{\del}_{X}Z\|_{\grr,1} )\cdot
  \\ &  & \nonumber \cdot
    (\|\barr{\del}_{X}Z\|_{\grr,a}+
                  \|h\|_{\grr,a+2}\|\barr{\del}_{X}Z\|_{\grr,1}).
\end{eqnarray}
\end{lemma}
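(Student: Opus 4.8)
The plan is to chain together the coarse estimates already assembled in Sections~6--9. Lemma~8.2 bounds $\|\barr{\del}_{X^{1}}Z_{1}\|_{\grr_{1},a}$ by a term involving $\|\barr{\del}_{X}(Z+F)\|_{\tilde{\grr}(2),a}$ together with quadratic cross-terms in $\barr{\del}_{X}Z$ and $h$. The first of those terms is exactly what Lemma~9.2 controls: a coarse four-term estimate giving $\|\barr{\del}_{X}(Z+F)\|_{\tilde{\grr}(2),a} \leq K(m)(1+t^{-1}\|\barr{\del}_{X}Z\|_{\grr,0})(\|\barr{\del}_{X}Z\|_{\grr,a} + \|h\|_{\grr,a+2}\|\barr{\del}_{X}Z\|_{\grr,1})$. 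So the first step is simply to substitute Lemma~9.2 into Lemma~8.2.

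After that substitution, I would collect the remaining pieces of the Lemma~8.2 right-hand side. The factor $\|F\|_{\tilde{\grr}(2),1}$ is bounded via (8.3) (or directly via (8.2)/(7.10)) by roughly $t^{-1/2}\|\barr{\del}_{X}Z\|_{\grr,1}$ up to $K(1)$; in particular, under the running smallness hypotheses $\|F\|_{\tilde{\grr}(2),1}$ is $\leq 1$, so any place it appears additively next to $\|\barr{\del}_{X}Z\|_{\grr,1}$ or $\|\barr{\del}_{X}Z\|_{\grr,0}$ can be absorbed. For the term $(\|\barr{\del}_{X}Z\|_{\grr,0}+\|F\|_{\tilde{\grr}(2),1})(\|F\|_{\tilde{\grr}(2),a+1}+\|F\|_{\tilde{\grr}(2),1}\|h\|_{\grr,a+1})$, I would use (8.4) with $b=a$ to bound $\|F\|_{\tilde{\grr}(2),a+1} \leq K(a)t^{-1}(\|\barr{\del}_{X}Z\|_{\grr,a}+\|h\|_{\grr,a+2}\|\barr{\del}_{X}Z\|_{\grr,0})$, and then the leading factor $\|\barr{\del}_{X}Z\|_{\grr,0}$ combines with $t^{-1}$ to produce the shape $t^{-1}\|\barr{\del}_{X}Z\|_{\grr,1}$ (after replacing $\|\barr{\del}_{X}Z\|_{\grr,0}$ by the larger $\|\barr{\del}_{X}Z\|_{\grr,1}$). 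The $\|F\|_{\tilde{\grr}(2),1}\|h\|_{\grr,a+1}$ piece, multiplied out, gives a $t^{-1}\|\barr{\del}_{X}Z\|_{\grr,1}\|h\|_{\grr,a+1}$ contribution, which is dominated by $t^{-1}\|\barr{\del}_{X}Z\|_{\grr,1}\|h\|_{\grr,a+2}$.

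The only real bookkeeping obstacle is the management of the $\grr$- and $\grs$-powers: Lemma~8.2 carries a prefactor $c_{a}\grr^{-a_{2}}$ with $a_{2}$ quadratic in $a$, while Lemma~9.2 carries $K(m)=c_{m}(\grr\grs)^{-s(m)}$, and (8.4) contributes further powers. These all combine into a single factor of the form $c_{a}(\grr\grs)^{-s'(a)}$, which by the convention stated at the end of Section~7 ("we shall increase $c_{a}$ and $s(a)$ a finite number of times, but keep the same notation $K(a)$") we rename $K(m+1)$ — the index is bumped to $m+1$ because $\|F\|_{\tilde{\grr}(2),a+1}$ forces us to use $(a+1)$-level estimates, hence $a+1 \leq m+1$ in the range. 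Thus the $\grr,\grs$-tracking is the main place to be careful, but it is purely routine given the stated conventions.

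Finally I would collect terms: every surviving contribution is of the form (constant or $t^{-1}\|\barr{\del}_{X}Z\|_{\grr,1}$, possibly times $(1+\|h\|_{\grr,3})^{2}$) acting on $(\|\barr{\del}_{X}Z\|_{\grr,a}+\|h\|_{\grr,a+2}\|\barr{\del}_{X}Z\|_{\grr,1})$. Factoring out the common bracket $(\|\barr{\del}_{X}Z\|_{\grr,a}+\|h\|_{\grr,a+2}\|\barr{\del}_{X}Z\|_{\grr,1})$ and the common prefactor $K(m+1)(1+(1+\|h\|_{\grr,3})^{2}t^{-1}\|\barr{\del}_{X}Z\|_{\grr,1})$ yields precisely the claimed inequality (9.25). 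The expected main obstacle is not conceptual but verifying that no cross-term of a genuinely different shape survives — in particular checking that the $I_{1}$ (smoothing-loss) contribution, which in the coarse estimate appears with $\grm=0$ hence no gain, is indeed absorbed into the $\|\barr{\del}_{X}Z\|_{\grr,a}$ bracket rather than forcing an extra independent summand.
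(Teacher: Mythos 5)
Your plan — substitute Lemma~9.2 into the first term of Lemma~8.2, then absorb constants and monotonically dominate $\|\barr{\del}_{X}Z\|_{\grr,0}\leq\|\barr{\del}_{X}Z\|_{\grr,1}$ and $1\leq(1+\|h\|_{\grr,3})^{2}$ — is exactly what the paper does ("combine lemmas 8.2 and 9.2, and simplify"). One small correction: Lemma~8.2's right-hand side already has no explicit $\|F\|$ factors (those were absorbed via (8.3)--(8.5) in the derivation of Lemma~8.2 itself), so your second paragraph re-deriving bounds for $\|F\|_{\tilde{\grr}(2),1}$ and $\|F\|_{\tilde{\grr}(2),a+1}$ is redundant rather than needed, but it does not affect correctness.
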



\section{The sequence of embeddings. Summary.}
\setcounter{equation}{0}
Now we begin the process of inductively constructing a sequence of
approximately holomorphic embeddings and showing convergence.  For $j\geq0$
we set, according to the second paragraph of section 8,
\begin{equation}
  \grr_{j+1}=\grr_{j}(1-5\grs_{j}), \; \;
   \tilde{\grr}_{j}(l)=\grr_{j}(1-\grs_{j})^{l}, \; \;
   \grs_{j+1}=5^{-1}\grs_{j},
\end{equation}
with $\grs_{0}=5^{-2}$, $0<l\leq4$, and $\grr_{0}>0$  free to be determined. We
choose $\grr_{0}\leq1$ so that $D_{\grr_{0}}\subset D^{0}_{1}$, where
$D^{0}_{1}$ is the domain of lemma 1.2 . Clearly the $\grr_{j}$ decrease
to a positive limit $\grr_{\infty}>0$. We also set
\begin{equation}
  K_{j}(a) = c_{a}(\grr_{j}\grs_{j})^{-s(a)}; \; \;
  N_{j}(a)=1+\|h_{j}\|_{\grr_{j},a}; \; \;
  \grd_{j}(a) = \|\barr{\del}_{X^{j}}Z_{j}\|_{\grr_{j},a},
\end{equation}
where we must take $a\leq m$ in the last.
We note that $h_{0}=0$, and $\grd_{0}(m)$ can be made arbitrarily
small by non-isotropic dilation.  We readily see that
\begin{equation}
  K_{j+1}(a) \; \leq \; \hat{c}_{a}K_{j}(a).
\end{equation}

We shall choose the smoothing parameters $t_{j}$ by
\begin{equation}
  t_{j+1} = t^{\grk}_{j}, \; \; 1<\grk, \; \; 0<t_{0}<1,
\end{equation}
with $t_{0}$ sufficiently small. Then the $t_{j}$
strictly decrease rapidly to zero.   Notice that
$\hat{c}t_{j+1}/\grr_{j+1}\grs_{j+1} \, \leq \,
  (\hat{c}t_{j}/\grr_{j}\grs_{j})t^{\grk-1}_{0}(5/1-5\grs_{0})$,
so that
\begin{equation}
  0 < t_{j} < \hat{c}^{-1}\grr_{j}\grs_{j}, \; (\hat{c} = 5/\sqrt{2}),
\end{equation}
for all $j$, if $t_{0}$ is sufficiently small, $\grr_{0}=1$ being fixed.
(We shall have to shrink $t_{0}$ a finite number of times.)

We want to  construct $h_{j}$,  $Z_{j}\in C^{\infty}(D_{\grr_{j}})$, and
$X^{j}_{\gra}\in C^{m}(D_{\grr_{j}})$, as in (1.4), (1.5) and (2.2).
To pass from $j$ to $j+1$, we need to make the
inductive assumption
\begin{equation}
    \|h_{j}\|_{\grr_{j},2} \, \leq  \, \grg_{0} ,
\end{equation}
in order to apply lemma 2.1 and to make use of the results of section 7.
Then we may use the solution operator $P=P_{j}$ and smoothing
operator $S_{t_{j}}$ to construct
$F_{j}=-S_{t_{j}}P_{j}\barr{\del}_{X^{j}}Z_{j}$.

To construct our sequences, we shall choose a suitable integer $k$,
$1\leq k\leq m$, and a real number $a$, $2 \leq a $,
$k\leq a$.  The goal is to get convergence of $h_{j}$, $Z_{j}$ in
the H\"older class $C^{a}$, while the error $\grd_{j}(k)$ goes to
zero rapidly.  We make the second inductive assumption,
\begin{equation}
  t^{-s}_{j}\grd_{j}(k) \, \leq \, 1,
\end{equation}
for a suitable choice of $s> 1/2$.  Given this, we claim that the
conditions
\begin{equation}
   t_{j}^{-1/2}\grd_{j}(0)\leq\grg_{1}\grr_{j}^{7/2}\grs_{j}^{2n+1},  \; \; \;
   \grd_{j}(1)\leq 1,
\end{equation}
will hold for all $j$, if $t_{0}>0$ is  sufficiently small.  The
second is clear.  To see the first,  put
$Q_{j}=t_{j}^{s-1/2}\grr_{j}^{-7/2}\grs_{j}^{-2n-1}$.  By (10.7) we need
$Q_{j}\leq\grg_{1}$ for all $j$. Since $\grk > 1$, $s>1/2$ and the
$t_{j}$, $\grs_{j}$ decrease, we easily see that $Q_{0}\leq\grg_{1}$
and $Q_{j+1}/Q_{j}\leq1$, if $t_{0}$ is sufficiently small.

We start the process at $j=0$, using lemma 1.2, so that $h_{0}=0$.
Once $s$, $\grk$  and $t_{0}>0$ have been fixed,
we may apply non-isotropic dilation to make $\grd_{0}(m)$ as small
as is needed.  This will give (10.6), (10.7) for $j=0$.  We then construct
our sequences $h_{j}$, $Z_{j}$ of approximate CR embeddings, and
verify (10.6), (10.7), inductively.

From (8.6), (8.7), lemma 8.1, lemma 9.3, and lemma 9.4, respectively,
we have the following. They summarize the main estimates that have
been derived up to this point.  In them $\grb=5/2$,  $b\leq a$,
$b\leq m\in\mathbf{R}$,
$k,l$ are integers $1\leq k,l\leq m$, $k+\grm\leq m$,
$l+(1/2)\leq a$ .
\begin{eqnarray}
\|f_{j(2)}\|_{\tilde{\grr_{j}}(2),a} & \leq &
     K_{j}(a)\{t_{j}^{-1/2}\grd_{j}(0)N_{j}(a) + \\ &  & \nonumber
      + t_{j}^{b-a}(\grd_{j}(b)+ N_{j}(b+2)\grd_{j}(0)) \}, \\
\|g_{j(2)}\|_{\grr_{j+1},a} & \leq &  c_{a}\grr_{j}^{-4(a+2)}
                         \|f_{j(2)}\|_{\tilde{\grr_{j}}(2),a},
\end{eqnarray}
\begin{equation}
\|h_{j+1}-h_{j}\|_{\grr_{j+1},a} \; \leq \; K_{j}(a)
     \{t_{j}^{-1/2}\grd_{j}(0)N_{j}(a+1) +
                  t_{j}^{b-a}(\grd_{j}(b)+N_{j}(b+2)\grd_{j}(0)) \},
\end{equation}
\begin{eqnarray}
  N_{j+1}(a)  & \leq & K_{j}(a)\{N_{j}(a) +
     t_{j}^{b-a}(\grd_{j}(b)+N_{j}(b+2)\grd_{j}(0)) \} , \\
  N_{j+1}(a)  & \leq & K_{j}(a)\{N_{j}(a) +
     t_{j}^{l+(1/2)-a}(N_{j}(\grb)\grd_{j}(l)+N_{j}(l+\grb)\grd_{j}(0)) \} ,
\end{eqnarray}
\begin{eqnarray}
\grd_{j+1}(k) & \leq & K_{j}(m)N_{j}(\grb)^{2} \{
       t_{j}^{\grm}\grd_{j}(k+\grm) + \\ & & +
       (t_{j}^{1/2} + t_{j}^{-1/2}\grd_{j}(1)+t_{j}^{-1}\grd_{j}(0))
        \grd_{j}(k)N_{j}(k+\grb) \} , \nonumber
\end{eqnarray}
\begin{eqnarray}
\grd_{j+1}(k+\grm) & \leq & K_{j}(m+1)(1+N_{j}(3)^{2}t_{j}^{-1}\grd_{j}(1))\cdot
\\ &  &  \nonumber \cdot \{ \grd_{j}(k+\grm) + N_{j}(k+\grm+2)\grd_{j}(1)\} , \\
\grd_{j+1}(m) & \leq & K_{j}(m+1)(1+N_{j}(3)^{2}t_{j}^{-1}\grd_{j}(1))\cdot
  \\ &  &  \nonumber \cdot \{ \grd_{j}(m) + N_{j}(m+2)\grd_{j}(1)\} .
\end{eqnarray}


\section{The main inductive hypotheses.}
\setcounter{equation}{0}

In this section we verify inductively (10.6) and (10.7) for all $j$.
For this we need to control the possible growth of the following norms.
\begin{lemma}
  If $t_{j}^{-s}\grd_{j}(k)\leq1$, with $s\geq2$, then
\begin{equation}
  N_{j+1}(a) \; \leq \; 3K_{j}(a)N_{j}(a) ,
\end{equation}
for $a=3$, $a=\grb$, $a=k+2$,  $a=k+\grb$.
\end{lemma}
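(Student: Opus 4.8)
The plan is to prove Lemma 11.1 by specializing the recursive estimate (10.13) for $N_{j+1}(a)$ and using the inductive assumption $t_j^{-s}\delta_j(k)\leq1$ with $s\geq2$, together with the smallness conditions (10.8) on $\delta_j$ and the relations among the parameters $\grr_j,\grs_j,t_j$ from (10.1)--(10.5). First I would apply (10.13) with the stated value of $a$ (each of $3,\grb,k+2,k+\grb$, all of which are $\leq k+\grb\leq m$ by the running hypotheses), choosing the interpolation parameter $b$ appropriately --- for instance $b=k$ --- so that the right side reads
\begin{equation}
  N_{j+1}(a) \;\leq\; K_j(a)\{ N_j(a) + t_j^{\,k-a}(\grd_j(k)+N_j(k+2)\grd_j(0)) \}.
\end{equation}
The goal is then to show the bracketed extra term is $\leq 2N_j(a)$, so that the whole right side is $\leq 3K_j(a)N_j(a)$.

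The key step is to absorb $t_j^{k-a}\grd_j(k)$ and $t_j^{k-a}N_j(k+2)\grd_j(0)$ into $N_j(a)$. For the first: since $a\leq k+\grb = k+5/2$ we have $k-a\geq -5/2$, and the inductive assumption gives $\grd_j(k)\leq t_j^{s}\leq t_j^{2}$; hence $t_j^{k-a}\grd_j(k)\leq t_j^{\,k-a+2}\leq t_j^{-1/2}$, wait --- more carefully, $k-a+2\geq -1/2$ so this is bounded by a fixed negative power of $t_j$, which must be controlled. Here is where one uses that $t_j$ can be taken as small as we like relative to $\grr_j\grs_j$: actually the cleaner route is to note $\grd_j(k)\leq t_j^{s}$ with $s\geq2$ forces $t_j^{k-a}\grd_j(k)\leq t_j^{\,2+k-a}$, and since we may absorb any bounded power of $t_j^{-1}$ into the (already large) constant $K_j(a)$ --- no: the point of the lemma is that the constant is \emph{exactly} $3K_j(a)$. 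So instead I would use (10.8), i.e. $\grd_j(1)\leq1$ and $t_j^{-1/2}\grd_j(0)\leq\grg_1\grr_j^{7/2}\grs_j^{2n+1}$, to make $t_j^{k-a}\grd_j(k)\leq1\leq N_j(a)$ directly: since $k-a\geq-5/2$, write $t_j^{k-a}\grd_j(k)= t_j^{k-a}\grd_j(k)$ and bound $\grd_j(k)\leq t_j^{s}\leq t_j^{\,a-k}$ whenever $s\geq a-k$; because $a-k\leq 5/2\leq s$ when... here one needs $s$ chosen with $s\geq 5/2$, consistent with $s\geq2$ only if we additionally know $a-k\leq 2$, i.e. $\grb\leq2$, which is false. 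Thus the honest argument must instead take $b=a$ itself when $b\leq m$ (legal since $a\leq k+\grb\leq m$), killing the $t_j^{b-a}=1$ factor, giving $N_{j+1}(a)\leq K_j(a)\{N_j(a)+\grd_j(a)+N_j(a+2)\grd_j(0)\}$; then bound $\grd_j(a)\leq$ (by convexity (5.3) and the two endpoint bounds $\grd_j(k)\leq t_j^s$, $\grd_j(m)\leq1$) something $\leq N_j(a)$, and bound $N_j(a+2)\grd_j(0)\leq N_j(a+2) t_j^{1/2}\grg_1\grr_j^{7/2}\grs_j^{2n+1}\leq N_j(a)$ using that $a+2\leq m+2$ and... no, $N_j(a+2)$ may genuinely exceed $N_j(a)$.

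The resolution --- and the step I expect to be the real obstacle --- is the interplay between the two inductive hypotheses: one cannot bound $N_j(a+2)$ by $N_j(a)$ a priori, so the term $t_j^{b-a}N_j(b+2)\grd_j(0)$ must be handled by choosing $b$ strictly less than $a$ so that $b+2\leq a$, i.e. $b\leq a-2$, which for $a=3,\grb,k+2,k+\grb$ forces $b\leq 1,\ \grb-2,\ k,\ k+\grb-2$ respectively. For $a=3$ take $b=1$: then $b+2=3=a$ and $t_j^{b-a}=t_j^{-2}$, so we need $t_j^{-2}(\grd_j(1)+N_j(3)\grd_j(0))\leq2N_j(3)$, which holds since $\grd_j(1)\leq1$ and $t_j^{-1/2}\grd_j(0)\leq\grg_1\grr_j^{7/2}\grs_j^{2n+1}$ give, after shrinking $t_0$, $t_j^{-2}\grd_j(1)\leq$ --- but $t_j^{-2}\grd_j(1)$ with only $\grd_j(1)\leq1$ is not small. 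So one must instead use the \emph{stronger} bound $\grd_j(1)\leq t_j^{s'}$ for some positive power, interpolated from $\grd_j(k)\leq t_j^s$ and $\grd_j(0)$ small via (5.3) --- this is presumably where the assumption $s\geq2$ (rather than just $s>1/2$) is consumed. Concretely: I would first establish, using convexity (5.3) between index $0$ and index $k$, that $\grd_j(b)\leq c\, t_j^{\,s b/k}$ for $0\leq b\leq k$ (since $\grd_j(0)$ is much smaller than $t_j^s$), and more generally control $\grd_j(b)$ for $b\leq m$ by interpolating against $\grd_j(m)\leq1$; then $t_j^{b-a}\grd_j(b)\leq c\,t_j^{\,b-a+sb/k}$ and with $s\geq2$ this exponent is $\geq$ something one makes nonnegative by the choice $b=a-2\leq k$. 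The clean statement of the lemma hides all of this: the proof reduces to (i) invoke (10.13) with $b=a-2$; (ii) check $b\geq1$ or handle $b<1$ by the companion estimate (10.14); (iii) use $\grd_j(0)\leq t_j^{1/2}\grg_1\grr_j^{7/2}\grs_j^{2n+1}$ to kill $t_j^{b-a}N_j(b+2)\grd_j(0)=t_j^{-2}N_j(a)\grd_j(0)\leq N_j(a)$ after shrinking $t_0$; (iv) use interpolation plus $t_j^{-s}\grd_j(k)\leq1$, $s\geq2$, to kill $t_j^{b-a}\grd_j(b)=t_j^{-2}\grd_j(a-2)\leq1\leq N_j(a)$; hence $N_{j+1}(a)\leq K_j(a)\cdot 3N_j(a)$. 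The main obstacle is precisely bookkeeping step (iv): verifying that $s\geq2$ is exactly enough to make $t_j^{-2}\grd_j(a-2)\leq1$ for all the relevant values $a\in\{3,\grb,k+2,k+\grb\}$, which requires the interpolated decay rate of $\grd_j$ at intermediate indices and uses that $\grd_j(0)$ is super-polynomially small in $t_j$ while $\grd_j(m)$ is merely bounded.
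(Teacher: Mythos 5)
Your proposal circles around the right kind of argument but never lands on the two observations that make the paper's proof work cleanly, and the places where you get stuck are exactly where those observations are needed.

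The first missing point is the monotonicity of the H\"older norms in the differentiability index: for $0\leq l\leq k$ one has $\grd_{j}(l)\leq\grd_{j}(k)$, so the single inductive hypothesis $t_{j}^{-s}\grd_{j}(k)\leq1$ immediately gives $\grd_{j}(l)\leq t_{j}^{s}$ for \emph{all} $l\leq k$, in particular $\grd_{j}(0)\leq t_{j}^{s}$ and $\grd_{j}(1)\leq t_{j}^{s}$ (using $k\geq1$). This is what you were groping for when you wrote that ``$t_{j}^{-2}\grd_{j}(1)$ with only $\grd_{j}(1)\leq1$ is not small'' and then proposed an interpolation workaround. No interpolation is needed and no shrinking of $t_{0}$ is needed: from $\grd_{j}(l)\leq t_{j}^{s}$, $s\geq2$, $0<t_{j}<1$, you get $t_{j}^{-2}\grd_{j}(l)\leq t_{j}^{s-2}\leq1$, which is already clean. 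With this in hand, (10.12) applied with $b=1$ for $a=3$ and with $b=k$ for $a=k+2$ gives
\begin{equation}
  N_{j+1}(a)\;\leq\;K_{j}(a)\bigl\{N_{j}(a)+t_{j}^{-2}\bigl(\grd_{j}(b)+N_{j}(b+2)\grd_{j}(0)\bigr)\bigr\}
  \;\leq\;K_{j}(a)N_{j}(a)\{1+2t_{j}^{s-2}\}\;\leq\;3K_{j}(a)N_{j}(a),
\end{equation}
since $b+2=a$ in both cases.

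The second missing point is that for $a=\grb=5/2$ and $a=k+\grb$ the right tool is (10.13), not (10.12). You correctly observed that forcing $b+2\leq a$ in (10.12) would require $b=a-2$, which is not an integer when $a\in\{\grb,k+\grb\}$, and then fell back on interpolation to control $\grd_{j}(a-2)$. The paper avoids all of this: (10.13) with $l=0$ for $a=\grb$ and $l=k$ for $a=k+\grb$ produces the exponent $t_{j}^{l+(1/2)-a}=t_{j}^{-2}$ and the factor $N_{j}(l+\grb)=N_{j}(a)$ exactly (the auxiliary factor $N_{j}(\grb)$ is absorbed by $N_{j}(\grb)\leq N_{j}(k+\grb)$ when $a=k+\grb$). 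Combined again with $\grd_{j}(l),\grd_{j}(0)\leq t_{j}^{s}$ and $t_{j}^{s-2}\leq1$, this gives the factor $\{1+2t_{j}^{s-2}\}\leq3$. Your final four-step outline also has some index confusion: you cite ``(10.13) with $b=a-2$'' (that estimate's parameter is $l$, not $b$) and you propose to ``handle $b<1$ by the companion estimate (10.14)'', but (10.14) is the recursion for $\grd_{j+1}(k)$, not $N_{j+1}(a)$, and is irrelevant here. The interpolation machinery you set up in step (iv) is unnecessary; the argument closes with plain monotonicity, the choice of (10.12) versus (10.13) according to whether $a$ is an integer or a half-integer, and the inequality $t_{j}^{s-2}\leq1$ from $s\geq2$.
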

\begin{proof}
  For the case $a=3$, we use (10.12) with $a=3$ and $b=1$, and (10.7).
This gives
$ N_{j+1}(3) \; \leq \; K_{j}(3)N_{j}(3)\{1+2t_{j}^{s-2}\}$, hence the
estimate. For $a=k+2$, we use (10.12) with $b=k$. For $a=\grb$ or
$a=k+\grb$, we use (10.13) with $l=0$, or $l=k$.
\end{proof}

Now we assume that (10.6), (10.7) hold for all $j\leq l$ and
verify (10.6) for $j=l+1$.
\begin{lemma}
Assume that (10.6) and (10.7) with $s\geq2$ hold for all $j\leq l$.
Then $\|h_{l+1}\|_{\grr_{l+1},2} \leq \grg_{0}$, independently of $l$, if
$t_{0}>0$ is chosen sufficiently small.
\end{lemma}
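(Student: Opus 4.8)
The plan is to telescope. Since $h_0=0$, on the final domain $D_{\grr_{l+1}}(h_{l+1})$ we have $h_{l+1}=\sum_{j=0}^{l}(h_{j+1}-h_j)$. At each step $j\le l$ the hypotheses (10.6), (10.7) (and, for $t_0$ small, also (10.8)) are in force, so Lemma 4.2 applies and yields the nesting $D_{\grr_{j+1}}(h_{j+1})\subseteq D_{\grr_j}(h_j)$; iterating, $D_{\grr_{l+1}}(h_{l+1})\subseteq D_{\grr_{j+1}}(h_{j+1})$ for every $j\le l$. Restricting a $C^{2}$-norm to a subdomain does not increase it, so
\begin{equation}
  \|h_{l+1}\|_{\grr_{l+1},2}\ \le\ \sum_{j=0}^{l}\|h_{j+1}-h_j\|_{\grr_{j+1},2},
\end{equation}
and it suffices to bound this sum by $\grg_0$, with a bound independent of $l$, after shrinking $t_0$.

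For each increment I would use (10.11) with $a=2$ and $b=1$, namely
\begin{equation}
  \|h_{j+1}-h_j\|_{\grr_{j+1},2}\ \le\ K_j(2)\bigl\{t_j^{-1/2}\grd_j(0)N_j(3)+t_j^{-1}(\grd_j(1)+N_j(3)\grd_j(0))\bigr\}.
\end{equation}
Since $k\ge1$ and $\grd_j(\cdot)$ is nondecreasing in its argument, (10.7) with $s\ge2$ gives $\grd_j(0)\le\grd_j(1)\le\grd_j(k)\le t_j^{s}$; inserting this and using $t_j<1$ reduces the right side to $3K_j(2)N_j(3)\,t_j^{\,s-1}$, with positive exponent $s-1\ge1$ — this is the only role of $s\ge2$ beyond Lemma 11.1.

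Next I would control $N_j(3)$ by iterating Lemma 11.1, legitimate at each step $j\le l$ by (10.7): as $h_0=0$ forces $N_0(3)=1$, this gives $N_j(3)\le\prod_{i=0}^{j-1}3K_i(3)$. Putting $\grr_i\ge\grr_{\infty}>0$ and $\grs_i=5^{-i}\grs_0$ into $K_i(a)=c_a(\grr_i\grs_i)^{-s(a)}$ bounds each factor $3K_i(3)$, and also $K_j(2)$, by a constant times $5^{i\,s(3)}$, resp. $5^{j\,s(2)}$, with the constants depending only on the already-fixed data. Hence $3K_j(2)N_j(3)\le B_j$, where $B_j$ is independent of $t_0$ and of $l$ and $\log B_j=O(j^{2})$, and since $t_j=t_0^{\grk^{j}}$ we arrive at
\begin{equation}
  \|h_{j+1}-h_j\|_{\grr_{j+1},2}\ \le\ B_j\,t_0^{\,(s-1)\grk^{j}}.
\end{equation}

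It remains to verify $\sum_{j\ge0}B_j\,t_0^{(s-1)\grk^{j}}\le\grg_0$ for $t_0$ small, uniformly in $l$. The bound $B_j\,t_0^{(s-1)\grk^{j}}\le\grg_0\,2^{-j-1}$ holds precisely when $\log(1/t_0)$ exceeds $\bigl((j{+}1)\log2+\log B_j+\log(1/\grg_0)\bigr)\big/\bigl((s-1)\grk^{j}\bigr)$, and since $\grk>1$ the denominator here grows exponentially in $j$ while the numerator is $O(j^{2})$, so the supremum over $j$ of this ratio is a finite number depending only on the fixed data; choosing $t_0$ below the exponential of its negative (and below the finitely many thresholds imposed earlier, e.g. in (10.5) and (10.8)) makes the sum $\le\sum_{j\ge0}\grg_0\,2^{-j-1}=\grg_0$, so $\|h_{l+1}\|_{\grr_{l+1},2}\le\grg_0$ for all $l$. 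The domain-nesting bookkeeping and the verification that $\log B_j$ is polynomial in $j$ are routine; the one substantive point — the main obstacle — is the quantitative race in the displays above, where the tower factor $t_j^{-1}$ coming from (10.11) is more than cancelled by the tower smallness $\grd_j(k)\le t_j^{s}$ with $s\ge2$ supplied by the fine hypothesis (10.7), leaving a summable $t_j^{s-1}$ that absorbs the merely $\exp(O(j^{2}))$ growth of the accumulated constants $K_j(2)N_j(3)$.
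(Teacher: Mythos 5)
Correct, and essentially the paper's argument: you telescope from $h_0=0$, apply (10.11) with $a=2$, $b=1$ together with (10.7) to reduce each increment to $3K_j(2)N_j(3)t_j^{s-1}$, invoke Lemma 11.1 to control the growth of $N_j(3)$, and let the super-exponential decay $t_j=t_0^{\grk^j}$ dominate. The only difference is in the final summation: the paper closes with a two-level ratio test (defining $P_j$, $Q_j$, showing $Q_{j+1}/Q_j\le1/2$ hence $P_j\le(1/2)^{j(j-1)/2}P_0$), whereas you make the domination explicit by estimating $\log\bigl(K_j(2)N_j(3)\bigr)=O(j^2)$ against $(s-1)\grk^j\log(1/t_0)$; this is a harmless change of bookkeeping that reaches the same conclusion.
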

\begin{proof}
For the proof we apply (10.11) with $a=2$, $b=1$, and use (10.7).
This gives
\begin{equation}
 \|h_{j+1}-h_{j}\|_{\grr_{j+1},2} \; \leq \;
 3K_{j}(2)N_{j}(3)t_{j}^{s-1} \equiv P_{j}.
\end{equation}
Since $h_{0}=0$, it suffices to show $\sum_{j=0}^{l}P_{j}\leq\grg_{0}$.
From (11.1) with $a=3$, it follows that
\begin{eqnarray}
  P_{j+1}/P_{j} & \leq & 3\hat{c}_{2}K_{j}(3)t_{j}^{(\grk-1)(s-1)}
    \equiv Q_{j}, \\
  Q_{j+1}/Q_{j} & \leq & \hat{c}_{3}t_{j}^{(\grk-1)^{2}(s-1)} .
\end{eqnarray}
Since $\grr_{0}$, $\grs_{0}$ are fixed, this we see that $Q_{0} < 1$,
and $Q_{j+1}\leq\gre Q_{j}$, say $\gre=1/2$, if $t_{0}>0$ is chosen
sufficiently small, depending on $\grk$, $s$. Then
$Q_{j}\leq\gre^{j}$, and for $j\geq1$,
$P_{j} \, \leq \, Q_{j-1}P_{j-1} \, \leq \cdots \leq
  \sqrt{\gre}^{j(j-1)}P_{0}$.
Thus
\begin{equation}
  \sum_{j=0}^{l} P_{j} \, \leq \, P_{0} +
    (\sum_{j=1}^{\infty}\sqrt{\gre}^{j(j-1)})P_{0} \,\leq \,
   P_{0}/(1-\sqrt{\gre}) .
\end{equation}
This will be less than or equal $\grg_{0}$, if $t_{0}$ is sufficiently
small, since $h_{0}=0$  and $N_{0}(3)=1$.
\end{proof}

It remains to achieve the main inductive assumption (10.7) for all $j$.
Thus we assume (10.7) holds for $j$ and verify it for $j+1$.
Since $s\geq2$, $t_{j}^{1/2}$ is largest of three terms, so  (10.14)
and (10.7) give
\begin{eqnarray}
  t^{-s}_{j+1}\grd_{j+1}(k) & \leq & a_{j} + B_{j}, \\
  a_{j}  & = & 3K_{j}(m)N_{j}(\grb)^{2}N_{j}(k+\grb)t^{(1-\grk)s +1/2}_{j},\\
  B_{j} & = & K_{j}(m)N_{j}(\grb)^{2}t^{\grm - \grk s}_{j}\grd_{j}(k+\grm) .
\end{eqnarray}
We want the two exponents of $t_{j}$ to be positive. Thus we shall choose
$\grm > \grk s$, $\gra=(1-\grk)s +1/2 > 0$, and $s=2$.  This
gives the restriction $1<\grk<5/4$. Thus we choose
\begin{equation}
    s=2 , \; \; \; \;  1 < \grk < 5/4 , \; \; \; \; \grm > \grk s .
\end{equation}

The first step is to make $a_{j}<1/2$. Using lemma 11.1
as in the proof of lemma 11.2, we get
\begin{equation}
\begin{array}{rcl}
  a_{j+1}/a_{j} & \leq & 27\hat{c}_{m}K_{j}(\grb)^{2}K_{j}(k+\grb)
     t_{j}^{(\grk-1)\gra} \equiv\tilde{a}_{j}, \\
  \tilde{a}_{j+1}/\tilde{a}_{j} & \leq &
     \hat{c}_{\grb}^{2}\hat{c}_{k+\grb} t_{j}^{(\grk-1)^{2}\gra}.
\end{array}
\end{equation}
Thus, if $t_{0}$ is sufficiently small, all $\tilde{a}_{j} < 1$,
and all $a_{j} < 1/2$. We shall use this kind of argument several
more times.

The second step is to make $B_{j} < 1/2$ for all $j$. By (10.14),
(10.7) and (10.16), and since $1+N_{j}(3)^{2}t_{j}^{s-1}\leq2$, if
$t_{0}$ is sufficiently small,  we have
\begin{eqnarray}
B_{j+1} & \leq & b_{j}B_{j} + E_{j}, \\
b_{j} & = & C_{j}t_{j}^{(\grk-1)(\grm-\grk s)}, \\
E_{j} & = & C_{j} N_{j}(\grb)^{2}
      N_{j}(k+\grm+2)t_{j}^{\grk(\grm-\grk s)+s},
\end{eqnarray}
where, for a common constant, we may take
$C_{j}=18\hat{c}_{m}K_{j}(\grb)^{2}K_{j}(m+1)^{2}$.
Since $\grm>\grk s$, both of the exponents are positive.
Since $C_{j+1}/C_{j}=\hat{C}_{0}$, we see that $b_{j}\leq 1/2$
for all $j$, if $t_{0}$ is sufficiently small.

If we can show that $E_{j} \leq 1/4$ for all $j$, then
$B_{j} < 1/2$ for all $j$. To check its possible growth, we use
(10.13) with $a=k+\grm+2$ and $l=k$.  With $\grg=\grk(\grm-\grk s)+s$,
we get
\begin{eqnarray}
  E_{j+1} & \leq & e_{j}E_{j} + f_{j} , \\
  e_{j} & = & C_{j}' t_{j}^{(\grk-1)\grg} , \\
  f_{j} & = & C_{j}''N_{j}(3)^{2}N_{j}(k+\grb)t_{j}^{\grk\grg+s-\grm-3/2},
\end{eqnarray}
where $C_{j}'=\hat{C}_{0}(3K_{j}(\grb))^{2}K_{j}(k+\grm+2)$ and
$C_{j}''= C_{j}'C_{j}$.  Clearly we can arrange $e_{j}< 1/2$ by
choosing $t_{0}$ sufficiently small. By lemma 11.1 we can make
$f_{j} < 1/8$ for all $j$, provided the exponent of $t_{j}$ is
positive. But a simple computation shows that
$\grk\grg+s-\grm-3/2 >0$, given (11.9).

Hence, if $t_{0}>0$ is sufficiently small, then $B_{j}<1/2$
for all $j$, if (11.19) holds. This proves the following.
\begin{lemma}
  Suppose that $s=2$, $1<\grk<5/4$ and $\grm>\grk s$.
Let $1\leq k\leq m$, $2\leq a$, $k\leq a$.
If $t_{0}>0$ and then $\grd_{0}(k)$ are taken sufficiently small,
(10.6) and (10.7) will hold for all $j$.
\end{lemma}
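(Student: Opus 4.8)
The plan is to prove (10.6) and (10.7) \emph{simultaneously} by induction on $j$, having first fixed the parameters in the order the statement dictates: choose $s=2$ and $\grk,\grm$ subject to (11.9); then choose $t_0>0$ small enough that (10.5) holds and that all the geometric growth arguments below go through; only then choose $\grd_0(k)$ small. Almost all of the analytic work is already packaged in the summary estimates (10.9)--(10.16) and in Lemmas 11.1--11.2, so what remains is bookkeeping of powers of $t_j$.

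The base case $j=0$ is easy: $h_0\equiv0$ and $N_0(a)\equiv1$, so (10.6) holds, and once $t_0$ has been fixed, non-isotropic dilation (Lemma 1.2) makes $\grd_0(m)$, hence $\grd_0(k)$, as small as we wish, so $t_0^{-s}\grd_0(k)\leq1$. For the inductive step assume (10.6) and (10.7) for all $j\leq l$. Then (10.6) at $j=l+1$ is precisely Lemma 11.2, so only $t_{l+1}^{-s}\grd_{l+1}(k)\leq1$ needs proof. Inserting (10.7) (which gives $\grd_j(a)\leq t_j^{s}$ for $0\leq a\leq k$) into the fine error estimate (10.14), and observing that for $s=2$ the term $t_j^{1/2}$ dominates $t_j^{-1/2}\grd_j(1)$ and $t_j^{-1}\grd_j(0)$, one reaches $t_{l+1}^{-s}\grd_{l+1}(k)\leq a_l+B_l$ with $a_j,B_j$ as in (11.7), (11.8); the content of (11.9) is exactly that the exponents of $t_j$ in $a_j$, namely $(1-\grk)s+1/2$, and in $B_j$, namely $\grm-\grk s$, are both strictly positive.

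Next I would bound $a_l$ and $B_l$ by $1/2$, in each case by the standard device: if $U_{j+1}/U_j$ is a fixed constant times a positive power of $t_j$ --- which one arranges using (10.3) to control $K_j$ and Lemma 11.1 to control the relevant $N_j$ --- then iterating the ratio once more shows $U_0<1$ and $U_{j+1}\leq U_j/2$ once $t_0$ is small, hence $U_j\to0$. For $a_j$ this directly gives $a_j<1/2$ for all $j$. For $B_j$ one instead obtains, from the coarse error estimates (10.15)--(10.16) and $1+N_j(3)^{2}t_j^{s-1}\leq2$, a recursion $B_{j+1}\leq b_jB_j+E_j$ with $b_j\leq1/2$ (using $\grm>\grk s$), so it suffices to force $E_j\leq1/4$; and $E_{j+1}\leq e_jE_j+f_j$ with $e_j<1/2$ and, by Lemma 11.1 applied to $N_j(3)$ and $N_j(k+\grb)$, $f_j<1/8$, \emph{provided} the exponent $\grk\grg+s-\grm-3/2$, with $\grg=\grk(\grm-\grk s)+s$, is positive --- a one-line check from (11.9). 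Hence $E_j\leq1/4$, so $B_j<1/2$, so $a_l+B_l<1$, and the induction closes.

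The real obstacle is not any individual estimate but the requirement that \emph{every} power of $t_j$ occurring in these four nested geometric recursions --- for $a_j$, for $b_j$ and $B_j$, for $e_j$ and $E_j$, and for $f_j$ --- be positive at once, so that finitely many shrinkings of $t_0$ kill every growth factor. Tracking which inequalities on $(s,\grk,\grm)$ this forces, and seeing that $s=2$, $1<\grk<5/4$, $\grm>\grk s$ suffices, is the delicate point. A subordinate but indispensable ingredient is that $\grr_j$ decreases to $\grr_\infty>0$ and $\grs_j$ decreases geometrically, so the constants $K_j(a)$ grow only by the controlled factor (10.3) and never interfere with the positive powers of $t_j$.
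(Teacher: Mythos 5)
Your proposal is correct and follows the paper's argument essentially verbatim: same inductive scheme, same invocation of Lemma 11.2 for (10.6), same decomposition $t_{j+1}^{-s}\grd_{j+1}(k)\leq a_j+B_j$ from the fine estimate (10.14), and the same nested geometric recursions for $a_j$, $B_j$, $E_j$, $f_j$ with the same exponent check $\grk\grg+s-\grm-3/2>0$ reducing to (11.9). You have also correctly identified the order-of-quantifiers issue (fix $s,\grk,\grm$, then $t_0$, then $\grd_0(k)$) as the real bookkeeping burden.
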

Explicitly, we require $\grm> \grk s$, $5/2 > \grk s > 2$, and
$k+\grm\leq m$.


\section{Convergence in $C^{a}$.}
\setcounter{equation}{0}
Now we assume that $k\leq a$ and the other parameters are chosen as in
the last lemma.  We want to show that our sequence of approximate
CR-embeddings, $h_{j}$, $Z_{j}$ on $D_{\grr_{j}}(h_{j})$, converges in
$C^{a}$-norm on some neighborhood of 0. By lemma 2.1 all these domains
contain the ball of radius $\sqrt{2/3}\grr_{\infty}$. By (10.11) with
$b=k$, it suffices to show that
\begin{equation}
  \sum_{j=1}^{\infty}
     K_{j}(a)\grd_{j}(k)\{ t_{j}^{-1/2}N_{j}(a+1) + 2t_{j}^{k-a}N_{j}(k+2) \}
      < \infty .
\end{equation}
Using (10.7) it suffices to show both
\begin{equation}
  \sum_{j=1}^{\infty} K_{j}(a)N_{j}(a+1)t^{s-1/2}_{j} < \infty, \; \;
  \sum_{j=1}^{\infty} K_{j}(a)N_{j}(k+2)t^{k-a+s}_{j} < \infty .
\end{equation}
We apply the ratio test to the second, then first series.
\begin{eqnarray}
  \frac{K_{j+1}(a)N_{j+1}(k+2)t^{k-a+s}_{j+1}}
       {K_{j}(a)N_{j}(k+2)t^{k-a+s}_{j}}  & \leq &
  2\hat{c}_{j}(a)K_{j}(k+2)t_{j}^{(\grk-1)(s+k-a)} , \\
  \frac{K_{j+1}(a)N_{j+1}(a+1)t^{s-1/2}_{j+1}}
       {K_{j}(a)N_{j}(a+1)t^{s-1/2}_{j}}  & \leq &
   \hat{c}_{j}(a)K_{j}(a+1) t_{j}^{(\grk-1)(s-1/2)}\{ 1 + \\ & & \nonumber
   + t^{k-a-1+s}_{j}N_{j}(k+2)/N_{j}(a+1) \}.
\end{eqnarray}
With $a\geq k+1$, the right-hand sides will go to zero, if the
exponents of $t_{j}$ are positive.  Thus we need $k+s>a$ and
$k+s-1+(\grk-1)(s-1/2)>a$, that is $k+2>a$ and $k+1+(3/2)(\grk-1)>a$.
In particular, we can take $a=k+1$.

The restriction on the integer $m$ comes from $m\geq k+\grm$,
$k\geq 1$, $\grm > \grk s$, $5/2 > \grk s > 2$. Thus we need $m > 3$
to run the argument.  We put $m=3+\gry$, $\gry>0$.  Then we need
\begin{eqnarray}
   1 & < & \grk \; < \; \mbox{min}(5/4, (\gry +3-k)/2) \; \leq \;
   \mbox{min}(5/4,1+(\gry/2)) , \\
   2 & < & \grk s \; < \; \grm \; < \; m-k.
\end{eqnarray}
For $m > 3$, we fix such a $\grk$, and $\grm$,  and run the argument.

We summarize what has been achieved thus far.  We have constructed a
sequence of $C^{\infty}$-smooth real hypersurfaces
$M_{j}:y^{n}=|z'|^{2}+h_{j}(z',x^{n})$,
and embeddings $Z_{j}=(z',x^{n}+i(|z'|^{2}+h_{j}(z',x^{n}))$ of a
neighborhood $D_{\infty}$ of $0$ in the real hyperplane $Im(z^{n})=0$,
into $\mathbf{C}^{n}$.  Our original CR structure, or complex vector
frame field $X_{\gra}$ of class $C^{m}$
has also been transplanted to $D_{\infty}$, where it is subjected to
a sequence of diffeomorphisms and frame changes to get
the sequence $X_{\gra}^{j}$ of complex vector frame fields.
We have $X_{\gra}^{j} = Y_{\gra}^{j} +
 A_{\gra}^{\;\barr{\grb}} Y_{\barr{\grb}}^{j} + B_{\gra}
 \del_{x^{n}}$, as in (1.8), (1.9),
where the $Y_{\barr{\grb}}$, given by (1.7), are the tangential CR
operators to $M_{j}$.

We have shown that $h_{j}\rightarrow h_{\infty}$ and
$Z_{j}\rightarrow Z_{\infty}$ in $C^{a}(D_{\infty})$ norm, while
$X_{\barr{\gra}}^{j}Z_{j}\rightarrow0$, rapidly.  It follows that
$Y_{\gra}^{j}\rightarrow Y_{\gra}^{\infty}$ in $C^{a-1}=C^{k}$ norm.
By (1.9) $( A_{\gra}^{\;\barr{\grb}},B_{\gra})\rightarrow0$ (rapidly).
Hence, $X_{\gra}^{j}\rightarrow X_{\gra}^{\infty} \equiv
Y_{\gra}^{\infty}$, in $C^{k}$ norm.  These limiting vector fields are
of class $C^{a-1}=C^{k}$.

We \emph{claim} that the complex
vector frame field $X_{\gra}^{\infty} \equiv Y_{\gra}^{\infty}$
spans a CR structure equivalent to our original CR structure. The
equivalence will be given by the composition $\tilde{f}_{\infty}$ of
all the maps $f_{j}$. This will be shown to converge in the appropriate
norms in the final section.  Then $Z_{\infty}\circ\tilde{f}_{\infty} $
will be the required CR embedding.  This argument will yield the  following.
\begin{prop}
Let $X_{\barr{\gra}}$, $1\leq\gra\leq n-1$, be local complex
vector fields on $\mathbf{R}^{2n-1}$, $2n-1\geq7$, which represent
a formally integrable, strongly pseudoconvex CR structure of
class $C^{m}$, $m\in\mathbf{R}$, $3<m\leq\infty$.
Then there exists a local CR embedding.  It is  of class
$C^{a}$, for all $a$,  $0\leq a < m$.
\end{prop}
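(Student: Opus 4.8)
The plan is to run the iteration $Z\mapsto Z_1$ of sections 3--4, with the parameter choices and inductive machinery of sections 10--12, starting from the normalized data produced by section 1, and then to transport the resulting limit embedding back to the original CR structure. First I would invoke Lemma 1.2: after the frame normalization of Lemma 1.1 and a non-isotropic dilation, we may assume the data are already normalized, i.e.\ $h_0=0$, an approximate holomorphic embedding $Z_0=(z',x^n+i|z'|^2)$ on the unit ball, an adapted frame $X^0_\gra$ of the form (1.8)--(1.9) with $\barr{\del}_{X^0}Z_0=O(2)$, and the error $\grd_0(m)=\|\barr{\del}_{X^0}Z_0\|_{\grr_0,m}$ as small as we wish; here it is essential that dilation gains in every $C^m$ norm, $m$ real. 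Then I would fix the parameters as in the last paragraphs of section 12: an integer $k\geq1$, a real $a$ slightly above $k$ (one may take $a=k+1$), $s=2$, and $\grk,\grm$ with $1<\grk<\min(5/4,(\gry+3-k)/2)$ and $2<\grk s<\grm<m-k$, where $m=3+\gry>3$; this pins down the admissible range of $k$. Finally I would set up the decreasing geometric sequences $\grr_j,\grs_j,t_j$ by (10.1), (10.4), shrinking $t_0>0$ a finite number of times and then taking $\grd_0(m)$ (hence $\grd_0(k)$) sufficiently small, and define inductively $F_j=-S_{t_j}P_j\barr{\del}_{X^j}Z_j$, $Z_{j+1}=\mathcal{H}(Z_j,F_j)$, and the reparametrized frame $X^{j+1}_\gra$ by the formulas of section 4, all on the nested domains $D_{\grr_j}(h_j)$.

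The heart of the proof is the inductive verification of the two main hypotheses: $\|h_j\|_{\grr_j,2}\leq\grg_0$, which keeps Lemmas 2.1 and 7.1--7.2 applicable at every stage, and $t_j^{-s}\grd_j(k)\leq1$. This is precisely Lemmas 11.1--11.3. One propagates the first using the fine estimate (10.11) for $h_{j+1}-h_j$ together with the summation of a super-exponentially decaying series (Lemma 11.2), and the second using the fine error recursion (10.14)--(10.16): $t_{j+1}^{-s}\grd_{j+1}(k)$ splits into a piece decaying like $t_j^{(1-\grk)s+1/2}$ and a piece of size $\sim t_j^{\,\grm-\grk s}\,\grd_j(k+\grm)$, and both are forced below $1/2$ using the controlled growth of the high norms $N_j(\cdot)$ from Lemma 11.1. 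Granting these hypotheses for all $j$, the convergence argument of section 12 --- the ratio test applied to the two series in (12.2), using (10.11) with $b=k$ and the hypothesis $t_j^{-s}\grd_j(k)\le1$ --- gives $\sum_j\|h_{j+1}-h_j\|_{\grr_{j+1},a}<\infty$ and similarly for $Z_{j+1}-Z_j$. Hence $h_j\to h_\infty$, $Z_j\to Z_\infty$ in $C^a$ on the fixed ball $B(\sqrt{2/3}\,\grr_\infty)$, while $\grd_j(k)\to0$ rapidly; consequently $X^j_\gra\to X^\infty_\gra=Y^\infty_\gra$ in $C^{a-1}$, the limiting hypersurface $M_\infty:\ y^n=|z'|^2+h_\infty$ is strictly pseudoconvex and of class $C^a$, and $X^\infty_\gra$ is its tangential $(1,0)$-frame.

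It remains to return to the original structure and to reach all $a<m$. For the first, one shows the infinite composition $\tilde f_\infty=\lim_{j\to\infty}f_j\circ\cdots\circ f_0$ of the projected maps converges: since $f_j=I+f_{j(2)}$ with $\|f_{j(2)}\|$ controlled by (10.9) and decaying rapidly (because $\grd_j\to0$ and $t_j\to0$ at compatible rates), the partial compositions are Cauchy in a suitable H\"older norm on a common neighborhood of $0$, so $\tilde f_\infty$ is a local diffeomorphism conjugating $X_\gra$ to $X^\infty_\gra$; then $Z_\infty\circ\tilde f_\infty=(z^1,\dots,z^n)$ satisfies $X_{\barr{\gra}}z^j=0$ and embeds a neighborhood of $0$ as $M_\infty\subset\mathbf{C}^n$. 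For the second, one bootstraps: the iteration also produces controlled growth of the high $C^m$-norms of $h_j$ and $Z_j$, so the interpolation inequality (5.2) applied to $h_j-h_\infty$ and $Z_j-Z_\infty$ between the low norm (where convergence is rapid) and $C^m$ (where the norms are merely controlled) upgrades the convergence to $C^a$ for every $a<m$.

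I expect the main obstacle to be exactly the bookkeeping of Lemma 11.3. Because the Henkin operator $P$ followed by the smoothing $S_{t_j}$ regains only half a derivative (Lemma 7.2), not a full one, the quadratic gain of a naive Newton step is degraded; the exponents of $t_j$ in the error recursion are then tight, and it is only the simultaneous constraints $s=2$, $1<\grk<5/4$, $2<\grk s<\grm$, and $k+\grm\leq m$ (which force $m>3$) that keep every relevant power of $t_j$ positive, so that all the auxiliary ratio-test series converge. The dimension hypothesis $2n-1\geq7$ enters once, as the range condition $0<q<n-2$ needed to apply the local homotopy formula (2.13) with $q=1$.
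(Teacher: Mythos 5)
Your proposal follows essentially the same route as the paper: normalization and dilation (sections 1, Lemma 1.2), the parameter choices $s=2$, $1<\grk<5/4$, $\grk s<\grm<m-k$ and the nested domains/smoothing scales of section 10, the two inductive hypotheses propagated by Lemmas 11.1--11.3, the $C^a$ convergence of $h_j$, $Z_j$ via the ratio test in section 12, the interpolation upgrade to $a<m$ as in section 13 (for non-integer $m$, using the controlled growth of $N_j(m)$ from (10.12)), and finally the convergence of the composed maps $\tilde f_j$ in section 14 to transport the embedded structure back to the original one. Your identification of the two tight spots --- the half-derivative gain of $P\circ S_t$ driving the exponent bookkeeping, and the range condition $0<q<n-2$ in the homotopy formula requiring $2n-1\geq7$ --- matches the paper exactly.
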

The arguments of this and the final section give the result with
$0\leq a < m-\grm+1$. The next section improves the regularity of the
solutions to that stated in the proposition and theorem 0.1.


\section{Interpolation and higher regularity.}
\setcounter{equation}{0}

Now let $m_{0}\in\mathbf{R}$, $m_{0}>3$, $2\leq a=k+1\leq m_{0}-\grm+1$,
and $X_{\gra}$ of class $C^{m_{0}}$, and $h_{j},Z_{j}$ converging in
$C^{a}(D_{\infty})$ as in proposition 12.1.  Let $m\in\mathbf{Z}$ with
$m\geq m_{0}$.

We want to investigate the convergence of these same sequences also in
$C^{b}(D_{\infty})$, for
\begin{equation}
  a < b < m+ (1/2), \; \; b = \grl a + (1-\grl)(m+(1/2)),\; \;
  0 < \grl < 1 ,
\end{equation}
for any $m\geq m_{0}$ for which the original $X_{\gra}$ are of class
$C^{m}$. We shall apply the ratio test to the series
$\sum \|h_{j+1}-h_{j}\|_{b}$, where the norms  are over the domain
$D_{\infty}$.  For this we use the interpolation inequality (5.2) to get
\begin{eqnarray}
  \|h_{j+1}-h_{j}\|_{b} & \leq & c_{m}\grr_{j}^{-m}A^{\grl}_{j}B^{(1-\grl)}_{j}, \\
      A_{j} & = & \|h_{j+1}-h_{j}\|_{\grr_{j+1},a}, \\
      B_{j} & = & \|h_{j+1}-h_{j}\|_{\grr_{j+1},m+(1/2)} \\  \nonumber
           & \leq & N_{j+1}(m+(1/2)) + N_{j}(m+(1/2)).
\end{eqnarray}
We have $\grr_{j}\geq\grr_{\infty}>0$, and we know that $A_{j}$ goes
to zero rapidly.  In fact,as in (12.2), taking $a=k+1$ and $b=k$
in (10.11) and using (10.7) and lemma 11.1 gives (increasing $A_{j}$)
\begin{eqnarray}
   A_{j} & = & 2K_{j}(k+1)N_{j}(k+2)t_{j}^{s-1}, \\
   A_{j+1}/A_{j} & \leq & 3\hat{c}_{k+1}K_{j}(k+2)t_{j}^{(\grk-1)(s-1)}.
\end{eqnarray}
We must control the possible growth of $B_{j}$.

Taking $a=m+(1/2)$, then $a=m+\grb$, and $l=m$ in (10.13) and using
(10.7) gives
\begin{eqnarray}
  N_{j+1}(m+(1/2)) & \leq & K_{j}(m+(1/2)) \{N_{j}(m+(1/2)) + \\ &  &
     + N_{j}(\grb)\grd_{j}(m) + t_{j}^{s}N_{j}(m+\grb) \}. \nonumber  \\
  N_{j+1}(m+\grb) & \leq & K_{j}(m+\grb) \{N_{j}(m+\grb) + \\ &  &
   + t_{j}^{-2}[N_{j}(\grb)\grd_{j}(m)+ t_{j}^{s}N_{j}(m+\grb)] \}. \nonumber
\end{eqnarray}
By increasing $B_{j}$, we may write
\begin{eqnarray}
   B_{j} & = & E_{j} + F_{j} + G_{j} , \\
   E_{j} & = & 2 K_{j}(m+(1/2))N_{j}(m+(1/2)) , \\
   F_{j} & = & K_{j}(m+(1/2))N_{j}(\grb)\grd_{j}(m) , \\
   G_{j} & = & K_{j}(m+(1/2))N_{j}(m+\grb)t_{j}^{s} .
\end{eqnarray}
We next derive the following growth estimates.  The first follows directly
from (13.7) and the definitions.
\begin{eqnarray}
  E_{j+1} & \leq & 2\hat{c}_{m+(1/2)}K_{j}(m+(1/2)) B_{j} , \\
  F_{j+1} & \leq &
       12\hat{c}_{m+(1/2)}K_{j}(\grb)K_{j}(m+1)N_{j}(\grb)B_{j}. \\
  G_{j+1} & \leq & 2\hat{c}_{m+(1/2)}K_{j}(m+\grb)
     t_{j}^{2(\grk-1)} B_{j}.
\end{eqnarray}
For the third we use (13.8), $s=2$, and  $\grk s-2\geq0$.
For the second we use (10.16), (10.7), and
$1+N_{j}(3)^{2}t_{j}^{s-1}\leq2$, as arranged in section 11.  We get
\begin{equation}
  \grd_{j+1}(m) \; \leq \;2 K_{j}(m+1) \{ \grd_{j}(m) +
     N_{j}(m+2)t_{j}^{s} \} .
\end{equation}
From this we get
\begin{equation}
 F_{j+1} \; \leq \; 6\hat{c}_{m+(1/2)}K_{j}(\grb)K_{j}(m+1)
    (F_{j} + N_{j}(\grb)G_{j}),
\end{equation}
which gives (13.14).  Combining gives
\begin{equation}
  B_{j+1} \;  \leq \;
  16\hat{c}_{m+(1/2)}K_{j}(m+\grb)K_{j}(\grb)N_{j}(\grb)B_{j}.
\end{equation}

From (13.6) and (13.18) we see that
\begin{equation}
(A_{j+1}/A_{j})^{\grl}(B_{j+1}/B_{j})^{1-\grl}\, \leq \,
                         \tilde{C}_{j}t_{j}^{\gra\grl},
\end{equation}
where $\gra>0$,and $\tilde{C}_{j}$ is slowly growing. Since $0<\grl$, (13.19)
tends to zero, as $j\rightarrow \infty$.
Hence, we have convergence in $C^{b}(D_{\infty})$.

Thus, if our original vector fields $X_{\gra}$ are of class $C^{m}$,
$\infty>m\geq m_{0}>3$, then the limiting real hypersurface
$M_{\infty} : \; \;  y^{n} = |z'|^{2} + h_{\infty}(z',x^{n})$,
and CR embedding
 $ Z_{\infty} = (z',x^{n} + i(|z'|^{2} + h_{\infty}(z',x^{n}))),$
are of class $C^{b}$ for all $b < m+1/2$. In the $C^{\infty}$ case,
we may apply the same argument, with perhaps different constants
appearing in the estimates, for each $m>m_{0}$, to the fixed
sequence. Thus, $M_{\infty}$ and $Z_{\infty}$ are class $C^{\infty}$.
Combined with  proposition 12.1, this gives theorem 0.1.

In case $m\in\mathbf{R}$, $m > 3$, is not an integer, an entirely
similar but simpler argument, using the estimate for $N_{j}(m)$
gotten from (10.12) with $a=b=m$ gives that the embedding of
proposition 12.1 is of class $C^{a}$, $0\leq a < m$.


\section{Composition of mappings.}
\setcounter{equation}{0}

In this section we show that the sequence of compositions of the
maps $f_{j}$ actually converges on some neighborhood of $0$ in
$\mathbf{R}^{2n-1}$.  The limiting map will provide a CR-equivalence of
our original structure $X_{\gra}$ and the embedded structure
$X_{\gra}^{\infty}$ of section 12. With it we shall get  the
solutions $z^{j}$ of (0.1) for the original vector fields $X_{\barr{\gra}}$.
The argument can be motivated by a similar but much simpler one given
in [22].

We consider the sequences of compositions of mappings,
\begin{equation}
\begin{array}{c}
  \tilde{f_{j}} = f_{j}\circ f_{j-1}\circ\cdots\circ f_{0} :
\tilde{U}_{j}\rightarrow \mathbf{R}^{2n-1} , \\
\tilde{g_{j}} = g_{0}\circ g_{1}\circ\cdots\circ g_{j} :
D_{\grr_{j+1}}\rightarrow D_{\grr_{0}},
\end{array}
\end{equation}
where $\tilde{U}_{j}=\tilde{g_{j}}(D_{\grr_{j+1}})$.  The domains
$\tilde{U}_{j}\subseteq U_{j}\subset D_{\tilde{\grr}_{j}(2)}$ (see lemma 4.1
and (8.1)) are decreasing, compact, smoothly bounded neighborhoods of $0$.
The domains $D_{\grr_{j}}$ are decreasing, compact, strictly convex,
and $D_{\grr_{j}}\supseteq B(\sqrt{2/3}\grr_{j})$, and
$D_{\infty}=\cap D_{\grr_{j}}\supseteq B(\sqrt{2/3}\grr_{\infty})$ is
a compact convex neighborhood of $0$.

To investigate the convergence of the sequences $\tilde{f}_{j}$,
$\tilde{g}_{j}$, we let $d$ denote Jacobian matrix, and $|\cdot|_{U}$
denote the sup of the matrix operator norm over the set $U$.  Then,
since $\tilde{g}_{j} = \tilde{g}_{j-1}\circ g_{j}$,

\begin{eqnarray}
  d\tilde{g}_{j} & = & (d\tilde{g}_{j-1}\circ g_{j})dg_{j} , \\
 |d\tilde{g}_{j}|_{\grr_{j+1}} & \leq & |d\tilde{g}_{j-1}|_{\grr_{j}}
    |dg_{j}|_{\grr_{j+1}} \leq \cdots \leq
      \prod _{i=0}^{j} ( 1 + |dg_{i(2)}|_{\grr_{i+1}} ).
\end{eqnarray}

The infinite product will converge iff
$\sum_{j=0}^{\infty}|dg_{j(2)}|_{\grr_{j+1}} \, < \, \infty$, which will
follow from
$\sum_{j=0}^{\infty}\|g_{j(2)}\|_{\grr_{j+1},k} \, < \, \infty$, since
$k\geq1$.
But this  follows from (10.10), and (10.9) with $a=b=k$, and (10.7), which give
\begin{equation}
   \|g_{j(2)}\|_{\grr_{j+1},k} \, \leq \, K_{j}(k)t_{j}^{s-1/2}N_{j}(k+2) ,
\end{equation}
and lemma 11.1, and the ratio test.  In particular, we have a uniform
bound, $|d\tilde{g}_{j}|_{\grr_{j+1}} \leq C_{1}$ for all $j$.

As in the proof of lemma 4.2,
\begin{eqnarray}
  \tilde{g}_{j}(x) - \tilde{g}_{j-1}(x) & = & \int_{t=0}^{1}
   d\tilde{g}_{j-1}(g_{jt}(x))[g_{j(2)}] , \\
  \|\tilde{g}_{j} - \tilde{g}_{j-1}\|_{\grr_{j+1},0} & \leq &
    C_{1}\|g_{j(2)}\|_{\grr_{j+1},0}.
\end{eqnarray}
It follows that $\tilde{g}_{j}$ will converge uniformly on $D_{\infty}$ to
a Lipschitz continuous mapping $\tilde{g}_{\infty}$.  We further estimate
$\tilde{g}_{j} = \tilde{g}_{j-1}\circ g_{j}$ on $D_{\infty}$,   using
the chain-rule estimate (5.5).  Combining some constants gives
\begin{equation}
   \|\tilde{g}_{j}\|_{k} \; \leq \; K_{j}(k)(\|\tilde{g}_{j-1}\|_{k} +
  C_{1}\|g_{j(2)}\|_{k}).
\end{equation}

We use this with $k=2$ to estimate the 1-norm of (14.5).  We use
the product-rule estimate (5.4) and the chain-rule estimate (5.5) for
$\tilde{g}_{j} = \tilde{g}_{j-1}\circ g_{j}$, as in the proof of lemma
(5.2).  On  $D_{\infty}$ we get
\begin{eqnarray}
  \|\tilde{g}_{j} - \tilde{g}_{j-1}\|_{1} & \leq & K_{j}(1)
   ( \|g_{j(2)}\|_{0}\max_{t}\|d\tilde{g}_{j-1}\circ g_{jt}\|_{1} +
    C_{1}\|g_{j(2)}\|_{1} ) , \\
  \|d\tilde{g}_{j-1}\circ g_{jt}\|_{1} & \leq &  K_{j}(1) (
   \|\tilde{g}_{j-1}\|_{2} + C_{1}\|g_{j(2)}\|_{1}), \\
  \|\tilde{g}_{j}\|_{2} & \leq & K_{j}(2) ( \|\tilde{g}_{j-1}\|_{2} +
   C_{1}\|g_{j(2)}\|_{2}).
\end{eqnarray}
Using (10.10), (10.9), and (10.7) gives
\begin{equation}
  \|g_{j(2)}\|_{2} \; \leq \; K_{j}(2) ( t_{j}^{s-(1/2)}N_{j}(2) +
   t_{j}^{k+s-2}N_{j}(k+2) ) .
\end{equation}
Since $k\geq1$ and $s=2$ the exponents are positive. If we combine
all the above, use lemma 11.1 and the ratio test, we see that
$\sum \|\tilde{g}_{j} - \tilde{g}_{j-1}\|_{1} \, < \infty$.  Hence,
$\tilde{g}_{j}\rightarrow\tilde{g}_{\infty}$ in $C^{1}(D_{\infty})$,
and we must show that it has a $C^{1}$ inverse.

From $\tilde{f}_{j}=f_{j}\circ \tilde{f}_{j-1}$, we get as above,
\begin{equation}
  |d\tilde{f}_{j}|_{\tilde{U}_{j}} \, \leq \, \prod_{i=0}^{j} ( 1 +
  |df_{i(2)}|_{\grr_{i}} ) \, \leq \, C_{1},
\end{equation}
and so $C_{1}^{-1} \leq |d\tilde{g}_{j}|_{\grr_{j+1}}  \leq C_{1}$, for all
$j$.  Thus $\tilde{g}_{\infty}$ is a $C^{1}$-diffeomorphism of $D_{\infty}$ onto
a neighborhood $\tilde{g}_{\infty}(D_{\infty})$ of $0$.

We continue the estimation of $\tilde{f}_{j}$ on a convex subdomain
$\tilde{U}_{\infty}$ of $\tilde{g}_{\infty}(D_{\infty})$ containing 0.
In the next two estimates we use the chain-rule estimate (5.5) and the
triangle inequality.  In the third we have gone back to (5.8) in order
to utilize (8.4) and (8.5).
\begin{eqnarray}
  \tilde{f}_{j} -  \tilde{f}_{j-1} & = &  f_{j(2)}\circ \tilde{f}_{j-1} , \\
 \|\tilde{f}_{j} -  \tilde{f}_{j-1}\|_{\tilde{U}_{\infty},a}  & \leq &
  K_{j}(a) ( \|f_{j(2)}\|_{\tilde{\grr}_{j}(2),a} +
  \|f_{j(2)}\|_{\tilde{\grr}_{j}(2),1}
  \|\tilde{f}_{j-1}\|_{\tilde{U}_{\infty},a}), \\
 \|\tilde{f}_{j}\|_{\tilde{U}_{\infty},a}   & \leq &
     K_{j}(a)(\|f_{j(2)}\|_{\tilde{\grr}_{j}(2),a} +
      2\|\tilde{f}_{j-1}\|_{\tilde{U}_{\infty},a}), \\
 \|f_{j(2)}\|_{\tilde{\grr}_{j}(2),a} & \leq &  K_{j}(a) (
      \|F_{j}\|_{\tilde{\grr}(2),a} +  N_{j}(3)\grd_{j}(1)N_{j}(a) ) .
\end{eqnarray}

In the case $m\in\mathbf{Z}$, we take $k<a<m+1/2$,
$a=\grl k + (1-\grl)(m+1/2)$, $0<\grl<1$, and we want to show
$\sum\|\tilde{f}_{j} -  \tilde{f}_{j-1}\|_{\tilde{U}_{\infty},a} <\infty$.
Combining (14.12) and (14.14), using (5.2), and (10.9) with $a=b=1$,
and (10.7), we are reduced to showing the following.
\begin{equation}
  \sum K_{j}(a)\|F_{j}\|_{\tilde{\grr}_{j}(2),a} \, \leq \, \sum K_{j}(a)
 (\|F_{j}\|_{\tilde{\grr}_{j}(2),k})^{\grl}
     (\|F_{j}\|_{\tilde{\grr}_{j}(2),m+1/2})^{1-\grl}  < \infty ,
\end{equation}
\begin{equation}
   \sum K_{j}(a)N_{j}(3)\grd_{j}(1)N_{j}(m+1/2) \, < \, \infty ,
\end{equation}
\begin{eqnarray}
   \sum K_{j}(a)\|f_{j(2)}\|_{\tilde{\grr}_{j}(2),1}
          \|\tilde{f}_{j-1}\|_{\tilde{U}_{\infty},m+1/2}
                       & \leq &      \\
     \sum K_{j}(a)N_{j}(3)t^{s-(1/2)}_{j}
        (1 + \|\tilde{f}_{j-1}\|_{\tilde{U}_{\infty},m+1/2}) & < &
    \,  \, \infty. \nonumber
\end{eqnarray}
The finiteness of the first two sums follows as in the last two
sections.
In fact (8.5) with $a=m+(1/2)$ and $l=m$, and (10.7) give
\begin{equation}
   \|F_{j}\|_{\tilde{\grr}_{j}(2),m+(1/2)} \; \leq \; K_{j}(m+(1/2))
  ( N_{j}(\grb)\grd_{j}(m) + N_{j}(m+\grb)t_{j}^{s} ) .
\end{equation}
Thus $\|F_{j}\|_{\tilde{\grr}_{j}(2),m+(1/2)}$ and $N_{j}(m+1/2)$ are
bounded by $B_{j}$, which has the growth (13.18).  We also see that
$\|F_{j}\|_{\tilde{\grr}_{j}(2),k}$ and $\grd_{j}(1)$ are bounded by
positive powers of $t_{j}$ times slowly growing factors.

For the third we must check the growth of the last factor.  Using
(14.15) with $a=m+(1/2)$, we get
\begin{eqnarray}
  1 + \|\tilde{f}_{j}\|_{\tilde{U}_{\infty},m+1/2} & \leq & K_{j}(m+(1/2))
  ( 1 + \|\tilde{f}_{j-1}\|_{\tilde{U}_{\infty},m+1/2} + W_{j-1} ) , \\
  W_{j-1} & \equiv  & \|F_{j}\|_{\tilde{\grr}_{j}(2),m+1/2} +
  N_{j}(3)t_{j}^{s} N_{j}(m+(1/2)) ,  \\
  W_{j} & \leq & K_{j}(m+(1/2))B_{j}.
\end{eqnarray}
The last inequality follows from (8.5) with $a=m+(1/2)$, $l=m$
and (13.9). We apply the ratio test to (14.19),
\begin{eqnarray}
  \frac{ K_{j+1}(a)N_{j+1}(3)t^{s}_{j+1}(1+\|\tilde{f}_{j}\|_{m+1/2}) }
       {  K_{j}(a)N_{j}(3)t^{s}_{j}(1+\|\tilde{f}_{j-1}\|_{m+1/2})  }
      & \leq & \hat{c}_{a}3K_{j}(3)t_{j}^{(\grk-1)s}\cdot \\
   &  & \nonumber \cdot
               ( 1+ K_{j}(m+(1/2))B_{j} ) \rightarrow 0,
\end{eqnarray}
as $j\rightarrow \infty$.

It follows that the two sequences $\tilde{f}_{j}$, $\tilde{g}_{j}$
both converge in $C^{a}$-norm, for every $a < m+1/2$, on neighborhoods
of $0$ to inverse
$C^{a}$-diffeomorphisms $\tilde{f}_{\infty}$, $\tilde{g}_{\infty}$.

The mapping $\tilde{f}_{\infty}$ provides a CR equivalence between the
original structure $X_{\gra}$ and the embedded structure $X_{\infty}$.
Hence $Z_{\infty}\circ\tilde{f}_{\infty}$ is a CR embedding of class
$C^{a}$ of our original structure, for every $a < m+1/2$ if
$m\in\mathbf{Z}$. In case $m=\infty$, the preceding can be applied,
to the same fixed sequence, for every sufficiently large integer $m$
(with perhaps different constants for each $m<\infty$).  This
finishes the proof of theorem 0.1.

In the case $m\in\mathbf{R}$, the same argument works with $m+(1/2)$
replaced by $m$.  This gives the proof of proposition 12.1.


\noindent
Department of Mathematics

\noindent
University of Wisconsin

\noindent
gong@math.wisc.edu

\vspace{3ex}

\noindent
Department of Mathematics

\noindent
University of Chicago

\noindent
webster@math.uchicago.edu

\end{document}